\newcommand{\map}[1]{\xrightarrow{#1}}
\newcommand{\mil}{\varprojlim}
\newcommand{\iso}{\cong}
\newcommand{\kk}{{\bm{k}}}
\newcommand{\inv}{\mathrm{inv}}
\newcommand{\Hom}{\mathrm{Hom}}
\newcommand{\Aut}{\mathrm{Aut}}
\newcommand{\End}{\mathrm{End}}
\newcommand{\Spec}{\mathrm{Spec}}
\newcommand{\Q}{\mathbb Q}
\newcommand{\Z}{\mathbb Z}
\newcommand{\R}{\mathbb R}
\newcommand{\C}{\mathbb C}
\newcommand{\F}{\mathbb F}
\newcommand{\A}{\mathbb A}
\newcommand{\co}{\mathcal O}
\newcommand{\alg}{\mathrm{alg}}
\newcommand{\Lie}{\mathrm{Lie}}
\newcommand{\ord}{\mathrm{ord}}
\newcommand{\GL}{\mathrm{GL}}
\newcommand{\Pic}{\mathrm{Pic}}
\newcommand{\vol}{\mathrm{vol}}
\newcommand{\Hdg}{\mathrm{Hdg}}
\newcommand{\pdec}{\circ}
\newcommand{\BKK}{\mathrm{pre}}
\newcommand{\Lat}{\mathscr{L}}
\begin{document}
\author{Benjamin Howard}
\title{Arithmetic volumes of unitary Shimura curves}
\date{}

\thanks{This research was supported in part by NSF grants  DMS-1801905 and DMS-2101636.}

\address{Department of Mathematics\\Boston College\\ 140 Commonwealth Ave. \\Chestnut Hill, MA 02467, USA}
\email{howardbe@bc.edu}

\dedicatory{To Steve Kudla, on the occasion of his 70th birthday.}

\begin{abstract}
We compute the arithmetic volumes of  integral models of  unitary Shimura curves.  
This establishes  the base case of an inductive argument to compute the arithmetic volumes of unitary Shimura varieties of higher dimension, to appear in subsequent work of Bruinier and the author. 
\end{abstract}

\maketitle

\theoremstyle{plain}
\newtheorem{theorem}{Theorem}[subsection]
\newtheorem{bigtheorem}{Theorem}[section]
\newtheorem{proposition}[theorem]{Proposition}
\newtheorem{lemma}[theorem]{Lemma}
\newtheorem{corollary}[theorem]{Corollary}
\newtheorem{problem}[theorem]{Problem}

\theoremstyle{definition}
\newtheorem{definition}[theorem]{Definition}
\newtheorem{hypothesis}[theorem]{Hypothesis}

\theoremstyle{remark}
\newtheorem{remark}[theorem]{Remark}
\newtheorem{example}[theorem]{Example}
\newtheorem{question}[theorem]{Question}

\numberwithin{equation}{subsection}
\renewcommand{\thebigtheorem}{\Alph{bigtheorem}}


\section{Introduction}


We compute the arithmetic volume of the integral model of a Shimura curve of type $\mathrm{GU}(1,1)$.
This calculation provides  the base case of an inductive argument to compute the arithmetic volumes of unitary Shimura varieties of higher dimension \cite{BH}.



\subsection{Arithmetic volumes}
\label{ss:intro volume}


Suppose $\mathcal{X}$ is an arithmetic surface, by which we mean a regular  and separated Deligne-Mumford stack of dimension $2$, flat and of finite type over $\Z$.
The \emph{arithmetic Picard group}  $\widehat{\Pic}(\mathcal{X})$ is the group of isomorphism classes of hermitian line bundles 
\[
\widehat{\Omega} = (\Omega , \| \cdot\| )
\]
on $\mathcal{X}$. 
If   $\mathcal{X}$ is proper over $\Z$ we may identify
$
\widehat{\Pic}(\mathcal{X}) \iso \widehat{\mathrm{CH}}^1(\mathcal{X}),
$
where the right hand side is the codimension one arithmetic Chow group of Gillet-Soul\'e \cite{soule92} (extended to  Deligne-Mumford stacks  in   \cite{KRY}), and use the Arakelov intersection pairing
\[
\langle - , - \rangle : \widehat{\mathrm{CH}}^1(\mathcal{X}) \times \widehat{\mathrm{CH}}^1(\mathcal{X})
\to \R 
\]
from (2.5.13) of \cite{KRY} to define the \emph{arithmetic volume} 
$
\widehat{\mathrm{vol}}( \widehat{\Omega} ) = \langle \widehat{\Omega}  , \widehat{\Omega}  \rangle .
$

Of particular interest is the   \emph{metrized Hodge bundle}
\[
\widehat{\omega} ^\Hdg_{A / \mathcal{X}}  \in \widehat{\Pic}(\mathcal{X})
\]
associated to an abelian scheme  $\pi: A \to \mathcal{X}$  of relative dimension $d$. 
The underlying line bundle is $\omega^\Hdg_{A / \mathcal{X}} =  \pi_*   \Omega^d_{A / \mathcal{X}}$, 
where $ \Omega^d_{A / \mathcal{X}} = \bigwedge^d_{\co_A}  \Omega^1_{A / \mathcal{X}}$ is the determinant of the (locally free of rank $d$) sheaf of K\"ahler differentials on $A$, and the hermitian metric is defined by  the relation
 \begin{equation}\label{hodge metric}
\| s_x \|^2 =   \left|\frac{1}{ (2\pi i )^d}    \int_{A_x (\C)} s_x\wedge \overline{s}_x \right|
\end{equation}
for any vector $s_x \in  H^0( A_x , \Omega^d_{A_x/\C} )$  in the fiber at $x\in \mathcal{X}(\C)$.


\subsection{Unitary Shimura curves}


Let $\kk$ be a quadratic imaginary field of odd discriminant $\mathrm{disc}(\kk)$,
and let $W$ be a $\kk$-hermitian space of signature $(1,1)$ that contains a self-dual $\co_\kk$-lattice.
For any prime $p\mid \mathrm{disc}(\kk)$, abbreviate
\begin{equation}\label{alt p}
p^\pdec   =
 \begin{cases}
p &\mbox{if $W\otimes_\Q\Q_p$ is isotropic} \\
-p & \mbox{otherwise.}
\end{cases}
\end{equation}

To this data one can associate  an arithmetic surface 
\[
\mathcal{X}_W\to \Spec(\Z),
\]
 defined as a moduli space of principally polarized abelian surfaces with $\co_\kk$-actions (satisfying some additional constraints that need not concern us at the moment).
 We show  that there is a decomposition
\[
\mathcal{X}_W \iso \bigsqcup_{L \in \mathscr{L}_W}  \mathcal{C}_L
\]
into connected components  indexed by the set $\mathscr{L}_W$ of  isometry classes of self-dual $\co_\kk$-lattices $L\subset W$, and that 
\[
|  \mathscr{L}_W |  = \frac{ | \mathrm{CL}(\kk) | }{ 2^{o(\kk)-1} }
\]
where $\mathrm{CL}(\kk)$ is the ideal class group of $\kk$, and $o(\kk)$  is the number of prime divisors of $\mathrm{disc}(\kk)$; 
see  Propositions \ref{prop:steinitz orbits} and  \ref{prop:components}.
Each $\mathcal{C}_L \to \Spec(\Z)$ has geometrically connected fibers, 
and is proper  if  $W$ is anisotropic.

\begin{bigtheorem}\label{maintheorem}
Fix a connected component $\mathcal{C}_L \subset \mathcal{X}_W$, and let $A\to \mathcal{C}_L$ be the restriction of the universal abelian surface.
If $W$ is anisotropic then 
\[
\widehat{\mathrm{vol}}\big(  \widehat{\omega}^\Hdg_{A/ \mathcal{C}_L }   \big)  
  =  
 -     \deg_\C(  \omega^\Hdg_{A / \mathcal{C}_L}   )      
\left(
1+  \frac{2\zeta'(-1)}{\zeta(-1)} 
+ \frac{1}{2} \sum_{  p \mid \mathrm{disc}(\kk)     } \frac{ 1- p^\pdec  }{ 1+  p^\pdec } \cdot \log(p)
\right),
\]
where $\zeta(s)$ is the Riemann zeta function, and 
\[
\deg_\C( \omega^\Hdg_{A / \mathcal{C}_L}   )  
=  \frac{ 1 }{ 12  \cdot  | \co_\kk^\times| }    \prod_{p \mid \mathrm{disc}(\kk)   } (1 + p^\pdec ) 
\]
is the degree of  the Hodge bundle restricted to the  complex orbifold  $\mathcal{C}_L(\C)$.
\end{bigtheorem}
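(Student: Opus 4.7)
The plan is to reduce to the known arithmetic self-intersection of the Hodge bundle on a compact quaternionic Shimura curve, exploiting the accidental isomorphism between $\mathrm{GU}(1,1)$ and the unit group of a suitable quaternion algebra $B/\Q$. I would first identify the Shimura datum underlying $\mathcal{C}_L$ with one attached to $B^\times$, where $B$ is the indefinite quaternion algebra whose ramification is read off from the local invariants of $W$: $B$ ramifies at the archimedean place and at exactly those primes $p \mid \mathrm{disc}(\kk)$ for which $W \otimes_\Q \Q_p$ is anisotropic (equivalently, $p^\pdec = -p$). Since $W$ is anisotropic, $B$ is a division algebra and the associated Shimura curve is proper, matching the hypothesis on $\mathcal{C}_L$. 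Under this identification the universal abelian surface $A/\mathcal{C}_L$ is related, via a Serre tensor / isogeny construction of the form $A \sim \mathcal{E} \otimes_\Z \co_\kk$, to the universal false elliptic curve $\mathcal{E}$ on the quaternionic Shimura curve; the Hodge bundle $\omega^\Hdg_{A/\mathcal{C}_L}$ then admits a simple expression in terms of that of $\mathcal{E}$, with hermitian metric transforming predictably under \eqref{hodge metric}.

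For the complex degree, I would compute the hyperbolic area of the orbifold $\mathcal{C}_L(\C) \iso \Gamma \backslash \mathbb{H}$ for an arithmetic subgroup $\Gamma \subset B^\times$, using a Shimizu-type mass formula. This should yield the product $\prod_{p \mid \mathrm{disc}(\kk)}(1+p^\pdec)$ of local Euler factors, together with the prefactor $\tfrac{1}{12}$ inherited from the Euler characteristic of $X(1)$ and the denominator $|\co_\kk^\times|$ coming from the generic automorphism group of a moduli object. For the arithmetic volume, I would invoke the formula for $\langle \widehat{\omega}^\Hdg, \widehat{\omega}^\Hdg \rangle$ on a compact Shimura curve developed by Kudla-Rapoport-Yang and K\"uhn, building on Bost's calculation for $X(1)$. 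The archimedean contribution is evaluated via the Kronecker limit / Chowla-Selberg formula and produces the $2\zeta'(-1)/\zeta(-1)$ term, while the finite places of ramification of $B$ produce $\log p$ contributions.

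The main obstacle is the local analysis at primes $p \mid \mathrm{disc}(\kk)$, where the integral model of $\mathcal{C}_L$ is typically non-smooth and the comparison with the quaternionic integral model is non-\'etale. Extracting the precise coefficient $\tfrac{1}{2} \cdot \tfrac{1-p^\pdec}{1+p^\pdec}$ forces one to handle the split case ($p^\pdec = +p$) and the anisotropic case ($p^\pdec = -p$) separately: in the former the comparison should reduce to a modular-curve-type calculation, while in the latter one likely appeals to $p$-adic uniformization in the spirit of Drinfeld. A careful accounting of how the Serre tensor construction alters the integral structure of the Hodge bundle, together with the cancellation between the factor $1 + p^\pdec$ appearing in the complex degree and the local conductor-type corrections at $p$, should account for the $\tfrac{1-p^\pdec}{1+p^\pdec}$ shape of the final formula.
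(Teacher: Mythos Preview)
Your overall strategy—reduce to the Kudla–Rapoport–Yang formula on a quaternionic Shimura curve via the exceptional isomorphism—is exactly the paper's. But the mechanism of the reduction is different from what you sketch, and two of your structural guesses are off. First, $B$ is indefinite and hence \emph{unramified} at infinity; its finite ramification set consists of the primes $p\mid\mathrm{disc}(\kk)$ with $p^\pdec=-p$, and the remaining primes $p\mid\mathrm{disc}(\kk)$ (those with $p^\pdec=+p$) enter not through bad reduction of an integral-model comparison but through Eichler level: the paper works on $\mathcal{X}_B(N)$ with $N=-\mathrm{disc}(\kk)/\mathrm{disc}(B)$, and the lattice $L$ singles out a level-$N$ Eichler order $R\subset\co_B$ containing $\co_\kk$. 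Second, there is no Serre tensor construction. The universal $A\to\mathcal{C}_L$ pulls back along a finite \'etale surjection $q_L:\mathcal{X}_B(N)\to\mathcal{C}_L$ of degree $|\co_\kk^\times|$ to the \emph{intermediate} abelian surface $A_R$ sitting in a factorization $A_0\to A_R\to A_1$ of the universal degree-$N^2$ isogeny; one merely restricts the $R$-action on $A_R$ to $\co_\kk\subset R$.

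You also mislocate the hard step. The morphism $q_L$ is \'etale everywhere, including at primes dividing $\mathrm{disc}(\kk)$, so no delicate comparison of singular integral models and no $p$-adic uniformization is needed; the projection formula then reduces the theorem to computing $\widehat{\vol}(\widehat{\omega}^\Hdg_{A_R/\mathcal{X}_B(N)})$. The real work is passing from the known KRY volume of $\widehat{\omega}^\Hdg_{A_0}$ to that of $\widehat{\omega}^\Hdg_{A_R}$. The paper shows $\widehat{\omega}^\Hdg_{A_0}-\widehat{\omega}^\Hdg_{A_R}=\sum_{p\mid N}\widehat{\mathcal{F}}_p$ in $\widehat{\Pic}(\mathcal{X}_B(N))$, a sum over components of the Katz–Mazur fibers; the cross-terms $\widehat{\deg}(\widehat{\mathcal{F}}_p\cdot\widehat{\omega}^\Hdg_{A_R})$ vanish by an Atkin–Lehner symmetry argument (using that $\tau^*A_R$ is prime-to-$N$ isogenous to $A_R$), and $\widehat{\vol}(\widehat{\mathcal{F}}_p)$ is evaluated via the Eichler–Deuring mass formula for the supersingular locus. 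This is the source of the $\tfrac{1-p}{1+p}\log p$ terms at $p\mid N$, while the $\tfrac{1+p}{1-p}\log p$ terms at $p\mid\mathrm{disc}(B)$ come straight from KRY.
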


Theorem \ref{maintheorem} is stated in the text as Theorem \ref{thm:unitary volume}.
When $W$ is isotropic there is a similar statement, but one must use the Burgos-Kramer-K\"uhn \cite{BKK} extension of  the Gillet-Soul\'e theory on a compactification of $\mathcal{C}_L$.   See Theorem \ref{thm:compactified unitary volume}.


\subsection{Outline of the proof}


We will prove Theorem \ref{maintheorem} by reducing it to the analogous result for compact quaternionic Shimura curves proved by Kudla-Rapoport-Yang \cite{KRY}, and for modular curves proved by Bost and K\"uhn \cite{kuhn}.

Suppose $\co_B \subset B$ is a maximal order in an indefinite quaternion algebra over $\Q$, and $N$ is a squarefree integer prime to $\mathrm{disc}(B)$.
In \S \ref{s:quaternionic curves} we recall the quaternionic Shimura curve
$\mathcal{X}_B(N)$ parametrizing triples $(A_0,A_1,f)$ consisting of abelian surfaces $A_0$ and $A_1$ with $\co_B$-actions, together with an $\co_B$-linear isogeny $f : A_0 \to A_1$ of degree $N^2$.   
To each level $N$ Eichler order $R \subset \co_B$ we associate an abelian surface with $R$-action
\[
A_R \to \mathcal{X}_B(N)
\]
in such a way that the universal $f:A_0 \to A_1$ factors as a composition
\[
A_0 \to A_R \to A_1
\]
of $R$-linear isogenies of degree $N$.  We call  $A_R$ the \emph{intermediate abelian surface} determined by the Eichler order $R$.
The main result of  \S \ref{s:quaternionic curves}   is  Theorem \ref{thm:intermediate volume}, which computes 
the arithmetic volume of   the metrized Hodge bundle  of $A_R$ by comparing it to that of $A_0$, which is known by the works cited above.  

In \S \ref{s:hermitian} we explain how the hermitian space $W$ of signature $(1,1)$ determines an indefinite quaternion algebra $B$ with an embedding $\kk \to B$, and  how the self-dual $\co_\kk$-lattice $L\subset W$ determines an Eichler order $R\subset B$ of level   
\[
N=-\mathrm{disc}(\kk) / \mathrm{disc}(B).
\]

In    \S \ref{s:unitary curves} we construct   a finite \'etale surjection 
$
q_L : \mathcal{X}_B(N) \to \mathcal{C}_L
$
of degree $| \co_\kk^\times|$ in such a way that the pullback of the universal  $A \to \mathcal{C}_L$ is 
precisely the intermediate abelian surface $A_R$ of the Eichler order determined by $L$.

As explained at the beginning of \S \ref{s:final volumes},  Theorem \ref{maintheorem} follows from the above constructions.
 The remainder of \S \ref{s:final volumes} is devoted to extending that theorem to the case of isotropic $W$.

\subsection{Acknowledgements}

The author thanks the anonymous referee for helpful comments and suggestions.


\section{Quaternionic Shimura curves}
\label{s:quaternionic curves}


Over a quaternionic Shimura curve with level structure there is a  universal isogeny $f:A_0\to A_1$ of abelian surfaces, each equipped  with an action of a fixed maximal order $\co_B$ in an indefinite quaternion algebra.
The main result of this section is the computation of the arithmetic volume of the metrized Hodge bundle of a third abelian surface, lying between $A_0$ and $A_1$,  and depending on a choice of Eichler order in $\co_B$.


\subsection{The moduli problem}
\label{ss:drinfeld moduli}


By a  \emph{quaternion algebra} we mean a central simple $\Q$-algebra  of dimension $4$.  
Any quaternion algebra  $B$ admits a $\Q$-basis $\{1,i,j, ij\}$ with $i^2=a$, $j^2=b$, and $ij = -ji$, for some $a,b\in \Q^\times$.
We write
\[
B \iso  \left(\frac{ a,b} {\Q } \right) 
\]
to indicate the existence of such a basis.
For any place $p\le \infty $ of $\Q$ the \emph{local invariant} is defined by
\[
\inv_p(B) = ( a,b )_p,
\]
where the right hand side is the Hilbert symbol.  
Two quaternion algebras are isomorphic if and only of their local invariants agree at all places.  
As usual,  $B$ is \emph{indefinite} if  $\inv_\infty(B) = 1$.

%
%
%

Fix  an indefinite quaternion algebra $B$ and a maximal order $\co_B \subset B$.       
The maximal order is unique up to $B^\times$-conjugacy, and so the particular choice is unimportant.
Let $b\mapsto \overline{b}$ denote the main involution on $B$, and denote by 
$\mathrm{Trd}(b) = b+\overline{b}$ and  $\mathrm{Nrd}(b) = b\overline{b}$ the reduced trace and reduced norm.

\begin{definition}\label{def:OBsurface}
 An  \emph{$\co_B$-abelian surface} over a $\Z$-scheme $S$ is an abelian scheme $A \to S$ of relative dimension two, together with a ring homomorphism $\co_B \to \End(A)$ satisfying Drinfeld's determinant condition:
every  $b\in \co_B$ acts on the locally free $\co_S$-module $\Lie(A)$ with characteristic polynomial 
\[
x^2- (b+ \overline{b}) x  + b \overline{b} \in \Z [x],
\]
viewed as a polynomial in $\co_S[x]$.  
 \end{definition}
 
\begin{remark}
The determinant condition stated here is different in appearance from  Drinfeld's \emph{special} condition, as defined  in  Remark III.3.3 of \cite{boutot-carayol}. The equivalence of the two conditions can be easily proved using Proposition 2.1.3 of \cite{hartwig}.
\end{remark}

Fix  a squarefree integer $N>0$ prime to $\mathrm{disc}(B)$.
 Let $\mathcal{X}_B(N)$  be the Deligne-Mumford stack whose functor of points assigns to a $\Z$-scheme $S$ the groupoid of 
 tuples $(A_0,A_1, f)$ in which  $A_0$ and $A_1$ are $\co_B$-abelian surfaces over $S$, and 
$
f : A_0 \to A_1
$
 is an $\co_B$-linear isogeny of degree $N^2$ whose kernel is contained in the $N$-torsion subgroup scheme $A_0[N] \subset A_0$.

It is known that $\mathcal{X}_B(N)$ is an arithmetic surface, in the sense of \S \ref{ss:intro volume}, with geometrically connected fibers.    It is smooth outside  characteristics dividing $N \mathrm{disc}(B)$. 
 If $B$ is a division algebra then $\mathcal{X}_B(N)$ is proper over $\Z$. 
 For all of this, see   \cite{boutot-carayol} and  \cite{buzzard}.

\begin{definition}\label{def:AL}
As in the classical  theory of modular curves,  the \emph{Atkin-Lehner involution} 
\[
 \tau : \mathcal{X}_B(N) \to \mathcal{X}_B(N)
\]
is the  automorphism  sending $f:A_0 \to A_1$ to the isogeny $f^\vee : A_1 \to A_0$  characterized by 
$f^\vee \circ f =[N]$ and $f\circ f^\vee=[N]$.
\end{definition}

From now on we let
$
f: A_0 \to  A_1 
$
 denote the universal object over $\mathcal{X}_B(N)$, and fix a level $N$ Eichler order
 $R\subset \co_B$.  The finite flat group scheme
$
\mathrm{ker}(f) \to \mathcal{X}_B(N)
$
has order $N^2$, and  carries an action of 
 \begin{equation}\label{matrix coordinates}
 \co_B/ N\co_B \iso M_2(\Z/N\Z).
 \end{equation}
We choose  \eqref{matrix coordinates} in such a way that $R/N \co_B$ is identified with the upper triangular matrices, and 
 define orthogonal idempotents in \eqref{matrix coordinates} by
\[
e=\left( \begin{matrix} 1&0 \\ 0 & 0 \end{matrix} \right) 
\quad \mbox{ and }\quad 
e'=\left( \begin{matrix} 0&0 \\ 0 & 1 \end{matrix} \right).
\]
One can easily check that $C_R = e \cdot \mathrm{ker}(f)$  is characterized as the unique $R$-stable finite flat subgroup scheme  of $\mathrm{ker}(f)$  of order $N$.  In particular, while it depends on the Eichler order $R\subset \co_B$, it does not depend on the isomorphism \eqref{matrix coordinates}.

\begin{definition}\label{def:intermediate}
The quotient 
 $
 A_R = A_0 /C_R
 $
   is  the \emph{intermediate abelian surface} determined by the Eichler order $R$.  
It is an abelian surface over $\mathcal{X}_B(N)$ endowed  with an action  $R\to \End(A_R)$, and with $R$-linear  isogenies
\[
A_0 \map{\phi} A_R \map{\psi} A_1 
\]
each of degree $N$.
\end{definition}

The following lemma will be needed in the proof of Theorem \ref{thm:intermediate volume}.

\begin{lemma}\label{lem:intermediate fixed point}
There is an isogeny $\tau^* A_R \to A_R$ of degree prime to $N$.
\end{lemma}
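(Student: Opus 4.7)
My plan is to construct the isogeny in two stages. First, I will exhibit an isogeny $A_R \to A_{R'}$ of degree prime to $N$, where $A_{R'}$ is an ``opposite'' intermediate abelian surface built from the orthogonal idempotent $e'$ in place of $e$. Second, I will identify $A_{R'}$ with $\tau^* A_R$ (up to an isogeny of degree prime to $N$), and then transport the first isogeny across this identification and invert it in the quasi-isogeny category. Explicitly, let $R' \subset \co_B$ be the Eichler order of level $N$ whose reduction modulo $N$ is the subring of lower-triangular matrices in $M_2(\Z/N\Z)$ under \eqref{matrix coordinates}, so that $e' \in R'$. Set $C_{R'} = e' \cdot \ker(f)$ and $A_{R'} = A_0/C_{R'}$, in complete analogy with Definition~\ref{def:intermediate}.

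For the first stage I would choose any lift $\sigma \in \co_B$ of the swap matrix $\bigl(\begin{smallmatrix} 0 & 1 \\ 1 & 0 \end{smallmatrix}\bigr) \in M_2(\Z/N\Z) \cong \co_B/N\co_B$. This matrix has determinant $-1$, so $\mathrm{Nrd}(\sigma) \equiv -1 \pmod{N}$ and in particular is prime to $N$; consequently the $\co_B$-action of $\sigma$ on $A_0$ defines an isogeny $\sigma_* : A_0 \to A_0$ of degree $\mathrm{Nrd}(\sigma)^2$ prime to $N$. Conjugation by the swap matrix exchanges the orthogonal idempotents, so $\sigma_*(C_R) = C_{R'}$, and $\sigma_*$ therefore descends to an isogeny $\bar\sigma : A_R \to A_{R'}$ of the same degree.

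The second stage, which I expect to be the main obstacle, is the identification of $A_{R'}$ with $\tau^* A_R$. Since $\tau$ replaces the universal triple $(A_0,A_1,f)$ by $(A_1,A_0,f^\vee)$, one computes $\tau^* A_R = A_1/(e\cdot \ker(f^\vee))$; both this surface and $A_{R'}$ appear as ``middle terms'' of degree-$N$ factorizations of $f^\vee$ and $f$, respectively. In the classical case $B = M_2(\Q)$, Morita equivalence reduces the identification to the obvious isomorphism $E \times (E/C) \cong (E'/C') \times E'$ given by the canonical isomorphism $E'/C' \cong E$ induced by the dual isogeny $E' \to E$ (where $E' = E/C$ and $C' = E[N]/C$). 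In the general quaternionic setting I would verify the identification either complex-analytically, by writing both surfaces as quotients of $B \otimes_{\Q} \R$ by explicit $R$-lattices built from $\ker(f)$ and $\ker(f^\vee)$ and checking that the lattices match, or by a local analysis of the $R_p$-action on $p$-divisible groups at each prime $p \mid N$. Once this identification is in hand, composing $\bar\sigma$ with it and taking any quasi-inverse produces the required isogeny $\tau^* A_R \to A_R$ of degree prime to $N$.
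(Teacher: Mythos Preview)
Your first stage is correct and cleanly argued.  The second stage, however, is where the real content lies, and your sketch does not yet supply it.  The paper's proof avoids the detour through $A_{R'}$ entirely.  It first realizes $\tau^*A_R$ as a quotient of $A_0$ (not of $A_1$): from the chain
\[
0 \subset C_R \subset C_R\oplus C_{R'} \subset eA_0[N]\oplus C_{R'} \subset A_0[N]
\]
one obtains four degree-$N$ isogenies $A_0\to A_R\to A_1\to A_0/(eA_0[N]\oplus C_{R'})\to A_0$ composing to $[N]$; since the first two compose to $f$, the last two compose to $f^\vee$, which identifies $\tau^*A_R \cong A_0/(eA_0[N]\oplus C_{R'})$.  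Then a single lift $\gamma\in\co_B$ of the \emph{nilpotent} matrix $\left(\begin{smallmatrix}0&1\\0&0\end{smallmatrix}\right)$, chosen with $\mathrm{Nrd}(\gamma)=NM$ and $\gcd(M,N)=1$, descends directly to an isogeny $\tau^*A_R\to A_R$ of degree $M^2$.

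Compared with this, your two stages do with two lifts what the paper does with one.  More importantly, the identification you defer in stage~2 is not free: the only natural map $A_{R'}\to\tau^*A_R$ (quotienting further by $eA_0[N]$) has degree $N^2$, so a genuinely different construction is required.  Checking complex-analytically that the underlying lattices are related by some element of $B^\times$ of reduced norm prime to $N$, or checking $p$-divisible groups prime by prime, would at best exhibit such an element, and writing it down explicitly amounts to lifting a rank-one element of $\co_B/N\co_B$ (such as $e'$) to $\co_B$ with reduced norm in $N\Z$ but prime to $N^2$.  That is exactly the paper's device; once you have it in hand, the passage through $A_{R'}$ and the final inversion become superfluous.
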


\begin{proof}
If we set $C_R' = e'\cdot \mathrm{ker}(f)$, there is a chain of finite flat group schemes
\[
0 \subset  C_R  \subset C_R \oplus C_R' \subset e A_0[N] \oplus C_R' \subset A_0[N]
\]
of orders $1$, $N$, $N^2$, $N^3$, and $N^4$,  respectively.
Taking the quotients of $A_0$ by these subgroups yields a 
chain of degree $N$ isogenies
\[
A_0 \to A_R \to A_1 \to  \frac{A_0}{e A_0[N] \oplus C_R'} \to A_0
\]
whose composition is $[N]$.  
The composition of the first two arrows is $f$, hence the   composition of the second two arrows is $f^\vee$.
Thus 
\[
\tau^* A_R \iso  \frac{A_0}{e A_0[N] \oplus C_R'},
\]
as both sides are characterized as the unique $R$-stable  finite flat subgroup scheme of  $\ker(f^\vee)$ of order $N$.

Recalling the fixed isomorphism \eqref{matrix coordinates}, choose any lift $\gamma \in \co_B$ of 
\[
 \begin{pmatrix} 0 & 1 \\ 0 &0   \end{pmatrix}  \in  M_2(\Z/N\Z) \iso \co_B/N \co_B
\]
of reduced norm  $\gamma \overline{\gamma} = N  M$ with $M$ an  integer coprime to $N$.
  The degree $(NM)^2$ endomorphism $\gamma \in \End(A_0)$ descends to a degree $M^2$ isogeny
\[
\tau^* A_R \iso \frac{ A_0 }{  eA_0[N] \oplus C_R' } \map{\gamma} A_0 / C_R \iso A_R. \qedhere
\]
\end{proof}

\begin{remark}\label{rem:split intermediate}
  If $B\iso M_2(\Q)$,  we may choose an isomorphism  $\co_B\iso M_2(\Z)$ in such a way that 
  \[
R \iso \left\{  \left( \begin{matrix}  a & b \\ Nc & d \end{matrix} \right)  : a,b,c,d \in \Z  \right\}.
\]
 Such a choice determines an isomorphism  from the   moduli space of elliptic curves with $\Gamma_0(N)$-structure to  $\mathcal{X}_B(N)$, under which  $f:A_0 \to A_1$ is identified with the square
\begin{equation}\label{elliptic isogeny}
\varphi \times \varphi   : E_0 \times E_0 \to E_1\times E_1, 
\end{equation}
of the universal cyclic $N$-isogeny elliptic curves.  Under this identification, 
\[
C_R = \ker(\varphi) \times \{0\},
\]
the intermediate abelian surface is  $A_R=E_1 \times E_0$, and $\tau^*A_R = E_0 \times E_1$. 
The isogeny 
\[
E_0 \times E_1  = \tau^*A_R \to A_R = E_1 \times E_0
\]
determined by the choice of 
\[
\gamma =  \begin{pmatrix} 0 & 1 \\ N &0   \end{pmatrix}  \in  M_2(\Z)
\]
in the proof of Lemma \ref{lem:intermediate fixed point}  is the isomorphism $(x_0,x_1)\mapsto (x_1,x_0)$.
\end{remark}


\subsection{Comparison of hermitian  line bundles}


We continue with the notation of the previous subsection.  
Our goal is to compare the metrized Hodge bundle of the universal $A_0 \to \mathcal{X}_B(N)$ with that of the intermediate abelian surface $A_R \to \mathcal{X}_B(N)$ determined by an Eichler order $R\subset \co_B$ of level $N$.

By results of Katz-Mazur \cite{katz-mazur}, extended to quaternionic Shimura curves in  \cite{buzzard}, 
 the reduction of $\mathcal{X}_B(N)$ at a prime $p\mid N$ is a union 
\[
\mathcal{X}_B(N)_{ \F_p} = \mathcal{F}_p \cup \mathcal{V}_p
\]
of two irreducible components, interchanged by the Atkin-Lehner involution of Definition \eqref{def:AL}.
Each has a natural structure of reduced closed substack, and the fiber product 
\begin{equation}\label{sslocus}
\mathcal{S}_p = \mathcal{F}_p \times_{ \mathcal{X}_B(N) } \mathcal{V}_p 
\end{equation}
is the reduced locus of supersingular points.

One distinguishes between $\mathcal{F}_p$ and $\mathcal{V}_p$ as follows: after restricting to the two components, the universal isogeny $f: A_0 \to A_1$ admits factorizations 
\begin{equation}\label{Fcomponent}
A_{ 0 / \mathcal{F}_p} \map{ \mathrm{Fr} } A^{(p)}_{0/ \mathcal{F}_p} \to A_{ 1 / \mathcal{F}_p } 
\end{equation}
and
\begin{equation}\label{Vcomponent}
A_{ 0 / \mathcal{V}_p } \to A^{(p)}_{ 1 / \mathcal{V}_p}  \map{ \mathrm{Ver} }   A_{ 1 / \mathcal{V}_p} ,
\end{equation}
where $\mathrm{Fr}$ and $\mathrm{Ver}$ are the usual Frobenius and Verschiebung morphisms, and the unlabelled arrows are isogenies of degree $(N/p)^2$.

By the regularity of $\mathcal{X}_B(N)$, the vertical Weil divisors $\mathcal{F}_p$ and $\mathcal{V}_p$  are each defined locally by a single equation, and so determine line bundles  $\co(\mathcal{F}_p)$ and $\co(\mathcal{V}_p)$ on $\mathcal{X}_B(N)$.
Noting that these line bundles are canonically trivialized in the complex fiber by the constant function $1$,  we endow each with the constant metric  $  \| 1\|  = \sqrt{p}$.  The resulting 
hermitian line bundles are denoted
\begin{equation}\label{vertical bundles}
\widehat{\mathcal{F}}_p  , \widehat{\mathcal{V}}_p  \in \widehat{\Pic}( \mathcal{X}_B(N) ).
\end{equation}
As the tensor product $\co(\mathcal{F}_p) \otimes \co(\mathcal{V}_p) \iso \co( \mathcal{X}_B(N)_{\F_p} )$ is  trivialized by the global section $p^{-1}$ of constant norm $1$, we have the relation
\begin{equation}\label{level component cancel}
\widehat{\mathcal{F}}_p + \widehat{\mathcal{V}}_p   =0
\end{equation}
in the arithmetic Picard group.

\begin{proposition}\label{prop:intermediate hodge compare}
The metrized Hodge bundles of $A_0$ and $A_R$ satisfy 
\[
 \widehat{\omega}^\Hdg_{A_0/ \mathcal{X}_B(N)}
 = \widehat{\omega}^\Hdg_{A_R/ \mathcal{X}_B(N)}
 + \sum_{p\mid N} \widehat{\mathcal{F}} _p  \in \widehat{\Pic}( \mathcal{X}_B(N) )  .
\]
\end{proposition}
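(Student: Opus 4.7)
The plan is to produce the required identity in $\widehat{\Pic}(\mathcal{X}_B(N))$ by exhibiting a distinguished global section of the ratio line bundle. Pullback of differentials along $\phi: A_0\to A_R$ gives an $\co_{\mathcal{X}_B(N)}$-linear map $\Phi : \omega^\Hdg_{A_R/\mathcal{X}_B(N)} \to \omega^\Hdg_{A_0/\mathcal{X}_B(N)}$, which we regard as a global section of the line bundle $\mathcal{L} := \omega^\Hdg_{A_0/\mathcal{X}_B(N)} \otimes (\omega^\Hdg_{A_R/\mathcal{X}_B(N)})^{-1}$. The proposition then reduces to two claims: (i) as a Cartier divisor, $\mathrm{div}(\Phi) = \sum_{p\mid N}\mathcal{F}_p$, giving an isomorphism $\mathcal{L}\iso\bigotimes_{p\mid N}\co(\mathcal{F}_p)$ that sends $\Phi$ to the canonical section $\bigotimes_p 1$; and (ii) the pointwise Hodge norm of $\Phi$ on $\mathcal{X}_B(N)(\C)$ is the constant $\sqrt N = \prod_{p\mid N}\sqrt p$, matching the norm of $\bigotimes_p 1$ on the right.

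Claim (ii) is immediate from the change-of-variables formula: for $x\in\mathcal{X}_B(N)(\C)$ and $s\in H^0((A_R)_x,\Omega^2)$, the degree-$N$ cover $\phi_x$ yields $\int_{(A_0)_x}\phi_x^* s\wedge\overline{\phi_x^* s} = N \int_{(A_R)_x} s\wedge\bar s$, so $\|\phi_x^* s\|^2 = N\|s\|^2$ and $\Phi$ has constant pointwise operator norm $\sqrt N$.

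The substantive work is in (i). Away from primes dividing $N$, $C_R$ is étale, so $\phi$ is étale and $\Phi$ is an isomorphism. For a prime $p\mid N$, the strategy is to analyze $\Phi$ at the generic points of the two components of $\mathcal{X}_B(N)_{\F_p}$. The Morita identification $\co_B/N\co_B \iso M_2(\Z/N\Z)$ combined with the factorization \eqref{Fcomponent} shows that generically on $\mathcal{F}_p$, $A_0$ is ordinary and $\ker(f)[p] = \ker(\mathrm{Fr}_{A_0}) \iso \mu_p\oplus\mu_p$ as an $M_2(\F_p)$-module, so $C_R[p] = e\cdot\ker(f)[p] \iso \mu_p$ is connected; by contrast, on $\mathcal{V}_p$ the factorization \eqref{Vcomponent} together with the fact that Verschiebung on an ordinary abelian surface is étale shows that $\ker(f)[p]$, and hence $C_R[p]$, is étale. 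Consequently $\Phi$ is an isomorphism at the generic point of $\mathcal{V}_p$ and has nontrivial cokernel at the generic point of $\mathcal{F}_p$.

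To pin down the multiplicity of $\mathcal{F}_p$ in $\mathrm{div}(\Phi)$ as exactly one, the plan is to use global bookkeeping. The identity $f^\vee\circ f = [N]$ acts on the rank-one bundle $\omega^\Hdg_{A_0}$ as multiplication by $N^2$, and combined with the Atkin-Lehner symmetry $\tau$ (which swaps $\mathcal{F}_p$ and $\mathcal{V}_p$ while interchanging $f$ and $f^\vee$) this forces the length of the cokernel of $f^*$ to equal $v_p(N^2) = 2$ at the generic point of $\mathcal{F}_p$ and $0$ at the generic point of $\mathcal{V}_p$. The factorization $f = \psi\circ\phi$ then gives $v_p(\phi^*) + v_p(\psi^*) = 2$ at the generic point of $\mathcal{F}_p$, and the identification $\ker(\psi) \iso \ker(f)/C_R$ shows that $\ker(\psi)[p] \iso \mu_p$ is also connected there, so both $v_p(\phi^*)$ and $v_p(\psi^*)$ are strictly positive, forcing each to equal $1$. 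We expect this global-to-local accounting to be the main obstacle, since it requires coordinating the $M_2(\Z/N\Z)$-action on $\ker(f)$, the Frobenius/Verschiebung dichotomy between the two components, and the scaling factor $[N]^* = N^2$ on $\omega^\Hdg$ to pin the cokernel length down to exactly one.
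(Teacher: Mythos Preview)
Your proof is correct, and the setup—using $\phi^*$ as a global section of the ratio line bundle, computing its norm as $\sqrt{N}$ by change of variables, and identifying which component supports its divisor by the \'etale/connected dichotomy of $\ker(f)[p]$ on $\mathcal{V}_p$ versus $\mathcal{F}_p$—matches the paper exactly.

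Where you diverge is in pinning down the multiplicity of $\mathcal{F}_p$ in $\mathrm{div}(\phi^*)$ as exactly one. The paper does this by a local deformation-theoretic computation: it takes an ordinary geometric point of $\mathcal{F}_p$, lifts to the Serre-Tate canonical lift over the Witt ring $W$, and computes explicitly that the induced map on Lie algebras is multiplication by $p$, so that each of $\phi^*$ and $\psi^*$ has cokernel of length one over $W$. Your argument instead uses the global identity $(f^\vee\circ f)^* = [N]^* = N^2$ on $\omega^{\Hdg}_{A_0}$, combined with the Atkin-Lehner swap and your already-established \'etaleness of $f$ on $\mathcal{V}_p^{\mathrm{ord}}$, to force $v_\pi(f^*)=2$ at the generic point of $\mathcal{F}_p$; then the factorization $f^*=\phi^*\circ\psi^*$ with both factors strictly positive (since both $\ker\phi[p]$ and $\ker\psi[p]$ are connected there) gives $1+1$. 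This is a genuinely different and arguably more elegant route: it replaces the Serre-Tate/Grothendieck-Messing machinery with a symmetry argument and elementary valuation bookkeeping. One small point to make explicit is that $v_\pi(p)=1$ at the generic point of $\mathcal{F}_p$, i.e.\ that the component is reduced in the special fiber; this is part of the Katz-Mazur/Buzzard structure theory already invoked in the paper (and is implicit in the relation $\widehat{\mathcal{F}}_p+\widehat{\mathcal{V}}_p=0$).
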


\begin{proof}
The canonical  isogeny $\phi : A_0 \to A_R$ induces a morphism 
\begin{equation}\label{hodge shift}
\phi^*  :  \omega^\Hdg_{A_R/ \mathcal{X}_B(N)} \to  \omega^\Hdg_{A_0/ \mathcal{X}_B(N)}
\end{equation}
of Hodge bundles.  If we view  this as a section 
\begin{equation}\label{hodge shift section}
\phi^* \in H^0\big( \mathcal{X}_B(N) , \omega^\Hdg_{A_0/ \mathcal{X}_B(N)} \otimes ( \omega^\Hdg_{A_R/ \mathcal{X}_B(N)})^{-1} \big), 
\end{equation}
then unpacking the definition of the metric \eqref{hodge metric} shows that
\begin{equation}\label{hodge shift infinite}
\| \phi^* \| =  \sqrt{ \deg(\phi)} = \sqrt{N} .
\end{equation}

The morphism of vector bundles $\phi : \Lie(A_0) \to \Lie(A_R)$ restricts to an isomorphism over $\mathcal{X}_B(N)_{/\Z[1/N]}$, and hence so does  \eqref{hodge shift}. From this it follows that 
\[
\mathrm{div}( \phi^*  ) = \sum_{p\mid N} a_p \mathcal{F}_p +\sum_{p\mid N} b_p \mathcal{V}_p
\]
for  nonnegative  $a_p, b_p \in \Z$.
We will show that  all $a_p=1$ and  all $b_p=0$.

Suppose 
\[
\Spec(k) \map{x} \mathcal{X}_B(N)_{ \F_p}^\ord = \mathcal{X}_B(N)_{ \F_p} \smallsetminus \mathcal{S}_p
\]
 is a geometric point of the ordinary locus.
For  $i \in \{ 0,1 \}$ the $p$-divisible group  of  $A_{i,x}$  sits in a connected-\'etale sequence
\[
0 \to H^0_{i,x} \to A_{i,x} [p^\infty] \to H^{\mathrm{et}}_{i,x} \to 0,
\]
and the connected part satisfies
\begin{equation}\label{ordinary factors}
H^0_{i,x} \iso  \mu_{p^\infty/k} \times  \mu_{p^\infty/k}.
\end{equation}
The isogeny $f : A_0 \to A_1$ induces an isogeny of $p$-divisible groups
\begin{equation}\label{connected isogeny}
f_x^0 : H^0_{0,x} \to H^0_{1,x} .
\end{equation}

If the geometric point $x$ lies in $\mathcal{V}_p^\mathrm{ord} = \mathcal{V}_p \smallsetminus \mathcal{S}_p$,
 then  \eqref{Vcomponent} implies that the  isogeny \eqref{connected isogeny} factors as
\[
H^0_{0,x} \to H^{0,(p)}_{1,x} \map{ \mathrm{Ver} }   H^0_{1,x}.
\]
The first arrow is an isomorphism because the first arrow in  \eqref{Vcomponent} has degree prime to $p$,
and the second arrow is an isomorphism by \eqref{ordinary factors}.
Hence \eqref{connected isogeny} is an isomorphism. 
As $\Lie(H^0_{i,x}) \iso \Lie(A_{i,x})$, we deduce that $f_x : A_{0,x} \to A_{1,x}$ induces an isomorphism
 $ \Lie(A_{0,x}) \to \Lie(A_{1,x})$.
This last map factors as 
  \[
  \Lie(A_{0,x})  \map{\phi_x}\Lie(  A_{R,x} )  \to  \Lie( A_{1,x}),
  \]
 and so the first arrow in the composition is also an isomorphism. 
 It follows  that the section \eqref{hodge shift section} is nowhere vanishing on $\mathcal{V}_p^\ord$, and so  $b_p=0$.

Suppose instead that the geometric point $x$ lies in $\mathcal{F}_p^\ord = \mathcal{F}_p \smallsetminus \mathcal{S}_p$.
In this case 
 \eqref{Fcomponent} implies that the  isogeny \eqref{connected isogeny} factors as
\[
H^0_{0,x}  \map{ \mathrm{Fr} }  H^{0,(p)}_{0,x}  \to   H^0_{1,x}.
\]
The second arrow is an isomorphism because the second arrow in  \eqref{Fcomponent} has degree prime to $p$,
while \eqref{ordinary factors} implies that the kernel of the first arrow is  the $p$-torsion subgroup of $H^0_{0,x}$.
Thus the isomorphisms \eqref{ordinary factors} may be chosen so that 
\begin{equation}\label{torus maps}
\mu_{p^\infty/k} \times  \mu_{p^\infty/k} \iso H^0_{0,x} \map{f_x^0} H^0_{1,x} \iso \mu_{p^\infty/k} \times  \mu_{p^\infty/k}
\end{equation}
is multiplication by $p$.  Again using $\Lie(H^0_{i,x}) \iso \Lie(A_{i,x})$, we now find that the isogeny $f:A_0 \to A_1$ induces the trivial map on Lie algebras.  Thus \eqref{hodge shift section} vanishes identically along $\mathcal{F}_p^\ord$, and so $a_p>0$.

To go further, let  $W = W(k)$ be the Witt ring.   Taking the Serre-Tate canonical lifts of $A_{0,x}$ and $A_{1,x}$  to $W$ provides us with a diagram
\[
\xymatrix{
{  \Spec(k)  }  \ar[r]^{x}   \ar[d]  & { \mathcal{X}_B(N) }    \\ 
 {   \Spec( W ) . }  \ar[ur]_{\widetilde{x}}  &  
}
\]
If we denote by $H^0_{i,\tilde{x}}$ the connected part of the $p$-divisible group $A_{i,\tilde{x}}[p^\infty]$ over $W$, the
Grothendieck-Messing deformation theory  implies that the maps in \eqref{torus maps} lift uniquely to 
\begin{equation}\label{formal coordinates}
\mu_{p^\infty/W} \times  \mu_{p^\infty/W} \iso H^0_{0,\tilde{x}} \map{f_{\tilde{x}}^0} H^0_{1, \tilde{x}} 
\iso \mu_{p^\infty/W} \times  \mu_{p^\infty/W},
\end{equation}
and this composition is still multiplication by $p$.  Taking Lie algebras, we find that 
\[
W^2 \iso  \Lie(H_{0,\tilde{x}}) \iso  \Lie(A_{0,\tilde{x}}) \map{f_{\tilde{x}}}  \Lie(A_{1,\tilde{x}}) \iso   \Lie(H_{0,\tilde{x}}) \iso W^2
\]
is multiplication by $p$, and so each map in the factorization 
\[
 \Lie(A_{0,\tilde{x}}) \map{\phi}  \Lie(A_{R,\tilde{x}})  \map{\psi} \Lie(A_{0,\tilde{x}})
\]
of $f_{\tilde{x}}$ has cokernel isomorphic to $W/pW$.
It follows that the restriction of \eqref{hodge shift} to a morphism of rank one $W$-modules
\[
\phi^* :  \omega^\Hdg_{A_{0,\tilde{x}} /W } \to  \omega^\Hdg_{A_{R,\tilde{x}} /W } 
\]
 has cokernel isomorphic to $W/pW$, and from this one deduces  $a_p=1$.

We have now shown that  \eqref{hodge shift section} has divisor $\sum_{p\mid N} \mathcal{F}_p$
and norm  \eqref{hodge shift infinite}, and the claim follows.
\end{proof}


\subsection{Arithmetic volumes}


We continue to let $R\subset \co_B$ be an Eichler order of squarefree level $N$, prime to $\mathrm{disc}(B)$, and assume that $B$ is a division algebra.  Hence $\mathcal{X}_B(N)$ is proper over $\Z$.

We now combine results of Kudla-Rapoport-Yang on the metrized Hodge bundle of $A_0 \to \mathcal{X}_B(N)$ with  Proposition  \ref{prop:intermediate hodge compare} to obtain  results on the Hodge bundle of the intermediate abelian surface of Definition \ref{def:intermediate}.

\begin{theorem}\label{thm:intermediate volume}
The Hodge bundle of  $A_R \to \mathcal{X}_B(N)$ has geometric degree 
\[
 \deg_\C(  \omega^\Hdg_{A_R/ \mathcal{X}_B(N)}   ) 
=  \frac{1}{12} \prod_{p  \mid \mathrm{disc}(B)} (1-p)   \prod_{p \mid N} (1+p)
\]
and  arithmetic volume
\begin{align*}\lefteqn{
\widehat{\mathrm{vol}}  (  \widehat{ \omega}^\Hdg_{A_R/ \mathcal{X}_B(N) }   ) 
   =  - \deg_\C(  \omega^\Hdg_{A_R/ \mathcal{X}_B(N)}   )   } \\
& \qquad \times
\left(
1+ \frac{2\zeta'(-1)}{\zeta(-1)} 
+  \sum_{ p\mid \mathrm{disc}(B)} \frac{1+p}{1-p} \cdot  \frac{ \log(p)}{2}
+  \sum_{ p\mid N} \frac{1-p}{1+p} \cdot  \frac{ \log(p)}{2}
\right).
\end{align*}
\end{theorem}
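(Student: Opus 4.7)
The plan is to reduce everything to the known arithmetic volume of $\widehat{\omega}^\Hdg_{A_0}$ (due to Kudla--Rapoport--Yang \cite{KRY}), using Proposition~\ref{prop:intermediate hodge compare} as a bridge. The geometric degree is immediate from that Proposition: each $\widehat{\mathcal{F}}_p$ is vertical, so its restriction to the generic fiber is zero, whence $\deg_\C(\omega^\Hdg_{A_R/\mathcal{X}_B(N)}) = \deg_\C(\omega^\Hdg_{A_0/\mathcal{X}_B(N)})$. The right hand side is the mass-formula quantity $\tfrac{1}{12}\prod_{p\mid\mathrm{disc}(B)}(1-p)$ for $\mathcal{X}_B(1)$ multiplied by the degree $\prod_{p\mid N}(1+p)$ of the forgetful morphism $\mathcal{X}_B(N)\to\mathcal{X}_B(1)$, where for squarefree $N$ the factor at each $p\mid N$ counts the $(1+p)$ choices of $\co_B$-stable line in $A_0[p]$.

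\textbf{Squaring the comparison.} For the arithmetic volume I would square the identity of Proposition~\ref{prop:intermediate hodge compare}. Since $\widehat{\mathcal{F}}_p$ and $\widehat{\mathcal{F}}_q$ have supports in disjoint characteristics for $p\ne q$, all cross terms vanish and
\[
\langle \widehat{\omega}^\Hdg_{A_R}, \widehat{\omega}^\Hdg_{A_R} \rangle = \langle \widehat{\omega}^\Hdg_{A_0}, \widehat{\omega}^\Hdg_{A_0} \rangle - 2\sum_{p\mid N}\langle \widehat{\omega}^\Hdg_{A_0}, \widehat{\mathcal{F}}_p \rangle + \sum_{p\mid N}\langle \widehat{\mathcal{F}}_p, \widehat{\mathcal{F}}_p \rangle.
\]
The first term is supplied by KRY on $\mathcal{X}_B(N)$---or, if KRY is invoked at level one, via the projection formula along the finite flat cover $\mathcal{X}_B(N)\to\mathcal{X}_B(1)$, using that the Hodge bundle with its metric pulls back to the Hodge bundle with its metric, so that arithmetic volumes scale by the degree of the cover. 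This accounts for the $1+\frac{2\zeta'(-1)}{\zeta(-1)}$ term and the sum over $p\mid\mathrm{disc}(B)$ in the target formula, and reduces the theorem to showing that the two vertical sums contribute exactly $-\deg_\C(\omega^\Hdg_{A_R})\cdot\sum_{p\mid N}\frac{1-p}{1+p}\cdot\frac{\log p}{2}$.

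\textbf{Vertical intersections and the main obstacle.} Each mixed intersection is computed from the Arakelov formula for the pairing of a hermitian line bundle with a vertical divisor carrying the constant metric $\|1\|=\sqrt{p}$ of \eqref{vertical bundles}:
\[
\langle \widehat{\omega}^\Hdg_{A_0}, \widehat{\mathcal{F}}_p \rangle = \Bigl( \deg_{\F_p}\!\bigl( \omega^\Hdg_{A_0}\big|_{\mathcal{F}_p} \bigr) - \tfrac{1}{2}\deg_\C( \omega^\Hdg_{A_0} ) \Bigr)\log p,
\]
where $\deg_{\F_p}(\omega^\Hdg_{A_0}|_{\mathcal{F}_p})$ is evaluated using the Katz--Mazur/Buzzard identification of $\mathcal{F}_p$ with (a form of) the mod-$p$ reduction of $\mathcal{X}_B(N/p)$, under which the restricted Hodge bundle is canonically the Hodge bundle of the corresponding universal abelian surface, so its degree reduces to an inductive case. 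For the self-intersection, \eqref{level component cancel} yields $\langle \widehat{\mathcal{F}}_p, \widehat{\mathcal{F}}_p \rangle = -\langle \widehat{\mathcal{F}}_p, \widehat{\mathcal{V}}_p \rangle$, and the latter is computed along the supersingular locus $\mathcal{S}_p$ of \eqref{sslocus}, whose length is furnished by the classical supersingular mass formula, together with the parallel metric correction. The main obstacle is the bookkeeping in this last step: one must pin down $\deg(\omega^\Hdg_{A_0}|_{\mathcal{F}_p})$ and $|\mathcal{S}_p|$ as explicit rational functions of $p$, $N$, and $\mathrm{disc}(B)$, and then verify that the two vertical corrections conspire to produce exactly the factor $\frac{1-p}{1+p}\cdot\frac{\log p}{2}$ at each $p\mid N$. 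Lemma~\ref{lem:intermediate fixed point} supplies a useful consistency check, as it forces $\widehat{\omega}^\Hdg_{A_R}$ to be Atkin--Lehner invariant up to vertical contributions at primes not dividing $N$, imposing a symmetry between $\langle \widehat{\omega}^\Hdg_{A_0}, \widehat{\mathcal{F}}_p \rangle$ and $\langle \widehat{\omega}^\Hdg_{A_1}, \widehat{\mathcal{V}}_p \rangle$ that can be used to validate the final answer.
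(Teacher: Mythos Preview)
Your strategy is viable and yields the correct answer, but it is genuinely different from the paper's proof, and the difference is instructive.

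You expand $\widehat{\omega}^\Hdg_{A_R}=\widehat{\omega}^\Hdg_{A_0}-\sum_{p\mid N}\widehat{\mathcal{F}}_p$ and propose to evaluate the cross terms $\langle\widehat{\omega}^\Hdg_{A_0},\widehat{\mathcal{F}}_p\rangle$ directly, using the Katz--Mazur/Buzzard identification $\mathcal{F}_p\cong\mathcal{X}_B(N/p)_{\F_p}$ to compute $\deg(\omega^\Hdg_{A_0}|_{\mathcal{F}_p})=\deg_\C(\omega^\Hdg_{A_0/\mathcal{X}_B(N)})/(1+p)$. With your Arakelov formula (including the archimedean correction from $\|1\|=\sqrt{p}$) and the supersingular mass formula for $\widehat{\vol}(\widehat{\mathcal{F}}_p)$, one finds $\langle\widehat{\omega}^\Hdg_{A_0},\widehat{\mathcal{F}}_p\rangle=\widehat{\vol}(\widehat{\mathcal{F}}_p)$, so the two vertical sums in your expansion combine to $-\sum_{p\mid N}\widehat{\vol}(\widehat{\mathcal{F}}_p)$, and the answer falls out.

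The paper instead expands around $A_R$ and shows that the cross term $\langle\widehat{\mathcal{F}}_p,\widehat{\omega}^\Hdg_{A_R}\rangle$ \emph{vanishes}, so that $\widehat{\vol}(\widehat{\omega}^\Hdg_{A_0})=\widehat{\vol}(\widehat{\omega}^\Hdg_{A_R})+\sum_{p\mid N}\widehat{\vol}(\widehat{\mathcal{F}}_p)$ with no mixed contribution at all. The vanishing is obtained by playing two observations against each other: the Atkin--Lehner involution sends $\widehat{\mathcal{F}}_p$ to $-\widehat{\mathcal{F}}_p$, forcing $\langle\widehat{\mathcal{F}}_p,\widehat{\omega}^\Hdg_{A_R}\rangle=-\langle\widehat{\mathcal{F}}_p,\tau^*\widehat{\omega}^\Hdg_{A_R}\rangle$; meanwhile Lemma~\ref{lem:intermediate fixed point} gives an isogeny $\tau^*A_R\to A_R$ of degree prime to $N$, hence an isomorphism $\omega^\Hdg_{A_R}|_{\mathcal{F}_p}\cong\tau^*\omega^\Hdg_{A_R}|_{\mathcal{F}_p}$, forcing $\langle\widehat{\mathcal{F}}_p,\widehat{\omega}^\Hdg_{A_R}\rangle=+\langle\widehat{\mathcal{F}}_p,\tau^*\widehat{\omega}^\Hdg_{A_R}\rangle$. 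So Lemma~\ref{lem:intermediate fixed point} is not a consistency check in the paper's argument---it is the mechanism that kills the cross term and eliminates any need to identify $\mathcal{F}_p$ with a lower-level Shimura curve or to compute $\deg(\omega^\Hdg_{A_0}|_{\mathcal{F}_p})$. Your route trades that symmetry argument for an explicit degree computation; the paper's route is cleaner but relies on the Atkin--Lehner near-invariance of $A_R$, which is specific to the intermediate abelian surface.
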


\begin{proof}
The metrized Hodge bundle  of the  universal  $A_0 \to \mathcal{X}_B(N)$ has  geometric degree
\begin{equation}\label{A_0 deg}
 \deg _\C  (  \omega^{\Hdg}_{A_0/ \mathcal{X}_B(N) }   )      
=  \frac{1}{12} \prod_{p  \mid \mathrm{disc}(B)} (1-p)  \prod_{p \mid N} (1+p)
\end{equation}
and arithmetic volume
\begin{align}\lefteqn{
 \widehat{\mathrm{vol}}   (  \widehat{\omega}^{\Hdg}_{A_0/ \mathcal{X}_B(N) }   )   = }   \label{A_0 vol}     \\
&     - \deg _\C  (  \omega^{\Hdg}_{A_0/ \mathcal{X}_B(N) }   )    \left(
1 +  \frac{2 \zeta'(-1)}{\zeta(-1)}  +  \sum_{ p\mid \mathrm{disc}(B)} \frac{1+p}{1-p} \cdot \frac{ \log(p) }{2}
\right).     \nonumber
\end{align}
When $N=1$  this is Corollary 7.11.2 of   \cite{KRY}, adjusted to account for the fact that our
metrized Hodge bundle differs from the $\widehat{\omega}$ appearing there by a rescaling of the metric; see especially (3.3.4) of \emph{loc.~cit.}.
The forgetful morphism 
\[
\mathcal{X}_B(N) \to \mathcal{X}_B(1)
\]
 is finite  flat of degree $\prod_{p\mid N}(1+p)$  by Theorem 4.7 of \cite{buzzard}, and    the  formulas at level $N$   follow from those at level $1$  (for the arithmetic volume use  the projection formula of Section III.3.1 of \cite{soule92}).

It is immediate from Proposition \ref{prop:intermediate hodge compare} that the metrized Hodge bundles of $A_R$ and $A_0$  are isomorphic in the complex fiber,  and so the first claim of the theorem follows from \eqref{A_0 deg}.

Fix a prime $p\mid N$, and  recall that the  Atkin-Lehner involution $\tau$ of Definition  \eqref{def:AL}  interchanges  $\mathcal{F}_p$ and $\mathcal{V}_p$.  Using this and  \eqref{level component cancel} we find that 
\[
\tau^* \widehat{\mathcal{F}}_p = \widehat{\mathcal{V}}_p = - \widehat{\mathcal{F}}_p
\]
in the arithmetic Chow group, and so  the invariance of $\widehat{\deg}$ under $\tau^*$ implies
\begin{equation}\label{cross term a}
 \widehat{\deg} \big( \widehat{\mathcal{F}}_p  \cdot \widehat{\omega}^\Hdg_{A_R/ \mathcal{X}_B(N)}  \big)
=- \widehat{\deg} \big( \widehat{\mathcal{F}}_p  \cdot \tau^* \widehat{\omega}^\Hdg_{A_R/ \mathcal{X}_B(N)}  \big).
\end{equation}
On the other hand, any isogeny as in Lemma \ref{lem:intermediate fixed point} induces  a morphism
\[
\omega^\Hdg_{A_R / \mathcal{X}_B(N) }  \to \tau^* \omega^\Hdg_{A_R / \mathcal{X}_B(N) }   
\]
that restricts to an \emph{isomorphism} over the  mod $p$ fiber of $\mathcal{X}_B(N)$, and in particular  over $\mathcal{F}_p$. 
In general, we have
\[
 \widehat{\deg} \big( \widehat{\mathcal{F}}_p  \cdot \widehat{\Omega}  \big)
 = \deg  ( \Omega \mid_{  \mathcal{F}_p  }  )  \cdot  \log(p) 
\]
for any hermitian line bundle $\widehat{\Omega}$ on $\mathcal{X}_B(N)$, and so the existence of such an isomorphism implies 
\begin{equation}\label{cross term b}
 \widehat{\deg} \big( \widehat{\mathcal{F}}_p  \cdot \widehat{\omega}^\Hdg_{A_R/ \mathcal{X}_B(N)}  \big)
= 
\widehat{\deg} \big( \widehat{\mathcal{F}}_p  \cdot \tau^* \widehat{\omega}^\Hdg_{A_R/ \mathcal{X}_B(N)}  \big).
\end{equation}
Combining \eqref{cross term a} and \eqref{cross term b} shows 
\begin{equation}\label{no cross term}
\widehat{\deg} \big( \widehat{\mathcal{F}}_p  \cdot \widehat{\omega}^\Hdg_{A_R/ \mathcal{X}_B(N)}  \big) = 0 .
\end{equation}

It follows from (\ref{level component cancel}) that 
\[
- \widehat{\mathcal{F}}_p  \cdot \widehat{\mathcal{F}}_p  =   \widehat{\mathcal{V}}_p \cdot \widehat{\mathcal{F}}_p 
=   ( \mathcal{S}_p ,0 ) \in \widehat{\mathrm{CH}}^2( \mathcal{X}_B(N) ),
\]
where $(\mathcal{S}_p , 0)$ is the reduced supersingular locus 
 \eqref{sslocus} endowed with the trivial Green current.  Hence
\begin{align}
 - \widehat{\vol} \big( \widehat{\mathcal{F}}_p   \big) 
& = \sum_{     x\in\mathcal{S}_p(\F_p^\alg)   }   \frac{\log(p)}{ | \Aut(x) |}    \nonumber \\
& = 
\frac{ \log(p)}{24}     \left( \frac{ p-1}{p+1} \right)   
\prod_{ \ell \mid \mathrm{disc}(B)  }(   \ell -1  )   \prod_{ \ell \mid N   }(  \ell  +1) ,  \label{mass formula}
\end{align}
where the second equality is the  Eicher-Deuring mass formula, extended to quaternionic Shimura curves by Yu \cite{yu}.

Combining  Proposition \ref{prop:intermediate hodge compare} with  \eqref{no cross term} shows that
\[
\widehat{\mathrm{vol}}  \big( \widehat{\omega}^\Hdg_{A_0 / \mathcal{X}_B(N)} \big)   = 
  \widehat{\mathrm{vol}}   \big(  \widehat{\omega}^\Hdg_{A_R / \mathcal{X}_B(N)} \big)      
+  \sum_{p\mid N}  \widehat{\vol} (  \widehat{\mathcal{F}} _p   ),
\]
and combining this with \eqref{A_0 vol} and  \eqref{mass formula}  proves the second claim of the theorem.
  \end{proof}


\section{Hermitian spaces and lattices}
\label{s:hermitian}


Let $\kk$ be a quadratic imaginary field of discriminant $\mathrm{disc}(\kk)$.
Eventually we will specialize to the case where the discriminant is odd.


\subsection{Quaternion embeddings}


Suppose  $W$ is a hermitian space over $\kk$, which we always assume to be finite dimensional and non-degenerate, and denote by $\langle-,-\rangle$ the  hermitian form.
At each place $p\le \infty$ we define the \emph{local invariant}
\[
\inv_p(W) = (  \det(W) ,  \mathrm{disc}(\kk) )_p \in \{ \pm 1\},
\]
where the right hand side is the Hilbert symbol, and $\det(W) \in \Q^\times / \mathrm{Nm}(\kk^\times)$ is the determinant of $( \langle x_i,x_j \rangle)$ for any   $\kk$-basis of $x_1,\ldots, x_d\in W$.
Two $\kk$-hermitian spaces of the same dimension are isomorphic if and only if they have the same signature and the same local invariants.

\begin{definition}
A \emph{quaternion embedding of $\kk$} is a quaternion algebra $B$ together with an embedding of $\Q$-algebras $\kk \to B$.  
\end{definition}

\begin{remark}
The isomorphism class of a quaternion embedding $\kk \to B$ is completely determined by the isomorphism class of the underlying quaternion algebra $B$, as the Noether-Skolem theorem implies that 
 any two embeddings $\kk \to B$ are $B^\times$-conjugate.
\end{remark}

Suppose  $\kk \to B$ is  an indefinite quaternion embedding. The main involution on $B$ restricts to complex conjugation on $\kk$, so the notation $\overline{a}$ for $a\in \co_\kk$ is unambiguous.
Up to $\kk^\times$-scaling, there exists a unique  $j \in B^\times$ satisfying 
$a j = j \overline{a}$ for all $a\in \kk$.  Any such element satisfies $j^2\in \Q^\times$, and  determines a decomposition
\begin{equation}\label{Bdecomp}
B = \kk \oplus \kk j.
\end{equation}
Fix a $\delta \in \kk^\times$ with $\delta^2=\mathrm{disc}(\kk)$.  The   positive involution on $B$ defined  by 
\begin{equation}\label{positive involution}
b^\dagger = \delta \overline{b} \delta^{-1}
\end{equation}
restricts to complex conjugation on $\kk$, and to the identity on $\kk j$.

\begin{lemma}\label{lem:quaternion symplectic}
Up to $\Q^\times$-scaling,
\begin{equation}\label{perfect symplectic}
\lambda(x,y) = \mathrm{Trd}( \delta^{-1} x \overline{y} ) 
\end{equation}
 is the unique symplectic form  $\lambda: B \times B \to \Q$ satisfying 
 \begin{equation}\label{rosati}
 \lambda( b x,y) = \lambda(x,b^\dagger y).
 \end{equation}
  \end{lemma}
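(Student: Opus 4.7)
The plan is to verify that the stated $\lambda$ has the listed properties and then to classify all Rosati-compatible bilinear forms by a linear-algebra argument on $B$.

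\emph{Existence.} I would check three things. First, $\lambda$ is alternating because $\mathrm{Trd}(\delta^{-1}) = 0$: the relation $\overline{\delta} = -\delta$ gives $\delta^{-1} + \overline{\delta^{-1}} = 0$, so $\lambda(x,x) = \mathrm{Nrd}(x) \cdot \mathrm{Trd}(\delta^{-1}) = 0$. Second, $\lambda$ is non-degenerate, since the reduced trace pairing on $B$ is non-degenerate and $y \mapsto \delta^{-1} x \overline{y}$ is a $\Q$-linear bijection of $B$ whenever $x \neq 0$. Third, the Rosati compatibility is a direct calculation: using $\overline{b^\dagger} = \overline{\delta^{-1}} \cdot b \cdot \overline{\delta} = \delta^{-1} b \delta$ (another application of $\overline{\delta} = -\delta$) and cyclicity of $\mathrm{Trd}$, one gets $\lambda(x, b^\dagger y) = \mathrm{Trd}(\delta^{-1} x \overline{y} \cdot \delta^{-1} b \delta) = \mathrm{Trd}(\delta^{-1} b x \overline{y}) = \lambda(bx,y)$.

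\emph{Uniqueness.} The key observation is that non-degeneracy of $\mathrm{Trd}$ identifies the space of $\Q$-bilinear forms $\mu : B \times B \to \Q$ with $\mathrm{End}_\Q(B)$ via $\mu(x, y) = \mathrm{Trd}(x \cdot f_\mu(y))$ for a unique $\Q$-linear $f_\mu : B \to B$. Translating the Rosati identity through cyclicity of the trace and non-degeneracy gives $f_\mu(b^\dagger y) = f_\mu(y) \cdot b$ for all $b, y \in B$. Setting $y = 1$ collapses this to $f_\mu(y) = t \cdot y^\dagger$ with $t = f_\mu(1)$, so the Rosati-compatible forms are exactly the $\mu_t(x, y) = \mathrm{Trd}(x t y^\dagger)$ parameterized by $t \in B$.

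Finally I would impose antisymmetry. Using that $\mathrm{Trd}$ is $\dagger$-invariant (since $\dagger$ differs from the main involution by an inner automorphism), the identity $\mu_t(x, y) + \mu_t(y, x) = 0$ reduces, after a short manipulation, to $\mathrm{Trd}(x \cdot (t + t^\dagger) \cdot y^\dagger) = 0$ for all $x, y$, hence $t + t^\dagger = 0$. Since $\dagger$ restricts to complex conjugation on $\kk$ and to the identity on $\kk j$ (check: $j^\dagger = \delta \overline{j} \delta^{-1} = -\delta j \delta^{-1} = j$ using $\delta j = -j \delta$), the $(-1)$-eigenspace of $\dagger$ on $B$ is precisely $\Q \delta$, which is one-dimensional over $\Q$. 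Taking the generator $t = \delta^{-1}$ recovers the stated $\lambda$. The main bookkeeping obstacle is handling the inner twist defining $\dagger$ and its interplay with the decomposition $B = \kk \oplus \kk j$—in particular, confirming that $\dagger$ acts as the identity on $\kk j$ is exactly what pins down the $(-1)$-eigenspace to one dimension and yields uniqueness.
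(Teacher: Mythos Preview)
Your proof is correct and takes a genuinely different route from the paper's. The paper argues uniqueness by exploiting the decomposition $B = \kk \oplus \kk j$ directly: it shows that any Rosati-compatible symplectic form $\lambda$ must satisfy $\lambda(\kk j, \kk) = 0$ (via a short calculation using $j^\dagger = j$ and antisymmetry), then observes that $\lambda|_{\kk j}$ is determined by $\lambda|_\kk$ through $\lambda(jx, jy) = j^2 \lambda(x,y)$, and concludes because a symplectic form on the $2$-dimensional $\Q$-space $\kk$ is unique up to scaling. Your approach instead parametrizes all Rosati-compatible bilinear forms via the trace pairing as $\mu_t(x,y) = \mathrm{Trd}(x\, t\, y^\dagger)$ and then identifies antisymmetry with the eigenspace condition $t^\dagger = -t$. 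The paper's argument is more hands-on and uses only the explicit presentation of $B$; yours is more structural and makes transparent why the space of such forms has the dimension it does (it is the $(-1)$-eigenspace of a $\Q$-linear involution on a $4$-dimensional space). Both arguments ultimately rest on the same fact that $\dagger$ fixes $\kk j$ pointwise, which in the paper's version forces orthogonality of the summands and in yours pins down the anti-fixed space to $\Q\delta$.
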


\begin{proof}
The proof that \eqref{perfect symplectic}  satisfies \eqref{rosati}  is  elementary, and left to the reader. 
For the uniqueness claim, any  symplectic form $\lambda$ on $B$ satisfying \eqref{rosati}  also satisfies, for all $x,y\in \kk \subset B$, 
\[
\lambda( x j  , y ) = \lambda(  \overline{y} x j ,  1 ) =   -  \lambda(  1,   \overline{y} x j  )
= -\lambda(  1,  x j y     ) =  -\lambda( x j ,y).
\]
Thus $\lambda( x j  , y )=0$, and the two summands in \eqref{Bdecomp} satisfy $\lambda( \kk j , \kk ) =0$.   
This implies that  $\lambda$ is determined by the restrictions  $\lambda|_\kk$ and $\lambda|_{\kk j}$, which are related by 
 \[
 \lambda|_{ \kk j} ( j x, j y)= j^2 \cdot \lambda|_\kk(x,y)  .
 \]
This shows that  $\lambda$ is determined by  $\lambda|_\kk$, which, being a symplectic form on a $2$-dimensional space,  is unique up to scaling.
\end{proof}

It follows from \eqref{rosati} that $B$ admits   a unique $\kk$-hermitian form $\langle-,-\rangle$  satisfying
\begin{equation}\label{symplectic-hermitian}
\lambda(x,y) = \mathrm{Tr}_{\kk/\Q} \langle  \delta^{-1} x,y \rangle  .
\end{equation}
This form is  given by the explicit formula $\langle x,y\rangle = \pi ( x \overline{y})$, where $\pi : B \to \kk$ is projection to the first factor in \eqref{Bdecomp}.


\begin{proposition}\label{prop:unitary to quaternion}
There is a bijection 
\begin{align*}\lefteqn{
\{ \mbox{iso.~classes of indefinite quaternion embeddings of $\kk$}  \}   } \\
& \iso \{  \mbox{iso.~classes of hermitian spaces over $\kk$ of signature $(1,1)$}  \}
\end{align*}
sending $\kk \to B$ to the  hermitian space whose underlying $\kk$-vector space is $W=B$,  endowed with the hermitian form of \eqref{symplectic-hermitian}.  
If $B$ and $W$ are related in this way, then 
\begin{equation}\label{invariant match}
\inv_p(B) = ( -1 , \mathrm{disc}(\kk) )_p \cdot  \inv_p(W) 
\end{equation}
for all places $p\le \infty$, and  there is an isomorphism
 \begin{equation}\label{unitary exceptional}
( \kk^\times \times B^\times) / \Q^\times  \iso \mathrm{GU}(W)  
\end{equation}
that restricts to $\mathrm{ker}( \mathrm{Nrd} : B^\times \to \Q^\times )  \iso \mathrm{SU}(W)$.
\end{proposition}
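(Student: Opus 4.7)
The plan is to verify in sequence: that $W$ has signature $(1,1)$, the Hilbert symbol identity \eqref{invariant match}, and the group isomorphism \eqref{unitary exceptional}; bijectivity of the top-level correspondence will then follow from the classification of hermitian spaces and quaternion algebras by their local invariants.  First I would compute the Gram matrix of $\langle\cdot,\cdot\rangle$ on $W = \kk \oplus \kk j$.  Using that $j$ has reduced trace zero (so $\bar j = -j$) and that $\pi : B \to \kk$ is projection onto the first summand, one finds $\langle 1,1\rangle = 1$, $\langle j,j\rangle = -j^2$, and $\langle 1,j\rangle = 0$.  The indefiniteness condition $1 = \inv_\infty(B) = (\mathrm{disc}(\kk), j^2)_\infty$ together with $\mathrm{disc}(\kk) < 0$ forces $j^2 > 0$, so $W$ has signature $(1,1)$ with $\det(W) \equiv -j^2$ in $\Q^\times/\mathrm{Nm}(\kk^\times)$.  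Bilinearity of the Hilbert symbol then gives
\[
(-1, \mathrm{disc}(\kk))_p \cdot \inv_p(W) = (-1, \mathrm{disc}(\kk))_p \cdot (-j^2, \mathrm{disc}(\kk))_p = (j^2, \mathrm{disc}(\kk))_p = \inv_p(B),
\]
which is \eqref{invariant match}.

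The top-level bijection now follows from invariants.  Both sides are classified locally: indefinite quaternion algebras over $\Q$ by their local invariants (and by Noether-Skolem the embedding $\kk \to B$ adds no data once $B$ is fixed up to isomorphism), and signature-$(1,1)$ hermitian spaces over $\kk$ by theirs.  Formula \eqref{invariant match} transports one classification to the other.  For surjectivity, given $W$ define $\inv_p(B)$ by \eqref{invariant match}: at $\infty$ one has $(-1, \mathrm{disc}(\kk))_\infty = -1$ and $\inv_\infty(W) = -1$ (since $\det(W) < 0$), so $\inv_\infty(B) = 1$, i.e., $B$ is indefinite; and at the finite primes where $B$ is ramified, $\kk_p$ is necessarily a field, because at primes split in $\kk$ the relation \eqref{invariant match} forces $\inv_p(B) = 1$.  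So $\kk$ embeds in the resulting $B$.

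For the group isomorphism, define $(a, g) \cdot w = awg^{-1}$.  A direct computation using $g^{-1}\overline{g^{-1}} = \mathrm{Nrd}(g)^{-1}$ and the identity $\pi(ax\bar y\bar a) = a\bar a\, \pi(x\bar y)$ yields
\[
\langle awg^{-1}, aw'g^{-1}\rangle = \frac{\mathrm{Nm}(a)}{\mathrm{Nrd}(g)}\langle w, w'\rangle,
\]
so the action lands in $\mathrm{GU}(W)$.  The kernel $\{(a,g) : aw = wg \text{ for all } w\}$ forces $a = g$ to be central in $B$, hence in the diagonal $\Q^\times$.  A dimension count ($2 + 4 - 1 = 5 = \dim \mathrm{GU}(W)$, with both sides connected) gives surjectivity.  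For the restriction, computing right multiplication by $g^{-1}$ in the basis $\{1,j\}$ shows its $\kk$-linear determinant equals $\mathrm{Nrd}(g)^{-1}$, so $g \in \ker(\mathrm{Nrd})$ maps into $\mathrm{SU}(W)$; the induced map is injective between connected $3$-dimensional groups and hence an isomorphism.  The main obstacle is bookkeeping of the various involutions and signs—$\bar j = -j$, the positive involution $b^\dagger$ versus the main involution, and the choice of acting by $g^{-1}$ rather than $g$ on the right—so that \eqref{invariant match} appears with exactly the prescribed Hilbert-symbol twist; once those conventions are pinned down the rest is routine.
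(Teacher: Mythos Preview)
Your proof is correct and follows essentially the same path as the paper's. The paper's argument is quite terse, deferring most of the work to \S 5 of \cite{gross04}: it records the same computation $\det(W)\equiv -j^2$, then exhibits $B\iso\left(\frac{\mathrm{disc}(\kk),\,-\det(W)}{\Q}\right)$ via the basis $\{1,\delta,j,\delta j\}$ to read off \eqref{invariant match} and the inverse construction, and simply states the action $(\alpha,b)\cdot w=\alpha w b^{-1}$ for \eqref{unitary exceptional}. You compute the Gram matrix, the signature, the kernel, and the determinant of right multiplication explicitly where the paper appeals to the reference; and you deduce bijectivity from the local classification theorems rather than by writing down the explicit inverse, but these are the same argument in slightly different packaging.
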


\begin{proof}
Most of this can be found in  \S 5 of \cite{gross04}, so we only add a few comments.  
An easy calculation shows that the hermitian space $W=B$ has determinant
\begin{equation}\label{jsquare}
\det(W) = - j^2 \in \Q^\times / \mathrm{Nm}(\kk^\times),
\end{equation}
and so the $\Q$-basis $\{ 1, \delta , j ,  \delta j \} \subset B$ exhibits an isomorphism 
\[
B \iso  \left(  \frac{ \mathrm{disc}(\kk) , -\det(W)   }{ \Q} \right).
\]
The relation \eqref{invariant match} is clear from this.  
This also shows how to reconstruct the quaternion embedding from the abstract hermitian space $W$, providing the inverse to the function in the statement of the proposition.

The   action of $\kk^\times \times B^\times$ on $W=B$ given by
$
( \alpha , b) \cdot w = \alpha w b^{-1}
$
defines the   isomorphism \eqref{unitary exceptional}.
\end{proof}

\begin{remark}\label{rem:invariant isotropic}
It follows from \eqref{invariant match}  that $\mathrm{inv}_p(B) =1$ if and only if $W\otimes_\Q \Q_p$ is isotropic.
\end{remark}

\begin{proposition}\label{prop:compact dual}
Suppose $\kk \to B$ is an indefinite  quaternion embedding, and let $W=B$ be the signature $(1,1)$ hermitian space of Proposition \ref{prop:unitary to quaternion}.   
   For a $2$-dimensional $\C$-subspace 
   \[
   F^0  W  \subset W_\C = W\otimes_\Q\C,
   \]
    the following are equivalent:
\begin{enumerate}
\item $F^0 W$ is stable under left multiplication by $B$,
\item $F^0 W$ is stable under $\kk$, totally isotropic with respect to \eqref{perfect symplectic}, and free of rank one over $\kk \otimes_\Q \C$.
\end{enumerate}
\end{proposition}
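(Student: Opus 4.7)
The plan is to decompose $W_\C$ into isotypic components under $\kk \otimes_\Q \C$ and translate both conditions into statements about the pieces. Fix the two embeddings $\sigma_\pm : \kk \to \C$, so that $\kk \otimes_\Q \C \cong \C \times \C$ and $W_\C = W_+ \oplus W_-$ with $\dim_\C W_\pm = 2$. Any $\kk$-stable complex subspace of $W_\C$ splits as $V_+ \oplus V_-$ with $V_\pm \subset W_\pm$, and being free of rank one over $\kk \otimes_\Q \C$ amounts to $\dim_\C V_\pm = 1$. The relation $aj = j\bar a$ ensures that left multiplication by $j$ interchanges $W_+$ and $W_-$, so a $\kk$-stable subspace $V_+ \oplus V_-$ is $B$-stable if and only if $V_- = j V_+$ (since then $jV_- = j^2 V_+ = V_+$ automatically, because $j^2 \in \Q^\times$). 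In particular, if $F^0 W$ is $B$-stable and two-dimensional, both $V_\pm$ are one-dimensional, which already yields the freeness part of (2).

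Next I would reduce the isotropy condition to a single scalar equation. Applying the Rosati-type identity $\lambda_\C(ax, y) = \lambda_\C(x, \bar a y)$ for $a \in \kk$ to vectors $x, y \in W_+$ gives $\sigma_+(a) \lambda_\C(x, y) = \sigma_-(a) \lambda_\C(x, y)$, forcing $\lambda_\C$ to vanish on $W_+ \times W_+$, and similarly on $W_- \times W_-$. Hence total isotropy of $V_+ \oplus V_-$ collapses to $\lambda_\C(V_+, V_-) = 0$. Since $\lambda_\C$ is nondegenerate on $W_\C$ and trivial on each diagonal block, the induced pairing $W_+ \times W_- \to \C$ is nondegenerate; consequently, for each one-dimensional $V_+$ there is a unique one-dimensional $V_- \subset W_-$ orthogonal to it.

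To complete the equivalence, I would verify that $V_- = jV_+$ meets this orthogonality requirement. For any $v \in V_+$, using the identity $j^\dagger = j$ (part of the assertion in the paragraph introducing \eqref{positive involution} that $\dagger$ is the identity on $\kk j$) together with the alternating property of $\lambda_\C$,
\[
\lambda_\C(v, jv) = -\lambda_\C(jv, v) = -\lambda_\C(v, j^\dagger v) = -\lambda_\C(v, jv),
\]
so $\lambda_\C(v, jv) = 0$, and hence $\lambda_\C(V_+, jV_+) = 0$. Combined with the uniqueness established above, this forces the $V_-$ in (2) to coincide with $jV_+$, matching condition (1). The main subtlety is juggling the three interacting structures---complex conjugation on $\kk$, the positive involution $\dagger$ on $B$, and the alternating form $\lambda$---but the isotypic decomposition reduces the problem to the single identity $j^\dagger = j$ and a one-line calculation on each component.
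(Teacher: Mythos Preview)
Your argument is correct, and the core computation---showing $\lambda_\C(v,jv)=0$ via $j^\dagger=j$---is exactly the one the paper uses. The organization differs slightly. For (1)$\Rightarrow$(2), the paper invokes the structure of $M_2(\C)$-modules to see that $F^0W$ is the standard representation, hence free of rank one over the diagonal subalgebra $\kk\otimes_\Q\C$; you instead read this off directly from the isotypic decomposition $W_\C=W_+\oplus W_-$ together with the observation that $j$ swaps the pieces, forcing $\dim V_+=\dim V_-=1$. For (2)$\Rightarrow$(1), the paper shows $\lambda_\C(F^0W,jx_0)=0$ and then appeals to the fact that a $2$-dimensional isotropic subspace of a $4$-dimensional symplectic space is maximal isotropic, so $jx_0\in (F^0W)^\perp=F^0W$; your version is the same statement phrased blockwise, using nondegeneracy of the pairing $W_+\times W_-\to\C$ to pin down the orthogonal of $V_+$ inside $W_-$ as a unique line. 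Your isotypic framing is a bit more explicit and avoids the detour through $\kk'=\Q[j]$ that the paper uses to produce a generator; the paper's version is marginally shorter. Both rest on the same identity.
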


\begin{proof}
First suppose  $F^0W$ satisfies condition (1).
We may choose an isomorphism $B\otimes_\Q \C\iso M_2(\C)$ identifying  $\kk \otimes_\Q \C$ with   the subalgebra of diagonal matrices.  This isomorphism makes  $F^0W$ into a left module over the  $\C$-algebra $M_2(\C)$, whose standard representation is the unique left module of complex dimension $2$.
The standard representation is free of rank one over the subalgebra of diagonal matrices, hence so is  $F^0W$.
To see that $F^0 W$ is totally isotropic under the symplectic form \eqref{perfect symplectic}, note that if we set 
$\kk' = \Q[ j] \subset B$,  the  argument above shows that $F^0W$ is free of rank one over $\kk' \otimes_\Q \C$.
If we choose a generator $x_0 \in F^0W$ then, recalling that $j^\dagger = j$, we find 
\[
\lambda( j x_0 ,x_0) = \lambda(x_0,j^\dagger x_0) =\lambda(x_0, j x_0) = - \lambda( j x_0 ,x_0).
\]
It follows that $\lambda$ is identically $0$ on  $\mathrm{Span}_\C \{ x_0 , j x_0\}= F^0 W$.

Now suppose  $F^0W$ satisfies condition (2), and fix a $\kk\otimes_\Q \C$-module generator $x_0 \in F^0W$.
For any $\alpha \in \kk$ we have
\[
\lambda( \alpha x_0 , j x_0) = \lambda( j \alpha x_0 ,  x_0) 
= \lambda(  \overline{\alpha} j  x_0 ,  x_0) =\lambda(j x_0 , \alpha x_0) = - \lambda( \alpha x_0 , j x_0).
\]
It follows  that $\lambda(  F^0W , j x_0) =0$ and,  as $F^0W$ is a maximal isotropic subspace,
 that $j x_0 \in F^0W$.  Thus  $F^0W$ is stable under left multiplication by $j$, and hence under all of  $B= \kk \oplus \kk j$.
 \end{proof}


\subsection{Self-dual lattices}


We wish to parametrize the set 
\[
\Lat_{(1,1)} = \left\{
\begin{array}{c}
\mbox{isometry classes of self-dual} \\ \mbox{hermitian $\co_\kk$-modules of signature $(1,1)$}
\end{array}
\right\}
\]
where  \emph{hermitian $\co_\kk$-module} means a projective $\co_\kk$-module $L$ of finite rank endowed with an $\co_\kk$-valued hermitian form $\langle - ,- \rangle$, and \emph{self-dual} means that $L$ is equal to its dual lattice
\[
L^\vee = \{ x \in L\otimes \Q : \langle x,L \rangle \subset \co_\kk \}.
\]

There is a natural partition of $\Lat_{(1,1)}$, obtained by declaring $L,L' \in \Lat_{(1,1)}$ to be equivalent if
$L\otimes \Q \iso L'\otimes \Q$ as $\kk$-hermitian spaces.  We express this as a decomposition 
\begin{equation}\label{lattice partition}
 \Lat_{(1,1)} = \bigsqcup_W \Lat_W
\end{equation}
where the disjoint union is over  all  $\kk$-hermitian spaces $W$ of signature $(1,1)$ that admit a self-dual $\co_\kk$-lattice, and
\begin{equation}\label{omegaW}
\Lat_W =  \mathrm{U}(W) \backslash  \{ \mbox{self-dual $\co_\kk$-lattices in $W$} \}.
\end{equation}

Let $\mathrm{CL}(\kk)$ be the ideal class group of $\kk$.
The \emph{principal genus} 
$
\mathrm{CL}_0(\kk) \subset \mathrm{CL}(\kk)
$
 is characterized either as   the subgroup of squares, or by
 \[
\mathrm{CL}_0(\kk) \iso   \kk^\times \backslash  \{ \mbox{fractional ideals }\mathfrak{a} \subset \kk  :   \mathrm{Nm} (\mathfrak{a} )  \in \mathrm{Nm}(\kk^\times)  \} .
 \] 
 More adelically, if we let $\kk^1$ denote  the kernel of the norm map $\kk^\times \to \Q^\times$, and similarly for $\widehat{\kk}^1$ and  $\widehat{\co}_\kk^1$,  then  
 \[
\mathrm{CL}_0(\kk)  \iso \kk^1 \backslash \widehat{\kk}^1  / \widehat{\co}_\kk^1 .
\]
Using this last isomorphism and the isomorphism $\kk^\times/ \Q^\times \iso \kk^1$ of Hilbert's Theorem 90, one can prove  the classical formula (originating in  Gauss's genus theory of binary quadratic forms)
\begin{equation}\label{genus theory}
  | \mathrm{CL}_0(\kk) |   =  \frac{ | \mathrm{CL}(\kk) | }{ 2^{o(\kk)-1} },
\end{equation}
where $o(\kk)$  is the number of prime divisors of $\mathrm{disc}(\kk)$.

\begin{proposition}\label{prop:steinitz orbits}
Assume that $\mathrm{disc}(\kk)$ is odd.
The  Steinitz class map
\begin{equation}\label{steinitz}
\mathrm{St} : \Lat_{(1,1)} \to \mathrm{CL}(\kk) ,
\end{equation}
which sends $L \in \Lat_{(1,1)}$ to  the  rank $1$ projective $\co_\kk$-module 
$\mathrm{St}(L) = \bigwedge_{\co_\kk}^2  L$,  is  bijective.  
Under this bijection,  the image of each subset \eqref{omegaW}  is a  coset of the principal genus,  and in fact 
\[
\mathrm{St}(\Lat_W) = \mathfrak{a} \mathrm{CL}_0(\kk)
\]
for any fractional ideal  $\mathfrak{a}$ with  $\mathrm{Nm} (\mathfrak{a} ) = - \det(W)$ in  $\Q^\times/\mathrm{Nm}(\kk^\times)$.
\end{proposition}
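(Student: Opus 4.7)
The plan is to prove the proposition in three stages: first locate $\mathrm{St}(\Lat_W)$ inside a prescribed coset of $\mathrm{CL}_0(\kk)$ by a local-global determinant calculation, then establish surjectivity onto this coset by an explicit construction, and finally prove injectivity on $\Lat_W$ via strong approximation.

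For the first stage, fix $L\in\Lat_W$ and choose a pseudo-basis $L\iso\co_\kk e_1\oplus\mathfrak{a}e_2\subset W$, so that $\mathrm{St}(L)=[\mathfrak{a}]$; let $G$ be the Gram matrix of the hermitian form of $W$ in the $\kk$-basis $\{e_1,e_2\}$, so $\det(G)$ represents $\det(W)\in\Q^\times/\mathrm{Nm}(\kk^\times)$. At each finite prime $p$, trivializing $\mathfrak{a}_p=(\pi_p)$ exhibits $L_p$ as free on $\{e_1,\pi_p e_2\}$ with Gram matrix of determinant $\mathrm{Nm}(\pi_p)\det(G)$, and self-duality of $L_p$ forces this to be a unit in $\co_{\kk_p}\cap\Q_p=\Z_p$. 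Combined with the archimedean sign $\det(G)<0$ coming from signature $(1,1)$, this yields $\det(G)\cdot\mathrm{Nm}(\mathfrak{a})=-1$ in $\Z^\times$, and hence
\[
-\det(W)\equiv\mathrm{Nm}(\mathfrak{a})^{-1}\equiv\mathrm{Nm}(\mathrm{St}(L))\pmod{\mathrm{Nm}(\kk^\times)}
\]
(using that the square $\mathrm{Nm}(\mathfrak{a})^2$ is itself a norm). Since $\mathrm{CL}_0(\kk)$ is the kernel of $\mathrm{Nm}:\mathrm{CL}(\kk)\to\Q^\times/\mathrm{Nm}(\kk^\times)$ by the identifications recalled at the start of this subsection, this gives $\mathrm{St}(\Lat_W)\subset\mathfrak{a}\mathrm{CL}_0(\kk)$.

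For surjectivity, given any ideal $\mathfrak{b}$ in the target coset, I would verify that $L_\mathfrak{b}=\co_\kk\oplus\mathfrak{b}$ equipped with the diagonal Gram matrix $\mathrm{diag}(1,-\mathrm{Nm}(\mathfrak{b})^{-1})$ is integral, self-dual, of signature $(1,1)$, and has Steinitz class $[\mathfrak{b}]$; since its ambient hermitian space has determinant $-\mathrm{Nm}(\mathfrak{b})^{-1}\equiv\det(W)$ modulo norms, and hermitian spaces of signature $(1,1)$ over $\kk$ are classified up to isometry by their determinant, we may identify the ambient space with $W$ and conclude $[\mathfrak{b}]\in\mathrm{St}(\Lat_W)$.

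For injectivity, I would pass to the adelic description
\[
\Lat_W\iso\mathrm{U}(W)(\Q)\backslash\mathrm{U}(W)(\A_f)/\mathrm{U}(\widehat{L}_0),
\]
which is valid because self-dual hermitian $\co_{\kk_p}$-lattices in a given $W_p$ form a single local isometry class at every finite $p$; this is classical at unramified primes, and at odd ramified primes follows from the Jordan decomposition of hermitian lattices (the hypothesis that $\mathrm{disc}(\kk)$ is odd keeps $p=2$ unramified). Under this bijection, the Steinitz class of $g\cdot\widehat{L}_0$ is computed by the determinant character $\det:\mathrm{U}(W)(\A_f)\to\widehat{\kk}^1$, inducing a map $\Lat_W\to\mathrm{CL}_0(\kk)\iso\kk^1\backslash\widehat{\kk}^1/\widehat{\co}_\kk^1$. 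Strong approximation for the simply connected group $\mathrm{SU}(W)\iso B^1$, available via Proposition \ref{prop:unitary to quaternion} since $B$ is indefinite so $B^1(\R)$ is noncompact, then gives $\mathrm{SU}(W)(\A_f)=\mathrm{SU}(W)(\Q)\cdot\mathrm{SU}(\widehat{L}_0)$; a short argument (first adjust $g$ on the left by a rational unitary realizing the needed determinant class in $\kk^1$, then apply strong approximation to the remaining $\mathrm{SU}$-part) shows that two adelic classes with equal Steinitz class coincide. I expect the main technical obstacle to be the local uniqueness statement at the odd ramified primes; once this is in hand, summing the resulting bijections $\Lat_W\iso\mathfrak{a}\mathrm{CL}_0(\kk)$ over $W$ yields the global bijection $\mathrm{St}:\Lat_{(1,1)}\iso\mathrm{CL}(\kk)$.
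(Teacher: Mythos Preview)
Your proposal is correct and rests on the same three ingredients as the paper: Jacobowitz's theorem giving local uniqueness of self-dual lattices (this is where the odd-discriminant hypothesis enters), strong approximation for $\mathrm{SU}(W)$, and the explicit section $\mathfrak{b}\mapsto\co_\kk\oplus\mathfrak{b}$ with diagonal form $\mathrm{diag}(1,-\mathrm{Nm}(\mathfrak{b})^{-1})$.

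The organization differs in a way worth noting. The paper first establishes $|\Lat_W|=|\mathrm{CL}_0(\kk)|$ via the adelic double-coset description and strong approximation, combines this with the count $2^{o(\kk)-1}$ of relevant $W$'s and \eqref{genus theory} to get $|\Lat_{(1,1)}|=|\mathrm{CL}(\kk)|$, and then deduces bijectivity of $\mathrm{St}$ from the existence of the section. To identify $\mathrm{St}(\Lat_W)$ with a coset, the paper introduces a twisting action $L\mapsto L^{\mathfrak{c}}=\mathfrak{c}\otimes_{\co_\kk}L$ satisfying $\mathrm{St}(L^{\mathfrak{c}})=\mathfrak{c}^2\,\mathrm{St}(L)$; since this preserves each $\Lat_W$ and its orbits on $\mathrm{CL}(\kk)$ are exactly the principal-genus cosets, counting forces $\mathrm{St}(\Lat_W)$ to be a single coset, which the section then pins down. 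Your stage-1 local Gram-matrix computation $\det(G)\cdot\mathrm{Nm}(\mathfrak{a})=-1$ replaces this twisting device by a direct argument placing $\mathrm{St}(\Lat_W)$ inside the correct coset from the outset, and you prove injectivity directly by tracking the Steinitz class through the determinant character on the adelic double coset, rather than by counting. Your route is slightly more self-contained (it never invokes the external count of hermitian spaces admitting a self-dual lattice), while the paper's twisting action gives a cleaner conceptual reason why each $\Lat_W$ is a full coset.
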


\begin{proof}
As in Lemma 2.11(i) of \cite{KRunitaryII}, 
there are $2^{ o(\kk) -1}$ distinct hermitian spaces $W$ that contribute to the disjoint union \eqref{lattice partition}.
Fix one.  By a theorem of Jacobowitz\footnote{and our assumption that  $\mathrm{disc}(\kk)$ odd} \cite{jacobowitz}, any two self-dual lattices in $W$ are isometric everywhere locally. Fixing one such lattice $L \subset W$, we obtain a bijection
\[
\mathrm{U}(W) \backslash \mathrm{U}( W_{\widehat{\Q}} ) / \mathrm{U}(  L_{\widehat{\Z}}   ) \map{ g \mapsto gL} \Lat_W.
\]
Using  strong approximation for the (simply connected) group $\mathrm{SU}(W)$, one sees that the function
\[
\mathrm{U}(W) \backslash \mathrm{U}( W_{\widehat{\Q}} ) / \mathrm{U}(  L_{\widehat{\Z}}   ) 
\map{\det} \kk^1 \backslash \widehat{\kk}^1  / \widehat{\co}_\kk^1 \iso \mathrm{CL}_0(\kk)
\]
is also bijective, and so 
\begin{equation}\label{self-dual count}
| \Lat_W | =  | \mathrm{CL}_0(\kk) | .
\end{equation}

The above discussion,  \eqref{lattice partition}, and  \eqref{genus theory}  show that 
$|\Lat_{(1,1)} | = | \mathrm{CL}(\kk) |$, and so to prove the bijectivity of 
 \eqref{steinitz} it suffices to construct a section to the function.  
This can be done by  sending a fractional ideal $\mathfrak{a} \subset \kk$ to the 
self-dual hermitian  $\co_\kk$-module $L  = \co_\kk \oplus \mathfrak{a}$, where the first factor in the orthogonal direct sum is endowed with the hermitian form $\langle x,y\rangle = x\overline{y}$, and the second with 
$\langle x,y\rangle = - x\overline{y} / \mathrm{Nm}(\mathfrak{a})$.

For any $L \in \Lat_{(1,1)}$ and any fractional ideal $\mathfrak{c} \subset \kk$,  we endow the $\co_\kk$-module 
$
L^\mathfrak{c} = \mathfrak{c} \otimes_{\co_\kk}  L   
$
with the  hermitian form
\[
\langle c_1 \otimes x_1 , c_2 \otimes x_2 \rangle = \frac{ c_1\overline{c_2} }{ \mathrm{Nm}(\mathfrak{c}) }
 \cdot  \langle x_1, x_2\rangle.
\]
This defines an action of $\mathrm{CL}(\kk)$  on $\Lat_{(1,1)}$, satisfying
$ \mathrm{St}(  L^\mathfrak{c})  = \mathfrak{c}^2  \mathrm{St}(L)$. 
The determinants of   $L \otimes \Q$ and $L^\mathfrak{c} \otimes \Q$   are equal in $\Q^\times / \mathrm{Nm}(\kk^\times)$, and so comparison of  local invariants shows they   are isomorphic as $\kk$-hermitian spaces.   This proves that each subset $\Lat_W \subset \Lat_{(1,1)}$ is stable under  the action.

If we now use \eqref{steinitz} to identify $\Lat_{(1,1)} = \mathrm{CL}(\kk)$ as $\mathrm{CL}(\kk)$-sets, but with the action on the right hand side given by  $\mathfrak{c} * \mathfrak{a} = \mathfrak{c}^2 \mathfrak{a}$, then each $\Lat_W$ is a union of orbits.  The orbits are exactly the cosets of the principal genus, and the counting formula \eqref{self-dual count} shows that each $\Lat_W$ is a single coset.
The final claim, specifying which coset it is, follows from the explicit  section to \eqref{steinitz} constructed above. 
\end{proof}

\begin{remark}\label{rem:lattice splitting}
As a corollary of the proof, every $L \in \Lat_{(1,1)}$ can be split as an orthogonal direct sum 
$L = \co_\kk \oplus \mathfrak{a}$.  This is false if we drop the assumption that $\mathrm{disc}(\kk)$ is odd.
\end{remark}


\subsection{Construction of Eichler orders}


Fix a $\kk$-hermitian space $W$ of signature $(1,1)$.
Let  $\kk \to B$ be the indefinite quaternion embedding  corresponding to $W$ under the bijection of Proposition \ref{prop:unitary to quaternion}.

\begin{proposition}\label{prop:eichler construction}
Assume that $\mathrm{disc}(\kk)$ is odd, and suppose $L\subset W$ is a self-dual $\co_\kk$-lattice.
 There exists an  Eichler order $R \subset B$ of level
 \begin{equation}\label{level}
N = - \mathrm{disc}(\kk) / \mathrm{disc}(B) 
\end{equation}
(in particular, this is an integer) containing $\co_\kk$ such that:
 \begin{enumerate}
  \item
 The positive involution \eqref{positive involution} satisfies  $R^\dagger =R$.
 \item
Under the symplectic form \eqref{perfect symplectic}, $R$ is a self-dual $\Z$-lattice.
   \item
Under the $\kk$-hermitian form  of \eqref{symplectic-hermitian}, $R$ is a self-dual $\co_\kk$-lattice isometric to $L$.
 \end{enumerate}
\end{proposition}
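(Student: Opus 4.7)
The plan is to build $R$ by hand using the orthogonal splitting of $L$ from Remark \ref{rem:lattice splitting}. That remark supplies an isomorphism $L \iso \co_\kk \oplus \mathfrak{a}$ whose summands carry the hermitian forms $\langle x,y\rangle = x\overline{y}$ and $\langle x,y\rangle = -x\overline{y}/\mathrm{Nm}(\mathfrak{a})$, respectively; in particular $\det(L\otimes\Q) \equiv -1/\mathrm{Nm}(\mathfrak{a}) \pmod{\mathrm{Nm}(\kk^\times)}$, which matches $\det(W) = -j^2$ by \eqref{jsquare}. Since $(cj)^2 = \mathrm{Nm}(c)\cdot j^2$ for $c \in \kk^\times$, I can rescale $j$ within its $\kk^\times$-orbit so that $j^2 = 1/\mathrm{Nm}(\mathfrak{a})$ holds on the nose in $\Q^\times$. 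My proposed Eichler order is then
\[
R = \co_\kk \oplus \mathfrak{a} j \subset B.
\]
Closure under multiplication reduces to $(aj)(a'j) = a\overline{a'}j^2 = a\overline{a'}/\mathrm{Nm}(\mathfrak{a}) \in \co_\kk$, which holds because $\mathfrak{a}\overline{\mathfrak{a}} = \mathrm{Nm}(\mathfrak{a})\co_\kk$.

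Properties (1)--(3) should fall out of this construction. The explicit formula $\langle x,y\rangle = \pi(x\overline{y})$ on $W = B$ gives $\langle 1,1\rangle = 1$, $\langle 1,j\rangle = 0$, and $\langle aj, a'j\rangle = -a\overline{a'}/\mathrm{Nm}(\mathfrak{a})$, so the obvious $\co_\kk$-linear bijection $L \to R$ is an isometry; this is (3) and in particular gives the self-duality of $R$ as an $\co_\kk$-hermitian lattice. For (1), a short computation using $\overline{j} = -j$ and $j\delta = -\delta j$ yields $(aj)^\dagger = \delta\,\overline{aj}\,\delta^{-1} = -\delta a j \delta^{-1} = aj$ for $a \in \kk$; together with $\co_\kk^\dagger = \co_\kk$ this forces $R^\dagger = R$. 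Property (2) reduces to (3): the relation \eqref{symplectic-hermitian}, combined with the fact that $\mathrm{disc}(\kk)$ odd makes the different of $\kk/\Q$ equal to $(\delta)$, shows that the $\lambda$-dual and the $\langle -,-\rangle$-dual of any $\co_\kk$-stable $\Z$-lattice in $B$ coincide.

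The main obstacle, and the real content, is confirming that $R$ is an \emph{Eichler} order of level $N$. My plan is to compute the discriminant of $R$ on the $\Z$-basis $\{1,\tau, a_1 j, a_2 j\}$, where $\{1,\tau\}$ is a $\Z$-basis of $\co_\kk$ and $\{a_1, a_2\}$ is a $\Z$-basis of $\mathfrak{a}$. The Gram matrix $\bigl(\mathrm{Trd}(e_i e_j)\bigr)$ is block diagonal, because $\kk j$ lies in the kernel of $\mathrm{Trd}$; the $\co_\kk$-block has determinant $\mathrm{disc}(\kk)$, while the $\mathfrak{a} j$-block has determinant $|\mathrm{disc}(\kk)|$ after the $\mathrm{Nm}(\mathfrak{a})^2$ factor from $j^2$ cancels the Gram determinant $|\mathrm{disc}(\kk)|\mathrm{Nm}(\mathfrak{a})^2$ of the trace pairing $(x,y)\mapsto\mathrm{Tr}_{\kk/\Q}(x\overline{y})$ on $\mathfrak{a}$. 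The reduced discriminant of $R$ is therefore $|\mathrm{disc}(\kk)|$, which is squarefree because odd fundamental discriminants of imaginary quadratic fields are squarefree. A local-at-$p$ inspection then shows that any quaternion order over $\Z$ with squarefree reduced discriminant is automatically Eichler: at $p \nmid |\mathrm{disc}(\kk)|$ the local discriminant is a unit so $R_p$ is maximal; at $p\mid\mathrm{disc}(B)$ the algebra $B_p$ is division and $R_p$ must be its unique maximal order; and at $p \mid N$ the order $R_p \subset M_2(\Q_p)$ is an index-$p$ suborder of a maximal order, hence conjugate to the unique (up to $B_p^\times$-conjugacy) Eichler order of level $p$. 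The level of $R$ is then $|\mathrm{disc}(\kk)|/\mathrm{disc}(B) = N$, which confirms along the way that $N$ is an integer; the divisibility $\mathrm{disc}(B)\mid|\mathrm{disc}(\kk)|$ required here is itself a consequence of \eqref{invariant match} combined with the standing hypothesis that $W$ admits a self-dual $\co_\kk$-lattice.
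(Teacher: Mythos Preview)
Your proof is correct and follows essentially the same route as the paper: both construct $R=\co_\kk\oplus\mathfrak{a}j$ after normalizing so that $j^2\mathrm{Nm}(\mathfrak{a})=1$, compute the reduced discriminant to be $|\mathrm{disc}(\kk)|$, and argue locally that a squarefree reduced discriminant forces $R$ to be Eichler. The only cosmetic differences are that the paper rescales $\mathfrak{a}$ rather than $j$, verifies the isometry $R\cong L$ by matching Steinitz classes via Proposition~\ref{prop:steinitz orbits} rather than by your explicit isometry from the splitting of $L$, and gives a Jacobson-radical argument for the local Eichler check at $p\mid N$ where you assert the (standard) fact that an index-$p$ suborder of $M_2(\Z_p)$ is automatically the level-$p$ Eichler order.
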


\begin{proof}
Write $B=\kk \oplus \kk j$ be as in \eqref{Bdecomp}, and  fix a fractional ideal 
$\mathfrak{a} \subset \kk$ representing the Steinitz class $\mathrm{St}(L)$.
Combining \eqref{jsquare} with the final claim of Proposition \ref{prop:steinitz orbits} shows that 
\[
\mathrm{Nm}(\mathfrak{a})  \cdot j^2 \in \mathrm{Nm}(\kk^\times),
\]
and so we may  rescale $\mathfrak{a}$ by an element of $\kk^\times$ to assume that 
$
\mathrm{Nm}(\mathfrak{a})  \cdot j^2 =1.
$
This guarantees that 
\[
R =  \co_\kk \oplus \mathfrak{a} j
\]
is an order in $B$, and direct calculation shows that it has reduced discriminant $-\mathrm{disc}(\kk)$. 
Note that the  reduced discriminant is squarefree,  by the assumption that $\mathrm{disc}(\kk)$ is odd.
 The equality $R^\dagger =R$ is clear, as $\dagger$ restricts to complex conjugation on  $\kk$, and to the identity on $\kk j$. 

The reduced discriminant of an order is a multiple of the reduced discriminant of any order that contains it, and the reduced discriminant of any maximal order is $\mathrm{disc}(B)$.   Hence \eqref{level} is an integer.

We claim that \emph{any} order $R\subset B$ of squarefree reduced discriminant $N \mathrm{disc}(B) $   is an Eichler order (necessarily of level $N$).  This   can be checked locally. 
At  a prime  $p\nmid N$, $R_p$ is maximal, and there is nothing to check.
  At a prime $p\mid N$, we may identify $B_p \iso M_2(\Q_p)$ in such a way that $R_p \subset M_2(\Z_p)$ with index $p$.  Let $A \subset M_2(\F_p)$ be the image of $R_p$ under the reduction map.  
  It is a $3$-dimensional $\F_p$-algebra whose  Jacobson radical $J(A)$ is nilpotent and  nonzero; indeed, if $J(A)=0$ then $A$ would be a  semisimple $\F_p$-algebra of dimension $3$, and no such thing embeds into $M_2(\F_p)$. 
   It follows that $J(A) \cdot \F_p^2 \subset \F_p^2$ is an $A$-stable  line, and so $A$ is conjugate to the algebra of upper triangular matrices.  From this one can see that  the isomorphism $B_p\iso M_2(\Q_p)$ may be chosen so that 
\[
R_p=\left\{    \left( \begin{matrix} a & b \\ pc & d  \end{matrix}  \right) : a,b,c,d \in \Z_p\right\} \subset M_2(\Z_p),
\]
which is an Eichler order of level $N$.

The self-duality of $R$ under  the hermitian form of  \eqref{symplectic-hermitian} is a direct calculation.
 It is isometric to $L$ because both have Steinitz class $\mathfrak{a}$, which determines the isometry class by Proposition \ref{prop:steinitz orbits}.  
 Self-duality under the symplectic form follows from self-duality under the hermitian form and the relation \eqref{symplectic-hermitian}.
\end{proof}


\section{Unitary Shimura curves}
\label{s:unitary curves}


Let $\kk$ be a quadratic imaginary field of odd discriminant $\mathrm{disc}(\kk)$.


\subsection{The moduli problem}


As with quaternionic Shimura curves, we will define unitary Shimura curves as moduli spaces of  abelian surfaces with additional structure.

\begin{definition}
 An  \emph{$\co_\kk$-abelian surface of signature $(1,1)$} over a scheme $S$ is an abelian scheme $A \to S$ of relative dimension two, together with a ring homomorphism $\co_\kk \to \End(A)$ such that every $a\in \co_\kk$
acts on $\Lie(A)$ with characteristic polynomial
\[
x^2-(a+\overline{a}) x + a\overline{a} \in \Z[x],
\]
viewed as an element of $\co_S[x]$. 
 \end{definition}

Let $\mathcal{X}_{(1,1)}$ be the Deligne-Mumford stack whose functor of points assigns to a $\Z$-scheme 
$S$ the groupoid of pairs $(A,\lambda)$ in which  $A$ is an $\co_\kk$-abelian surface  of signature $(1,1)$ 
over  $S$, and  $\lambda : A \to A^\vee$ is a principal polarization such that 
\[
\overline{a} = \lambda^{-1}  \circ a^\vee  \circ\lambda \in \End(A)
\]
for every $a\in \co_\kk$; in other words, the Rosati involution on $\End(A)$ restricts to complex conjugation on $\co_\kk$.  

  It is known that $\mathcal{X}_{(1,1)}$ is a regular Deligne-Mumford stack of dimension $2$, flat and of finite type over $\Z$, and smooth over $\Z[1/\mathrm{disc}(\kk)]$. 
Note that regularity and flatness use the  local model calculations of Pappas \cite{pappas}, which requires our standing hypothesis that $\mathrm{disc}(\kk)$ is odd.

By  Proposition 2.12(i) of \cite{KRunitaryII} and its proof, there is a decomposition 
\[
\mathcal{X}_{(1,1)} = \bigsqcup_W \mathcal{X}_W
\]
into open and closed substacks in which,
recalling Proposition \ref{prop:steinitz orbits},  the disjoint union is over the $2^{o(\kk) - 1 } $ isometry classes of $\kk$-hermitian spaces $W$ of signature $(1,1)$ that  contain a self-dual $\co_\kk$-lattice.
The  substack $\mathcal{X}_W$ is characterized as follows:
for any geometric point  $x \to \mathcal{X}_W$ and any prime $\ell \neq \mathrm{char}(x)$,   there exist a $\kk_\ell$-linear isomorphism
\[
\mathrm{Ta}_\ell(A_x) \otimes  \Q_\ell \iso W \otimes \Q_\ell
\]
and a $\Q_\ell$-linear isomorphism $\Q_\ell(1) \iso \Q_\ell$  that identify  the $\ell$-adic Weil pairing on the left (induced by the principal polarization on $A_x$) with the symplectic form 
$
(s,t) \mapsto  \mathrm{Tr}_{\kk_\ell/\Q_\ell}   \langle  \delta^{-1} s , t \rangle  
$
on the right.  Here  $\langle -,-\rangle$ is the hermitian form on $W$,  and $\delta \in \kk^\times$ is any element with $\delta^2 = \mathrm{disc}(\kk)$.

 \begin{remark}
The stacks $\mathcal{X}_W$ are  typically disconnected.
The above decomposition  of $\mathcal{X}_{(1,1)}$ will be refined in Theorem \ref{thm:unitary volume} below.
\end{remark}


\subsection{A morphism of Shimura curves}
\label{ss:the morphism}


Fix an indefinite  quaternion embedding $\kk \to B$, let 
$
W=B
$
 be the signature $(1,1)$ hermitian space of Proposition \ref{prop:unitary to quaternion}, and suppose $L \subset W$ is a self-dual $\co_\kk$-lattice.
Fix a level $N$ Eichler order $R \subset B$ as in  Proposition \ref{prop:eichler construction}, and a maximal order $\co_B$ containing it.
After replacing $L$ by an isometric lattice in $W$, we may assume  $L=R$.

In this subsection we will  construct a morphism 
\begin{equation}\label{generic key}
q_L : \mathcal{X}_B(N)_{/\Q} \to \mathcal{X}_{W/\Q}
\end{equation}
of $\Q$-stacks by realizing the source and target as canonical models of Shimura varieties.   
We emphasize  that the map depends on the  isometry class of  $L\subset W$.  
Indeed, the domain is connected but the codomain usually is not.
As $L$ varies, the  maps   will take values in different connected components.

Define a chain of $\Z$-lattices
\[
\Lambda_0 \subset \Lambda_R \subset \Lambda_1
\]
 in $B$ as follows.    First set 
$\Lambda_1 =\co_B$ and $\Lambda_R = R.$
The subgroup 
\begin{equation}\label{eichler subgroup}
\Lambda_R /N \Lambda_1 \subset \Lambda_1 /N \Lambda_1
\end{equation}
contains a unique index $N$ subgroup stable under the left action of $\co_B$. Namely, identify $\Lambda_R /N \Lambda_1$
 with the subgroup of upper triangular matrices in 
 \[
\Lambda_1 /N \Lambda_1 \iso  \co_B/N\co_B\iso M_2(\Z/N\Z) ,
 \]
  and then take the subgroup of  matrices whose first column is $0$.    
Thus there is a unique left  $\co_B$-stable  sublattice $\Lambda_0 \subset \Lambda_R$  of  index $N$ that contains $
N \Lambda_1$.

Denote by $\mathcal{H}$ the hermitian symmetric domain whose points  $z \in \mathcal{H}$ parametrize  weight $-1$ Hodge structures 
  \[
W_\C = W_z^{0,-1} \oplus  W_z^{-1,0} 
\]
whose  filtrations $F^0_z W = W_z^{0,-1} $ satisfy the equivalent conditions of Proposition \ref{prop:compact dual}.
One can easily verify that $\mathcal{H}$  is isomorphic to the union of the upper and lower complex half-planes.

The reductive group  $G_0 = B^\times$   acts on $W=B$ by \emph{right} multiplication 
\[
g \cdot w = wg^{-1},
\]
and this induces a transitive action of $G_0(\R)$ on $\mathcal{H}$.  
If we define a compact open subgroup   $K_0 = \widehat{R}^\times$ of $G_0(\A_f)$, there is an isomorphism
\[
G_0(\Q) \backslash (  \mathcal{H} \times G_0(\A_f)   / K_0 )  \iso \mathcal{X}_B(N)(\C) 
\]
sending the double coset of a pair $(z,g)$ to the isogeny $f : A_0 \to A_1$ defined by the   $\co_B$-abelian surfaces 
\begin{equation}\label{complex universal}
A_i(\C)   = F^0_z W  \backslash W_\C /  g \cdot \Lambda_i 
\end{equation}
and the inclusion $ g\cdot \Lambda_0  \subset g \cdot \Lambda_1 $.

The group  $G=\mathrm{GU}(W)$ of unitary similitudes of $W$ also acts transitively on $\mathcal{H}$, and the pair
$(G,\mathcal{H})$ is a Shimura datum with reflex field $\Q$.
If we let  $K \subset G(\A_f)$ be the stabilizer of the lattice $L=\Lambda_R$, 
there is an isomorphism\footnote{The surjectivity requires the theorem of Jacobowitz cited in the proof of Proposition \ref{prop:steinitz orbits}.  Jacobowitz's result, which requires our assumption that $\mathrm{disc}(\kk)$ is odd,  guarantees that $G(\A_f)$ acts transitively on set of  self-dual $\co_\kk$-lattices in $W$.}    of complex orbifolds
\[
G(\Q)  \backslash (  \mathcal{H} \times G(\A_f)  / K  )  \iso \mathcal{X}_W(\C) 
\]
sending the double coset of a pair $(z,g)$ to the   $\co_\kk$-abelian surface
\begin{equation}\label{complex intermediate}
A_R(\C)   = F^0_z W  \backslash W_\C /   g \cdot \Lambda_R 
\end{equation}
of signature $(1,1)$.
 To polarize this abelian surface, let $\nu(g) \in \widehat{\Q}^\times$ be the similitude factor of $g$, and write
\[
\nu(g) = r \cdot u
\]
 with $r\in \Q^\times$ and $u\in \widehat{\Z}^\times$.  By Proposition \ref{prop:eichler construction}, the $\Z$-lattice $g \cdot \Lambda_R$ is self-dual  under the rescaled symplectic form $r^{-1} \lambda$, which  defines (plus or minus) a principal polarization.

Proposition \ref{prop:unitary to quaternion} (and its proof) imply that  $G_0 \subset G$
as subgroups of $\GL_\kk(W)$, and the pair $(G_0,\mathcal{H})$ is again a Shimura datum with reflex field $\Q$.
  It is clear from the constructions that $K_0 \subset K$, as right multiplication by $R$ stabilizes $\Lambda_R=R$.

\begin{proposition}\label{prop:complex degree}
The morphism  \eqref{generic key} induced by the morphism of Shimura data
$
(G_0,\mathcal{H}) \to (G,\mathcal{H})
$
is finite \'etale of degree $|\co_\kk^\times|$ over its image (as we have noted already, it need not be surjective). 
\end{proposition}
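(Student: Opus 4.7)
The morphism $q_L$ is induced by the inclusion of Shimura data $(G_0, \mathcal{H}) \hookrightarrow (G, \mathcal{H})$ with compact open subgroups $K_0 \subset K$. I would first verify $K_0 = K \cap G_0(\A_f)$: since $K$ is the stabilizer of $\Lambda_R = R$ in $G(\A_f)$ and $G_0$ acts on $W = B$ by right multiplication, an element $b \in G_0(\A_f) = \widehat{B}^\times$ stabilizes $R$ precisely when $R b^{-1} = R$, i.e.\ $b \in \widehat{R}^\times = K_0$. Because both Shimura data share the common symmetric domain $\mathcal{H}$ (Proposition \ref{prop:compact dual}), the differential of $q_L$ on tangent spaces is the identity, so $q_L$ is \'etale. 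Quasi-finiteness follows from dimensional considerations, and finiteness over the image from the moduli interpretation, in which a fiber encodes the finite set of compatible $\co_B$-enrichments of a given $\co_\kk$-abelian surface.

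For the degree, the key structural input is that $G_0$ is the kernel of the character $\chi\colon G \to \kk^1$, $(\alpha, b) \mapsto \alpha/\overline{\alpha}$, under the identification $G \iso (\kk^\times \times B^\times)/\Q^\times$ of Proposition \ref{prop:unitary to quaternion}. I compute the fiber of $q_L$ over a generic complex point $[z_0, 1]_G$: it consists of classes $[z, g]_{G_0}$ with $(z, g) = (\gamma z_0, \gamma k)$ for some $(\gamma, k) \in G(\Q) \times K$ satisfying $\gamma k \in G_0(\A_f)$. Since $Z(G)(\Q) = \kk^\times$ surjects onto $G(\Q)/G_0(\Q) = \kk^1(\Q)$ via Hilbert~90, the $z$-coordinate is determined modulo $G_0(\Q)$, and one may take $z = z_0$ with $\gamma \in \kk^\times$. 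A direct calculation using the decomposition $R = \co_\kk \oplus \mathfrak{a} j$ from Proposition \ref{prop:eichler construction} identifies $\chi(K) = \widehat{\co}_\kk^1$: the stability condition $\alpha R b^{-1} = R$ combined with centrality of $\alpha$ forces $\alpha R \alpha^{-1} = R$, hence $(\alpha/\overline{\alpha}) \mathfrak{a} = \mathfrak{a}$, which means $\alpha/\overline{\alpha} \in \widehat{\co}_\kk^\times \cap \widehat{\kk}^1 = \widehat{\co}_\kk^1$; every such value is attained by taking $b = \alpha$. So $\gamma \in \kk^\times$ is constrained by $\gamma/\overline{\gamma} \in \co_\kk^\times$. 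Tracing the $(\mathrm{Stab}_{G_0(\Q)}(z_0), K_0) = (\Q^\times, K_0)$-equivalences on pairs $(\gamma, k)$ yields the residual relation $\gamma_1 \sim \gamma_2 \Leftrightarrow \gamma_1/\gamma_2 \in \Q^\times \co_\kk^\times$, so the coarse fiber has cardinality
\[
\bigl|\{\gamma \in \kk^\times : \gamma/\overline{\gamma} \in \co_\kk^\times\} / (\Q^\times \co_\kk^\times)\bigr| = |\co_\kk^\times/(\co_\kk^\times)^2| = 2,
\]
using that $\overline{\zeta} = \zeta^{-1}$ for $\zeta \in \co_\kk^\times$ (hence $\chi(\zeta) = \zeta^2$), together with the observation that $\co_\kk^\times$ is cyclic of even order for every imaginary quadratic $\kk$.

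Finally, the generic points of $\mathcal{X}_W$ and $\mathcal{X}_B(N)$ have automorphism groups $Z(G)(\Q) \cap K = \co_\kk^\times$ and $Z(G_0)(\Q) \cap K_0 = \{\pm 1\}$ respectively (the first from units of $\co_\kk$ acting by scalars and preserving $\Lambda_R$; the second from $\pm 1 \in R^\times$). The stacky degree of $q_L$ therefore equals $2 \cdot |\co_\kk^\times|/|\{\pm 1\}| = |\co_\kk^\times|$, as claimed. The main obstacle is the careful bookkeeping of the various left/right equivalence relations in the fiber computation, and the verification $\chi(K) = \widehat{\co}_\kk^1$ via the Steinitz decomposition of $R$.
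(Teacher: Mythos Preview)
Your adelic fibre-counting argument is correct and yields the right answer, but the paper's proof is considerably shorter and more direct. The paper exploits the fact that the source $\mathcal{X}_B(N)(\C)$ is connected: fixing a component $\mathcal{H}^+ \subset \mathcal{H}$, the morphism onto its image becomes simply
\[
R^1 \backslash \mathcal{H}^+ \to \mathrm{U}(L) \backslash \mathcal{H}^+,
\]
where $R^1 = \ker(\mathrm{Nrd}: R^\times \to \{\pm 1\})$. Since $R^1 \iso \mathrm{SU}(L)$ (last claim of Proposition~\ref{prop:unitary to quaternion}) and the sequence $1 \to \mathrm{SU}(L) \to \mathrm{U}(L) \xrightarrow{\det} \co_\kk^\times \to 1$ is exact (using Remark~\ref{rem:lattice splitting} for surjectivity), the degree is the index $[\mathrm{U}(L):R^1] = |\co_\kk^\times|$ in one line.

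Your route through the character $\chi$, the computation $\chi(K) = \widehat{\co}_\kk^1$, the coarse fibre cardinality $|\co_\kk^\times/(\co_\kk^\times)^2| = 2$, and the automorphism correction $|\co_\kk^\times|/|\{\pm 1\}|$ ultimately encodes the same exact sequence, but disassembled across several steps. What your approach buys is that it stays in the adelic double-coset language throughout and makes the stacky contribution explicit; what the paper's approach buys is that passing to connected components collapses the adelic bookkeeping entirely, replacing it by a single group index. Both are valid; the paper's is the more economical choice here.
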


\begin{proof}
 Fix a  connected component $\mathcal{H}^+ \subset \mathcal{H}$.
The domain of the orbifold morphism  
\[
q_L:\mathcal{X}_B(N)(\C) \to \mathcal{X}_W(\C)
\]
 is connected, and  if we replace the codomain by the  image, the morphism takes the form 
\[
q_L :  R^1  \backslash \mathcal{H}^+ \to \mathrm{U}(L) \backslash \mathcal{H}^+,
\]
where $R^1 \subset R^\times$ is the kernel of the reduced norm $R^\times \to \{\pm 1\}$.
The exactness of 
\[
1 \to \mathrm{SU}(L)  \to \mathrm{U}( L ) \map{\det} \co_\kk^\times \to 1
\]
 follows easily from Remark \ref{rem:lattice splitting}, while  the final claim of Proposition \ref{prop:unitary to quaternion} identifies  $R^1\iso \mathrm{SU}(L)$.  Thus 
$
\deg(q_L) = |  \mathrm{U}(L) :  R^1 ] = | \co_\kk^\times |.
$
\end{proof}


\subsection{Extension to integral models}


The goal of this subsection is show that the morphism   of $\Q$-stacks  \eqref{generic key}  admits a unique extension to a morphism $q_L : \mathcal{X}_B(N) \to \mathcal{X}_W$  of integral models.

As in Definition \ref{def:intermediate}, the Eichler order $R\subset B$ determines a factorization 
\[
A_0 \to A_R \to A_1
\]
of the universal degree $N^2$ isogeny $f:A_0 \to A_1$ over $\mathcal{X}_B(N)$,  in which the intermediate abelian surface  comes with an action  $R \subset \End(A_R)$.  
Restricting this action to  $\co_\kk\subset R$ makes  
\[
A_R \to \mathcal{X}_B(N)
\]
 into an $\co_\kk$-abelian surface of signature $(1,1)$.  Indeed, as $\mathcal{X}_B(N)$ is flat over $\Z$, it is enough to verify the signature condition over the generic fiber, where the natural morphism $\Lie(A_0) \to \Lie(A_R)$ restricts to an isomorphism.  Thus the signature condition on $\Lie(A_R)$ follows from Drinfeld's determinant condition imposed on $\Lie(A_0)$ in \S \ref{ss:drinfeld moduli}.

\begin{proposition}\label{prop:the polarization}
The intermediate abelian surface admits a unique principal polarization $\lambda_R :A_R \to A_R^\vee$ such that
 the induced Rosati involution  restricts to the positive involution \eqref{positive involution} on $R \subset \End(A_R)$.
\end{proposition}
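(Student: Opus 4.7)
The plan is to prove uniqueness by a centralizer argument inside $\End(A_R)_\Q$, and to establish existence by constructing $\lambda_R$ first on the generic fiber from the complex-analytic/Shimura-variety description in \S\ref{ss:the morphism} and then extending to the integral model.

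For uniqueness, suppose $\lambda$ and $\lambda'$ are two such principal polarizations of $A_R$. The ratio $\eta := \lambda^{-1}\circ \lambda' \in \End(A_R)_\Q$ commutes with every $b\in R$, because the Rosati involutions induced by $\lambda$ and $\lambda'$ both restrict to $\dagger$ on $R$. At the generic point of $\mathcal{X}_B(N)$ the endomorphism algebra of $A_R$ is exactly $B = R\otimes\Q$ (in the split case this is visible from Remark \ref{rem:split intermediate}, and when $B$ is a division algebra it follows by dimension count), so $\eta$ lies in the center $Z(B)=\Q$ of $B$. Principality of both polarizations makes $\eta$ a unit in $\Z$ acting on a two-dimensional abelian scheme, hence $\eta = \pm 1$, and positivity of polarizations forces $\eta = 1$. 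This equality propagates from the generic point to all of $\mathcal{X}_B(N)$ by separatedness of $\underline{\Hom}(A_R,A_R^\vee)$.

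For existence, use the complex uniformization \eqref{complex intermediate}, which realizes $A_R(\C) = F^0_z W\backslash W_\C/(g\cdot R)$. By Proposition \ref{prop:eichler construction}(2) the lattice $R$ is self-dual under the symplectic form \eqref{perfect symplectic}, and after the rescaling by the similitude factor $\nu(g)$ described in the paragraph following \eqref{complex intermediate}, this form defines a principal polarization on $A_R(\C)$; its Rosati involution restricts to $\dagger$ on $R$ by the compatibility \eqref{rosati} of Lemma \ref{lem:quaternion symplectic}. Since the construction comes from the Shimura datum for $(G,\mathcal{H})$, it is Galois-equivariant and descends to a principal polarization $\lambda_R^\Q$ on $A_R$ over $\Q$. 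To pass to the integral model, one extends $\lambda_R^\Q$ to a homomorphism $\lambda_R : A_R \to A_R^\vee$ over $\mathcal{X}_B(N)$ using the standard fact that a homomorphism between abelian schemes over a dense open of a regular Noetherian base extends uniquely to the whole base. Symmetry, fiberwise principality (by constancy of degree in a flat family), and the Rosati condition on $R$ all transfer from the generic fiber by Zariski density.

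The main obstacle is this last extension step, which must be carried out over the nonsmooth loci of $\mathcal{X}_B(N)$ in characteristics dividing $N\cdot\mathrm{disc}(B)$. Once one invokes the extension theorem for morphisms of abelian schemes over regular bases, the remaining properties of $\lambda_R$ follow from the generic construction by Zariski density and from the uniqueness already established.
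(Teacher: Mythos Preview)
Your argument is essentially correct but follows a genuinely different route from the paper.  The paper packages both existence and uniqueness into a single moduli-theoretic statement: it introduces the stack $\mathcal{X}_B^\dagger(N)$ classifying objects of $\mathcal{X}_B(N)$ together with a principal polarization of $A_R$ of the required type, and shows that the forgetful morphism $g:\mathcal{X}_B^\dagger(N)\to\mathcal{X}_B(N)$ is an isomorphism.  Uniqueness is injectivity of $g$ on geometric points, proved uniformly at \emph{every} geometric point $x$ via the $\ell$-adic Weil pairing: since $\mathrm{Ta}_\ell(A_{R,x})\otimes\Q_\ell$ is free of rank one over $B_\ell$, Lemma~\ref{lem:quaternion symplectic} forces any two compatible symplectic forms to be proportional.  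Surjectivity on $\C$-points uses the same complex-analytic construction you invoke, and the passage to integral models is handled by showing $g$ is a closed immersion with dense image.

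Your approach trades the moduli functor for the Faltings--Chai extension theorem for homomorphisms of abelian schemes over normal bases, which is a perfectly valid shortcut.  The one soft spot is your uniqueness step: you need the centralizer of $B$ in $\End(A_R)_\Q$ to be $\Q$, which you deduce from $\End(A_R)_\Q=B$ at the geometric generic point.  This is true, but ``by dimension count'' is not really a proof; one should argue, for instance, that a $\C$-point with extra endomorphisms is a CM point, and these form a countable subset of the one-dimensional $\mathcal{X}_B(N)(\C)$.  The paper's $\ell$-adic argument sidesteps this entirely by working at an arbitrary geometric point without ever computing $\End(A_R)_\Q$, which also makes the Galois descent to $\Q$ automatic (your $\lambda_R$ over $\overline{K}$ is unique, hence Galois-invariant).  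In exchange, your route avoids the stack-theoretic lemmas on closed immersions and gives a more hands-on construction; either argument is adequate here.
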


\begin{proof}
Standard arguments (see Chapters 6 and 7 of \cite{hida}, for example), show that 
the functor  that assigns to a scheme $S$ the groupoid 
\[
\mathcal{X}^\dagger_B(N)(S) = \left\{  \begin{array}{c}
\mbox{morphisms $S \to \mathcal{X}_B(N)$ together with a} \\
\mbox{principal polarization }  A_{R/S} \to A_{R/S}^\vee \\
\mbox{whose Rosati involution restricts to $\dagger$ on $R$}
\end{array}  \right\}
\]
is represented by a proper and unramified  morphism  of Deligne-Mumford stacks  
\[
g : \mathcal{X}^\dagger_B(N) \to \mathcal{X}_B(N).
\]

The morphism  $g$ is injective on geometric points of any characteristic.
Indeed, given a geometric point  $x \to \mathcal{X}_B(N)$, choose a prime $\ell \neq \mathrm{char}(x)$.
A lift of $x$ to $\mathcal{X}^\dagger_B(N)$  is determined by a principal polarization  $ \lambda_x : A_{R,x} \to A^\vee_{R,x}$ whose induced $\ell$-adic Weil pairing 
\[
\lambda_x  : \mathrm{Ta}_\ell( A_{R,x} ) \times  \mathrm{Ta}_\ell( A_{R,x} ) \to \Z_\ell
\]
satisfies $\lambda_x (r s, t) =  \lambda_x( s, r^\dagger t)$ for all $r\in R$. 
As $\mathrm{Ta}_\ell( A_{R,x} ) \otimes\Q_\ell$ is free of rank one over $B_\ell$, it follows from  Lemma \ref{lem:quaternion symplectic} (or rather, its  $\ell$-adic analogue) that any two such polarizations span the same $\Q_\ell$-line in $\Hom( A_{R,x} ,  A^\vee_{R,x}) \otimes \Q_\ell$, from which it follows that they are equal.

The morphism  $g$ is surjective on complex points.  Indeed, at any complex point of $\mathcal{X}_B(N)$ the universal $f : A_0 \to A_1$ has the form \eqref{complex universal} for some 
\[
(z,g) \in \mathcal{H} \times G_0(\A_f) \subset \mathcal{H} \times G(\A_f), 
\]
and the intermediate abelian surface takes the form \eqref{complex intermediate} for the same pair.
  We have already explained how to construct a polarization of the desired type on \eqref{complex intermediate}.

The morphism $g$  is quasi-finite and proper, so finite by \cite[\href{https://stacks.math.columbia.edu/tag/02OG}{Tag 02OG}]{stacks-project}.
Being finite, unramified, and injective on geometric points,    
it becomes a closed immersion after pullback to an \'etale cover of $\mathcal{X}_B(N)$, by  
\cite[\href{https://stacks.math.columbia.edu/tag/04HG}{Tag 04HG}]{stacks-project}.  
By \cite[\href{https://stacks.math.columbia.edu/tag/02L6}{Tag 02L6}]{stacks-project}  it was already a closed immersion before pullback to the \'etale cover.

At this point we know that $g$ is a closed immersion, bijective on complex points, whose codomain is integral and flat of finite type over $\Z$.  \'Etale locally, such a morphism has the form
$
\Spec(D/I) \to \Spec(D),
$
where  $D$ is an integral domain flat and of finite type over $\Z$ and $I \subset D$ is an ideal contained in the kernel of any ring homomorphism $D\to \C$.  But $D$ admits an injective homomorphism to $\C$, and so $I=0$.
It follows that   $g$ is an isomorphism.
\end{proof}

%
%

\begin{proposition}\label{prop:key morphism}
There is a unique morphism 
\[
q_L : \mathcal{X}_B(N) \to \mathcal{X}_W
\]
extending the morphism  \eqref{generic key} already constructed in the generic fiber. 
It is relatively representable and  finite \'etale of degree $| \co_\kk^\times|$ over its image, 
and the pullback via $q_L$ of the universal $\co_\kk$-abelian surface $A \to \mathcal{X}_W$  is  isomorphic to the intermediate abelian surface  $A_R \to \mathcal{X}_B(N)$ of Definition \ref{def:intermediate}.
\end{proposition}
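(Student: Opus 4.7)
The plan is to define $q_L$ via the classifying property of $\mathcal{X}_W$. To a test object $(A_0, A_1, f) \in \mathcal{X}_B(N)(S)$ I assign the pair $(A_R, \lambda_R)$, in which $A_R = A_0/C_R$ is the intermediate abelian surface of Definition \ref{def:intermediate} (restricted to its $\co_\kk$-action) and $\lambda_R$ is the principal polarization supplied by Proposition \ref{prop:the polarization}. The signature $(1,1)$ determinant condition on $A_R$ was verified at the start of this subsection, and because the positive involution $\dagger$ of \eqref{positive involution} restricts to complex conjugation on $\co_\kk \subset R$, so does the Rosati involution of $\lambda_R$. This produces a morphism $\mathcal{X}_B(N) \to \mathcal{X}_{(1,1)}$, with the identification $q_L^* A \iso A_R$ of the last sentence of the proposition built in tautologically.

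Next I would verify that the morphism factors through the open-and-closed substack $\mathcal{X}_W \subset \mathcal{X}_{(1,1)}$. Since each geometric fiber of $\mathcal{X}_B(N)$ is connected and $\mathcal{X}_B(N)$ is $\Z$-flat, the stack itself is connected, so its image lies in a single $\mathcal{X}_{W'}$. It then suffices to identify $W' = W$ after passing to the generic fiber, where a direct comparison of the complex uniformization \eqref{complex intermediate} and the $\ell$-adic Tate modules built from the lattice $\Lambda_R = R$ of Proposition \ref{prop:eichler construction} shows that our classifying morphism agrees with the Shimura-theoretic map \eqref{generic key}, which lands in $\mathcal{X}_W$ by construction. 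Uniqueness of the integral extension is then immediate from the $\Z$-flatness of $\mathcal{X}_B(N)$ and the separatedness of $\mathcal{X}_W$.

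Finally, I would establish the geometric properties. Relative representability reduces to injectivity of $q_L$ on geometric automorphism groups. An $\co_B$-linear automorphism $\alpha_0$ of $(A_0, A_1, f)$ whose image on $A_R$ is trivial satisfies $\phi \circ (\alpha_0 - \mathrm{id}) = 0$, so $\alpha_0 - \mathrm{id}$ factors through the finite flat subgroup scheme $C_R \subset A_0$; since any morphism from an abelian scheme to a finite subgroup scheme vanishes, $\alpha_0 = \mathrm{id}$, and similarly for $\alpha_1$. The generic-fiber degree $|\co_\kk^\times|$ is Proposition \ref{prop:complex degree}. To upgrade to an integral statement, note that $q_L$ is quasi-finite (its fibers are discrete, as those of its generic fiber are) and that both $\mathcal{X}_B(N)$ and $\mathcal{X}_W$ are regular Deligne--Mumford stacks of absolute dimension two flat over $\Z$; miracle flatness applied on atlases gives flatness of $q_L$, constancy of the fiber degree, and then finiteness over the image. Étaleness reduces to unramifiedness, which I would check on geometric fibers through a tangent-space calculation using the Serre--Tate/Grothendieck--Messing machinery already invoked in the proof of Proposition \ref{prop:intermediate hodge compare}. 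The main obstacle I anticipate is unramifiedness at the primes dividing $\mathrm{disc}(\kk)$, where neither stack is smooth and one must explicitly match the Pappas local model of $\mathcal{X}_W$ from \cite{pappas} against the local model of $\mathcal{X}_B(N)$ at a supersingular point from \cite{boutot-carayol, buzzard}.
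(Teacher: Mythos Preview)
Your construction of $q_L$ and the argument that it factors through $\mathcal{X}_W$ and is the unique integral extension of \eqref{generic key} match the paper exactly. Your representability argument via faithfulness on automorphism groups is correct and is a pleasant alternative to what the paper does.

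The gap is in your treatment of finiteness and \'etaleness. The assertion that $q_L$ is quasi-finite ``because its generic-fiber fibers are discrete'' is not valid: discreteness of generic fibers says nothing about special fibers. More seriously, you identify but do not close the gap at primes dividing $\mathrm{disc}(\kk)$. A Grothendieck--Messing tangent-space computation would indeed be delicate there, since neither stack is smooth and one would have to compare the Pappas local model with the Drinfeld/Buzzard one. Without unramifiedness you cannot invoke miracle flatness in the order you propose, and without flatness you cannot deduce constancy of the fiber degree.

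The paper avoids all of this by introducing an intermediate moduli stack $\mathcal{X}_W'$ parametrizing objects of $\mathcal{X}_W$ together with an extension of the $\co_\kk$-action to an $R$-action compatible with $\dagger$. The forgetful map $\mathcal{X}_W' \to \mathcal{X}_W$ is relatively representable, finite, and unramified by standard PEL arguments, with no case analysis at bad primes. The substantive point is that the first arrow $\mathcal{X}_B(N) \to \mathcal{X}_W'$ is a \emph{closed immersion}: one can reconstruct $f:A_0\to A_1$ from $A_R$ together with its $R$-action, by writing down explicit $R$-stable subgroup schemes $P\subset Q\subset A_R[N]$ and recovering $A_0$ and $A_1$ as quotients. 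Once $q_L$ is known to be finite and unramified, the induced maps on \'etale local rings are surjections between regular local rings of the same dimension, hence isomorphisms by Krull's Hauptidealsatz; \'etaleness follows without any deformation theory. The key idea you are missing is precisely this factorization through the stack that remembers the full $R$-action.
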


 \begin{proof}
By endowing the intermediate abelian surface $A_R \to \mathcal{X}_B(N)$ with its action of $\co_\kk \subset R$ and  the polarization of Proposition \ref{prop:the polarization}, we obtain a  morphism
\[
q_L : \mathcal{X}_B(N) \to \mathcal{X}_{(1,1)}.
\]
 It is clear from the explicit constructions of  \S \ref{ss:the morphism} that this  extends the morphism  \eqref{generic key} already constructed in the complex fiber.
The flatness of the integral models  over $\Z$  implies both that $q_L$ takes values in $\mathcal{X}_W$ (as this is true in the generic fiber) and that the extension of \eqref{generic key} to integral models is unique.

It remains to prove that $q_L$ is relatively representable and finite \'etale.  
To do so,  factor $q_L$ as a composition
\begin{equation}\label{first factor}
\mathcal{X}_B(N) \to    \mathcal{X}_W' \to \mathcal{X}_W,
\end{equation}
where  the Deligne-Mumford stack in the middle is defined as follows: for any scheme $S$, 
an object of  the groupoid $\mathcal{X}_W'(S)$  is a  principally polarized $\co_\kk$-abelian surface  $(A,\lambda) \in \mathcal{X}_W(S)$   together with an extension of the $\co_\kk$-action to an action
$
R \subset \End(A)
$
such that the Rosati involution induced by $\lambda$ restricts to $\dagger$ on $R$.
Standard arguments (again, see Chapters 6 and 7 of \cite{hida}) show that the forgetful morphism
$\mathcal{X}'_W \to \mathcal{X}_W$ is relatively representable, finite, and unramified. 

Of course, the first arrow in \eqref{first factor} sends $f:A_0 \to A_1$ to its intermediate abelian surface, endowed  with the polarization of Proposition \ref{prop:the polarization}.

\begin{lemma}
The first arrow in \eqref{first factor} is a  closed immersion.  In particular,  it is relatively representable, finite,  and unramified.
\end{lemma}

\begin{proof}
Set $\Z_N=\prod_{p\mid N} \Z_p$, fix an isomorphism 
$
\co_B \otimes_\Z \Z_N \iso M_2(\Z_N)
$
 identifying 
\[
R\otimes_\Z \Z_N = \left\{  \left(\begin{matrix} a & b \\ N c & d \end{matrix}\right) : a,b,c,d \in \Z_N \right\},
\]
and define elements of $R\otimes_\Z \Z_N$ by
\[
e = \left( \begin{matrix} 1 & 0 \\ 0 & 0 \end{matrix}\right),
\qquad 
e' = \left( \begin{matrix} 0 & 0 \\ 0 & 1 \end{matrix}\right) ,
\qquad
w= \left(\begin{matrix} 0 & 1 \\ N & 0 \end{matrix}\right).
\]

Suppose we have a scheme $S$ and  object  $f:A_0 \to A_1$ of  $\mathcal{X}_B(N)(S)$.
If we define finite flat  $R$-stable subgroup schemes 
\begin{equation}\label{PandQ}
P  = e'  \ker(  w : A_R[N] \to A_R[N] )  ,\qquad 
Q = e A_R[N] \oplus P
\end{equation}
of $A_R$ then, recalling the subgroup scheme $C_R = e  \ker(f) \subset A_0$ used in Definition \ref{def:intermediate}, we have
\[
 P= \ker(f) / C_R \subset  A_0/C_R = A_R,
 \]
  and the induced morphism
 \[
 A_1 \iso A_0/\ker(f)  \iso A_R/P \to A_R/ Q
 \]
has the same kernel as the dual isogeny $f^\vee :A_1 \to A_0$ of Definition \ref{def:AL}.
Indeed,  this is easily verified  if we are in the situation $\co_B\iso M_2(\Z)$ of Remark \ref{rem:split intermediate}, so that $f:A_0 \to A_1$ has the form \eqref{elliptic isogeny};
the general case is no different, as one can check the claims after replacing $f: A_0  \to A_1$ with the induced morphisms of $p$-divisible groups for all $p\mid N$,  which again have the form \eqref{elliptic isogeny}.

The above paragraph shows that  we can recover $f:A_0 \to A_1$ from $A_R$ (with its $R$-action), as the dual isogeny to $A_R/P \to A_R/ Q$.
In other words,   for any scheme $S$ the first arrow in \eqref{first factor} defines a fully faithful functor 
\[
\mathcal{X}_B(N)(S) \to \mathcal{X}_W'(S),
\]
whose inverse over the essential image is explicitly known. We must show the essential image is defined by the inclusion of a closed substack of $\mathcal{X}_W'$.

Consider the universal object $A \to \mathcal{X}_W'$.  
There are  finite flat $R$-stable subgroup schemes  $P\subset Q$ of $A$  defined exactly as in \eqref{PandQ}, but with $A_R$ replaced by $A$.
 Imposing the conditions 
 \[
 \mathrm{rank}(P) = N \qquad\mbox{and}\qquad \mathrm{rank}(Q)=N^3
 \]
 cuts out an open and closed substack of $\mathcal{X}_W'$, and imposing the condition that the actions of $R$ on 
\[
A_1=A/P  \qquad  \mbox{and} \qquad A_0=A/Q
\]
 extend to $\co_B$-actions satisfying the determinant condition of Definition \ref{def:OBsurface} cuts out a closed substack.
 Thus there is a maximal closed substack  over which all of these conditions are satisfied.
 
 From what was said above, the first arrow in \eqref{first factor} factors through this closed substack, and over this closed substack the arrow has an inverse,   defined by  dualizing  the $\co_B$-linear isogeny  $A_1 \to A_0$ determined by the inclusion $P\subset Q$.
 \end{proof}

We can now complete the proof of Proposition \ref{prop:key morphism}.  
The morphism $q_L : \mathcal{X}_B(N) \to \mathcal{X}_W$ is relatively representably, finite, and unramified, as each of the arrows in the composition \eqref{first factor} has these properties.
In particular, by \cite[\href{https://stacks.math.columbia.edu/tag/04HG}{Tag 04HG}]{stacks-project}, the induced maps on \'etale local rings are surjective.   
As the source and target of $q_L$ are regular of dimension $2$, their \'etale local rings are integral domains of the same dimension, and  Krull's Hauptidealsatz implies that any surjection between them is an  isomorphisms.  
Hence $q_L$ induces isomorphisms on \'etale local rings,  so is \'etale.  
As the source and target of $q_L$ are flat over $\Z$, the calculation of its degree can be done in characteristic $0$.  This was done in Proposition \ref{prop:complex degree}.   
\end{proof}


\subsection{Connected components and level structure}
\label{ss:level}


Recall from \eqref{omegaW} that $\Lat_W$ denotes the set of isometry classes of self-dual $\co_\kk$-lattices in $W$.
  It follows from the discussion of \S \ref{ss:the morphism} that  
\begin{equation}\label{complex components}
\mathcal{X}_W (\C) = \bigsqcup_{ L \in \Lat_W }  U(L) \backslash \mathcal{H}^+,
\end{equation}
where  each $U(L) \backslash \mathcal{H}^+$ is the orbifold quotient of a connected domain $\mathcal{H}^+$ (isomorphic to the complex upper half-plane) by the action of a discrete group.   Compare with Proposition 3.1 of \cite{KRunitaryII}.

\begin{proposition}\label{prop:components}
There is a unique decomposition
\[
\mathcal{X}_W = \bigsqcup_{ L \in \Lat_W} \mathcal{C}_L 
\]
into a disjoint union of  open and closed substacks, such that taking complex points recovers 
\eqref{complex components}.   For each $L \in \Lat_W$ the morphism 
$
\mathcal{C}_L \to \Spec(\Z)
$
is flat with geometrically connected fibers, and the morphism $q_L$ of Proposition \ref{prop:key morphism} has image  $\mathcal{C}_L$.
\end{proposition}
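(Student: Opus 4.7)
The plan is to define each $\mathcal{C}_L$ as the image of the morphism $q_L : \mathcal{X}_B(N) \to \mathcal{X}_W$ from Proposition \ref{prop:key morphism}, which is finite \'etale onto its image. Since $q_L$ is finite (hence proper), its image is closed in $\mathcal{X}_W$; since $q_L$ is \'etale, the image is also open. Thus each $\mathcal{C}_L$ is an open and closed substack of $\mathcal{X}_W$.

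The next step is to show that the $\mathcal{C}_L$'s are pairwise disjoint and exhaust $\mathcal{X}_W$. The explicit construction in \S \ref{ss:the morphism} identifies $\mathcal{C}_L(\C)$ with the component $\mathrm{U}(L) \backslash \mathcal{H}^+$ appearing in \eqref{complex components}; for distinct $L$'s these components are disjoint, and their union is all of $\mathcal{X}_W(\C)$. Since $\mathcal{X}_W$ is flat over $\Z$, any open and closed substack is flat over $\Z$ as well, and a nonempty flat $\Z$-stack has nonempty generic fiber. Applying this to the pairwise intersections $\mathcal{C}_L \cap \mathcal{C}_{L'}$ for $L \neq L'$ and to the complement of $\bigsqcup_L \mathcal{C}_L$ in $\mathcal{X}_W$ gives disjointness and exhaustion, respectively.

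For the remaining assertions, flatness of $\mathcal{C}_L \to \Spec(\Z)$ is inherited from $\mathcal{X}_W$. For geometric connectedness of the fibers, use that $q_L : \mathcal{X}_B(N) \to \mathcal{C}_L$ is surjective and $\mathcal{X}_B(N)$ has geometrically connected fibers over $\Z$ (recalled in \S \ref{ss:drinfeld moduli}): the continuous image of a connected geometric fiber is connected. Uniqueness of the decomposition follows because an open and closed substack of a flat $\Z$-stack is determined by its generic fiber, which here is prescribed by \eqref{complex components}. The main obstacle in this approach is the exhaustion step, which relies essentially on flatness of $\mathcal{X}_W$ over $\Z$ (an assertion made in the text, resting on Pappas's local model calculations).
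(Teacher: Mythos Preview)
Your proposal is correct and follows essentially the same approach as the paper's proof: define $\mathcal{C}_L$ as the image of the finite \'etale morphism $q_L$, deduce that it is open and closed with geometrically connected fibers from the corresponding property of $\mathcal{X}_B(N)$, identify its complex points with the component $\mathrm{U}(L)\backslash \mathcal{H}^+$, and invoke flatness of $\mathcal{X}_W$ over $\Z$ for the existence and uniqueness of the decomposition. Your version spells out the flatness argument for disjointness and exhaustion slightly more explicitly than the paper, but the substance is identical.
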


\begin{proof}
For each $L\in \Lat_W$ we have constructed in Proposition \ref{prop:key morphism} a  morphism
$
q_L : \mathcal{X}_B(N) \to \mathcal{X}_W.
$
As this morphism is finite \'etale,  its image $\mathcal{C}_L \subset \mathcal{X}_W$ is both open and closed.
As $\mathcal{X}_B(N)$ has geometrically connected fibers, so does $\mathcal{C}_L$.  The morphism $q_L$ was constructed so that its image in the complex fiber is the component $U(L) \backslash \mathcal{H}^+$ of \eqref{complex components}, and so we have found a connected component  satisfying 
$
\mathcal{C}_L(\C) \iso U(L) \backslash \mathcal{H}^+ .
$
The existence and uniqueness of the decomposition follow from this and the flatness of  $\mathcal{X}_W$ over $\Z$.
\end{proof}

Fix a self-dual $\co_\kk$-lattice $L\subset W$.
The hermitian form on $W$ determines an alternating $\Q$-bilinear  form $\lambda : W\times W \to \Q$, as in \eqref{symplectic-hermitian}, under which $L$ is again self-dual. 
Hence $\lambda$ induces a perfect alternating form
\begin{equation}\label{fixed weil}
\lambda_m : L / mL \times L/mL \to \Z/m\Z
\end{equation}
for any integer $m\ge 1$.

Let $A\to \mathcal{C}_L$ be the restriction of the universal polarized $\co_\kk$-abelian surface over $\mathcal{X}_W$ to the connected component determined by $L$.  Denote by 
\[
\mathcal{C}_L(m) \to \Spec(\Z[1/m])
\]
the Deligne-Mumford stack whose functor of points assigns to a $\Z[1/m]$-scheme $S$ the groupoid 
whose objects consist of a morphism $S \to \mathcal{C}_L$ together with  $\co_\kk$-linear  isomorphisms of group schemes 
\begin{equation}\label{level iso}
A_S[m] \iso \underline{L/mL}  ,\qquad \mu_m\iso \underline{\Z/m\Z}
\end{equation}
identifying the Weil pairing on $A_S[m]$  with the pairing \eqref{fixed weil}.

Let  $f:A_0 \to A_1$ be the universal object over $\mathcal{X}_B(N)$.  
For any $m$ prime to $N$, we  denote by 
\[
\mathcal{X}_B(N,m) \to \Spec(\Z[1/m])
\]
the Deligne-Mumford stack whose functor of points assigns to a $\Z[1/m]$-scheme $S$ the groupoid 
whose objects consist of a morphism $S \to \mathcal{X}_B(N)$ together with an $\co_B$-linear isomorphism of group schemes
\[
A_{0/S}[m] \iso \underline{\co_B/m \co_B}.
\]

\begin{proposition}\label{prop:with levels}
For any $m\ge 1$ prime to $N$ there is a commutative diagram 
\begin{equation}\label{q with level}
\xymatrix{
{ \mathcal{X}_B(N,m)  }  \ar[rr] \ar[d] &   & {  \mathcal{C}_L(m)   }  \ar[d]^{\pi_m}  \\
{ \mathcal{X}_B(N)_{ /\Z[1/m]}  }  \ar[rr]_{q_L} &   & {  \mathcal{C}_{ L / \Z[1/m] }   } 
}
\end{equation}
in which all arrows are finite \'etale and relatively representable.  If $m\ge 3$, the top horizontal arrow is an isomorphism of quasi-projective schemes.
\end{proposition}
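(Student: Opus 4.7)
The plan is to build the top arrow by transferring level structures across the canonical isogeny $\phi : A_0 \to A_R$ of Definition \ref{def:intermediate}. Since $\deg(\phi) = N$ is coprime to $m$, the map $\phi$ restricts to an isomorphism $\phi[m] : A_0[m] \xrightarrow{\sim} A_R[m]$. Given an object $(f : A_0 \to A_1,\ \alpha : A_0[m] \cong \co_B/m\co_B)$ of $\mathcal{X}_B(N,m)(S)$, I send it to $A_R$, equipped with the polarization $\lambda_R$ of Proposition \ref{prop:the polarization}, the $\co_\kk$-action restricted from $R$, and the $\co_\kk$-linear isomorphism
\[
A_R[m] \xrightarrow{\alpha \circ \phi[m]^{-1}} \co_B/m\co_B = R/mR \cong L/mL,
\]
where the equality uses $\gcd(m,N) = 1$ and the last arrow is the isometry $R \cong L$ of Proposition \ref{prop:eichler construction}. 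The accompanying trivialization $\mu_m \cong \Z/m\Z$ is then forced by requiring that the Weil pairing on $A_R[m]$ match $\lambda_m$: uniqueness of symplectic forms (Lemma \ref{lem:quaternion symplectic}) guarantees that there is exactly one such trivialization. Commutativity of the square is built in.

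Next I would verify that all four arrows are relatively representable, finite, and \'etale. The bottom arrow is Proposition \ref{prop:key morphism}. Each of the two vertical arrows is a standard full-level-$m$ cover of a moduli of polarized abelian varieties over $\Z[1/m]$: such a cover is a torsor under a finite constant group scheme, and so is finite \'etale and relatively representable by a quasi-projective scheme (cf.\ Chapter 6 of \cite{hida}). The top arrow then inherits these properties, since after fpqc base change by the right vertical it becomes a composition of $q_L$ with (the inverse of) a trivialized torsor, and each of relative representability, finiteness, and \'etaleness is fpqc-local on the target.

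For the final assertion, both $\mathcal{X}_B(N,m)$ and $\mathcal{C}_L(m)$ are quasi-projective schemes over $\Z[1/m]$ when $m \geq 3$, by the standard rigidification theorem for polarized abelian schemes with sufficient level structure. To see that the top arrow is an isomorphism it suffices to show it has degree one, and by the commutativity of the square this amounts to comparing the degrees of the two vertical arrows. In the complex uniformization of \S \ref{ss:the morphism} the two verticals are the level-$m$ covers of $R^1 \backslash \mathcal{H}^+$ and $\mathrm{U}(L) \backslash \mathcal{H}^+$ respectively, and their deck groups differ precisely by those elements of $\mathrm{U}(L)/R^1 \cong \co_\kk^\times$ that act trivially on $L/mL$. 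Under our odd-discriminant hypothesis $\co_\kk^\times$ is either $\{\pm 1\}$ or $\mu_6$, and for each nontrivial $\zeta \in \co_\kk^\times$ one computes $\mathrm{Nm}(\zeta - 1) \in \{1,3,4\}$, so $\zeta - 1 \notin m\co_\kk$ for any $m \geq 3$. Hence the two deck groups coincide, and the top arrow has degree one. The main obstacle is this degree bookkeeping itself: matching the three kinds of level structure ($\co_B$-, $R$-, and $\co_\kk$-linear) precisely enough to relate the vertical degrees to the degree of $q_L$.
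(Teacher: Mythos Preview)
Your proposal is correct and takes essentially the same approach as the paper, which leaves the construction of the diagram as an exercise and focuses on the final claim. Both arguments reduce the $m\ge 3$ isomorphism to showing that any $g\in\mathrm{U}(L)$ acting trivially on $L/mL$ already lies in $\mathrm{SU}(L)\iso R^1$, i.e.\ that $\det(g)\in\co_\kk^\times\cap(1+m\co_\kk)=\{1\}$; the paper cites Mumford for this while you give the explicit norm computation. Two minor points: your fpqc-descent phrasing for the top arrow is a bit tangled (it is cleaner to note that both source and target are finite \'etale over $\mathcal{C}_{L/\Z[1/m]}$, hence so is any morphism between them over that base), and the sentence about ``deck groups differing by elements of $\co_\kk^\times$ that act trivially on $L/mL$'' should really be phrased as comparing the two congruence subgroups $\{g\in\mathrm{SU}(L):g\equiv 1\bmod m\}$ and $\{g\in\mathrm{U}(L):g\equiv 1\bmod m\}$, since the quotient $\co_\kk^\times$ does not itself act on $L/mL$.
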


\begin{proof}
This  amounts to keeping track of \'etale level structure in the proof of Proposition \ref{prop:key morphism}, and we leave everything except for the final claim as an exercise for the reader.

For the final claim,  $\mathcal{C}_L(m)$ is relatively representable and finite over the moduli stack of all principally polarized abelian surfaces with full level $m$ structure over $\Z[1/m]$-schemes.  This moduli stack is a quasi-projective scheme when $m \ge 3$, so the same is true of both $\mathcal{C}_L(m)$ and $\mathcal{X}_B(N,m)$.

As the top horizontal arrow is finite \'etale, it suffices to show that it induces an isomorphism in the complex fiber, which can be done by  examining  the proof of Proposition \ref{prop:complex degree}.  
Indeed, every connected component of the complex fiber of $\mathcal{C}_L(m)$ has the form
$\Gamma(m) \backslash \mathcal{H}^+$ where
\[
\Gamma(m) = \{ g \in \mathrm{U}(L) : g \mbox{ acts trivially on } L/mL\}.
\]
The fiber of the top horizontal arrow over that component is exactly the same, but with $\mathrm{U}(L)$ replaced by $R^1\iso \mathrm{SU}(L)$.    The determinant $\det(g)$ of any $g\in \Gamma(m)$ is an element of $\co_\kk^\times$ satisfying $\det(g) \in 1+ m \co_\kk$, and hence $\det(g) =1$ by the proof of Theorem 5 in Chapter 21 of \cite{mumford-abelian}.  Thus replacing $\mathrm{U}(L)$ by $\mathrm{SU}(L)$ does not change the group $\Gamma(m)$.
\end{proof}


\section{Arithmetic volumes of unitary Shimura curves}
\label{s:final volumes}


Let $\kk$ be a quadratic imaginary field of odd discriminant $\mathrm{disc}(\kk)$.
Suppose $W$ is a $\kk$-hermitian space of signature $(1,1)$ that contains a self-dual $\co_\kk$-lattice
$L\subset W$, and let 
$
\mathcal{C}_L \subset \mathcal{X}_W
$
 be the associated connected component, as in Proposition \ref{prop:components}.
 Let $B$ be the indefinite quaternion algebra associated to $W$ by Proposition \ref{prop:unitary to quaternion}.   As in  \eqref{level}, set
  \[
 N= - \mathrm{disc}(\kk) / \mathrm{disc}(B).
 \]


\subsection{The anisotropic  case}


Assume that $W$ is anisotropic, so that $B$ is a division algebra.
For every prime $p\mid \mathrm{disc}(\kk)$,  define $p^\pdec = \pm p$ as in \eqref{alt p}.
Equivalently, by Remark \ref{rem:invariant isotropic}, 
\[
p^\pdec = 
\begin{cases}
 p & \mbox{if }p\nmid \mathrm{disc}(B)  \\
- p & \mbox{if }p\mid \mathrm{disc}(B)  .
\end{cases}
\]

\begin{theorem}\label{thm:unitary volume}
  The morphism $\mathcal{C}_L \to \Spec(\Z)$ is proper, and the metrized Hodge bundle of the universal $\co_\kk$-abelian surface $A \to \mathcal{C}_L$  has geometric degree
 \[
\deg_\C( \omega^\Hdg_{A / \mathcal{C}_L}   )  
=  \frac{ 1 }{ 12  \cdot  | \co_\kk^\times| }    \prod_{p \mid \mathrm{disc}(\kk)   } (1 + p^\pdec ) 
\]
 and arithmetic volume
\[
\widehat{\mathrm{vol}}\big(  \widehat{\omega}^\Hdg_{A/ \mathcal{C}_L }   \big)  
  =  
 -     \deg_\C(  \omega^\Hdg_{A / \mathcal{C}_L}   )      
\left(
1+  \frac{2\zeta'(-1)}{\zeta(-1)} 
+ \frac{1}{2} \sum_{  p \mid \mathrm{disc}(\kk)     } \frac{ 1- p^\pdec  }{ 1+  p^\pdec } \cdot \log(p)
\right).
\]
In particular, these quantities depend only on the hermitian space $W$, and not on the  connected component $\mathcal{C}_L \subset \mathcal{X}_W$.
\end{theorem}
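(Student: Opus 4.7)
My plan is to reduce the theorem to Theorem \ref{thm:intermediate volume} via the finite \'etale morphism $q_L : \mathcal{X}_B(N) \to \mathcal{C}_L$ of degree $|\co_\kk^\times|$ produced in Proposition \ref{prop:key morphism}, under which the pullback of the universal $\co_\kk$-abelian surface $A$ is precisely the intermediate abelian surface $A_R$. First, since $W$ is anisotropic, \eqref{invariant match} forces $B$ to be a division algebra, so $\mathcal{X}_B(N)$ is proper over $\Z$; the properness of $\mathcal{C}_L$ then follows because it is the image under the finite (hence proper) morphism $q_L$.

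Next, the identification $A_R \iso q_L^* A$ induces an isometry
\[
\widehat{\omega}^\Hdg_{A_R/\mathcal{X}_B(N)} \iso q_L^*\, \widehat{\omega}^\Hdg_{A/\mathcal{C}_L}
\]
of hermitian line bundles, because \'etale pullback commutes with the formation of $\pi_*\Omega^2$, and the Hodge metric \eqref{hodge metric} is defined by a fiberwise complex integral that is preserved under $q_L$ (the map is an isomorphism on each complex fiber). The standard projection formula for finite \'etale morphisms, applied both in arithmetic intersection theory and on the complex analytic side, then gives
\[
\widehat{\vol}\bigl(\widehat{\omega}^\Hdg_{A_R/\mathcal{X}_B(N)}\bigr) = |\co_\kk^\times| \cdot \widehat{\vol}\bigl(\widehat{\omega}^\Hdg_{A/\mathcal{C}_L}\bigr)
\]
and similarly for $\deg_\C$. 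Solving for the invariants of $\widehat{\omega}^\Hdg_{A/\mathcal{C}_L}$ and substituting the formulas of Theorem \ref{thm:intermediate volume} yields closed forms up to indexing.

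The final step is cosmetic: rewrite the products and sums indexed by $p\mid\mathrm{disc}(B)$ and $p\mid N$ uniformly as ones indexed by $p\mid\mathrm{disc}(\kk)$. By \eqref{level} the primes dividing $\mathrm{disc}(\kk)$ are partitioned into those dividing $\mathrm{disc}(B)$ and those dividing $N$, and by \eqref{alt p} together with Remark \ref{rem:invariant isotropic} the convention $p^\pdec = -p$ in the first case and $p^\pdec = p$ in the second lets one check term-by-term that $1+p^\pdec$ and $\tfrac{1-p^\pdec}{1+p^\pdec}$ produce exactly the factors appearing in Theorem \ref{thm:intermediate volume}. Independence from the particular lattice $L\in\Lat_W$ is then manifest, as neither the degree nor the arithmetic volume depends on $L$.

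I expect the main obstacle to lie entirely in Theorem \ref{thm:intermediate volume}, which is the heavy input (built on the Kudla-Rapoport-Yang and Bost-K\"uhn computations); beyond that result, the only subtle point in the present reduction is the compatibility of the Hodge metric with pullback along $q_L$, which is settled as above.
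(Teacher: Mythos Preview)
Your proposal is correct and follows essentially the same argument as the paper's own proof: both invoke Proposition \ref{prop:key morphism} to obtain the finite \'etale cover $q_L$ of degree $|\co_\kk^\times|$ with $q_L^*A\iso A_R$, deduce properness of $\mathcal{C}_L$ from that of $\mathcal{X}_B(N)$, and then apply the projection formula to transport the formulas of Theorem \ref{thm:intermediate volume} down to $\mathcal{C}_L$. Your added remarks on the compatibility of the Hodge metric under pullback and the bookkeeping translation into the $p^\pdec$ notation are correct elaborations of steps the paper leaves implicit.
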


\begin{proof}
We have constructed in Proposition \ref{prop:key morphism} a finite \'etale  surjection $q_L : \mathcal{X}_B(N) \to \mathcal{C}_L$ of degree $|\co_\kk^\times|$, under which  the metrized Hodge bundle of $A\to \mathcal{C}_L$ pulls back to the metrized Hodge bundle of the intermediate abelian surface $A_R \to \mathcal{X}_B(N)$ determined by a level $N$ Eichler order $R \subset \co_B$.  

Remark \ref{rem:invariant isotropic} implies that  $B$ is a division algebra, and so $\mathcal{X}_B(N)$ is proper.
 The properness of $\mathcal{C}_L$ follows. 
The projection formula for arithmetic intersections, as in Section III.3.1 of \cite{soule92},  implies  that 
\[
\widehat{\mathrm{vol}}\big(  \widehat{\omega}^\Hdg_{A_R/ \mathcal{X}_B(N) }   \big)  
=
| \co_\kk^\times | \cdot 
\widehat{\mathrm{vol}}\big(  \widehat{\omega}^\Hdg_{A/ \mathcal{C}_L }   \big),
\]
and similarly for the geometric degree, and so the desired formulas  follow from those of Theorem \ref{thm:intermediate volume}.
\end{proof}


\subsection{Pre-log singular Hermitian line bundles}


Fix an integer $m\ge 1$.
Suppose $\mathcal{C}^*$ is a regular  Deligne-Mumford stack,  proper and flat over $\Z[1/m]$. 
Let $\partial \mathcal{C}^* \subset \mathcal{C}^*$ be a reduced effective divisor,  flat over $\Z[1/m]$,  whose generic fiber has normal crossings.  Set $\mathcal{C} = \mathcal{C}^* \smallsetminus \partial \mathcal{C}^*$.

The \emph{arithmetic Picard group} $\widehat{\Pic}(\mathcal{C})$ is defined, as before,  as the group  of isomorphism classes of hermitian line bundles on $\mathcal{C}$.   
The \emph{pre-log-singular arithmetic Picard group}  $\widehat{\Pic}(\mathcal{C}^*,\mathscr{D}_\BKK)$ is the group of isomorphism classes of  line bundles on $\omega$ on $\mathcal{C}^*$ equipped with a hermitian metric on $\omega|_{\mathcal{C}}$   that is pre-log singular  along the boundary $\partial \mathcal{C}^*$ in the sense\footnote{In  \cite{BBK} this definition is made under the assumption that $\mathcal{C}^*$ is a  scheme, but  there is no difficulty in extending the definition  to Deligne-Mumford stacks.}  of Definition 1.20 of \cite{BBK}.

\begin{proposition}\label{prop:hermitian restriction}
The natural restriction map
\[
 \widehat{\Pic}( \mathcal{C}^* ,\mathscr{D}_\BKK) \to \widehat{\Pic}( \mathcal{C})
 \]
 is injective, and is an isomorphism if $\partial \mathcal{C}^*=\emptyset$. 
\end{proposition}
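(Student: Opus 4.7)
The plan is to address the two assertions separately: the isomorphism when $\partial\mathcal{C}^* = \emptyset$ is essentially formal, while the injectivity is the substantive content.

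When $\partial\mathcal{C}^* = \emptyset$, there is no boundary at which to impose a growth condition, so the pre-log-singular condition is vacuous and the two groups are defined by the same data; the restriction map is then the identity.

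For injectivity, since the map is a group homomorphism it suffices to exhibit a trivializing isomorphism for any $(\omega,\|\cdot\|) \in \widehat{\Pic}(\mathcal{C}^*, \mathscr{D}_\BKK)$ whose restriction to $\mathcal{C}$ is trivial.  Triviality after restriction provides a nowhere-vanishing section $s \in H^0(\mathcal{C}, \omega|_{\mathcal{C}})$ with $\|s\| \equiv 1$ on $\mathcal{C}(\mathbb{C})$.  Regarded as a rational section of $\omega$ on $\mathcal{C}^*$, the section $s$ has divisor $D = \mathrm{div}(s)$ supported on $\partial \mathcal{C}^*$, and it will suffice to prove $D = 0$:  then $s$ extends to a global nowhere-vanishing section of $\omega$ on $\mathcal{C}^*$ with constant unit norm, giving the trivialization $(\omega, \|\cdot\|) \cong (\mathcal{O}_{\mathcal{C}^*}, 1)$ in $\widehat{\Pic}(\mathcal{C}^*, \mathscr{D}_\BKK)$.

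To prove $D = 0$, I would work locally at a generic complex point of each component $E \subset \partial\mathcal{C}^*$.  Choose a local equation $z$ for $E$ and a local frame $e$ of the extended sheaf $\omega$ on $\mathcal{C}^*$, so that $e$ is a nonvanishing generator across $E$.  Writing $s = u z^n e$ with $n = \mathrm{ord}_E(s)$ and $u$ a unit, the identity $\|s\| \equiv 1$ forces
\[
-\log\|e\|^2 \,=\, -2n\log|z| \,+\, O(1)
\]
near $E$, exhibiting a genuine first-order $\log|z|^{-1}$ singularity of $-\log\|e\|^2$ in the frame $e$.  This is incompatible with the pre-log-singular condition of Definition 1.20 of \cite{BBK}, which imposes strictly sub-logarithmic (pre-log-log type) growth on $-\log\|e\|^2$ when $e$ is a frame of the compactified line bundle on $\mathcal{C}^*$.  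Hence $n = 0$ at every boundary component, so $D = 0$.

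The main obstacle is verifying that the pre-log-singular condition of \cite{BBK} really has this strictly sub-logarithmic meaning when expressed in a frame of the line bundle on $\mathcal{C}^*$.  This is the mechanism by which the metric on $\omega|_{\mathcal{C}}$ pins down the extension of $\omega$ across $\partial\mathcal{C}^*$ uniquely, and it is the real content of the proposition; once the local growth bound is confirmed, the rest of the argument is routine.
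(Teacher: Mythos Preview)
Your proposal is correct and follows essentially the same route as the paper: take a trivializing section $s$ with $\|s\|\equiv 1$, view it as a rational section of $\omega$ on $\mathcal{C}^*$ with divisor supported on $\partial\mathcal{C}^*$, and then rule out any nonzero order of vanishing by noting that the pre-log-singular bound (the paper writes it as $|\log\|s_0\||$ dominated by $c\prod_i(\log\log|1/z_i|)^\rho$) forbids a genuine $\log|z|$ term. The one point the paper makes explicit that you leave implicit is that flatness of $\partial\mathcal{C}^*$ over $\Z[1/m]$ is what justifies checking $\mathrm{div}(s)=0$ on the complex fiber alone.
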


\begin{proof}
  The final claim is clear from the definitions, so we only need to prove the injectivity.

Suppose the pre-log singular hermitian line bundle $\widehat{\omega}$ on $\mathcal{C}^*$ becomes trivial after restriction to $\mathcal{C}$.  A choice of  trivializing section   
\[
 s \in H^0( \mathcal{C} , \omega|_\mathcal{C} )
\]
determines  a rational section  of $\omega$ such that $\| s\| =1$ identically on $\mathcal{C}(\C)$, and  whose divisor  is supported on  the boundary $\partial \mathcal{C}^*$.
   We are done if we can show  that this divisor is trivial.  
   As the boundary is  flat over $\Z[1/m]$ by hypothesis, this can be checked  on the complex fiber.

We now view $s$ as a meromorphic section of the holomorphic line bundle $\omega$ on $\mathcal{C}^*(\C)$.
Locally for the orbifold topology, we may choose holomorphic coordinates $z_1,\ldots, z_d$ near a point  $P\in \mathcal{C}^*(\C)$  so that $\partial \mathcal{C}^*(\C)$ is defined by the equation $z_1\cdots z_r =0$, for some $ 0 \le r\le d$, and write 
\[
s= z_1^{e_1} \cdots z_r^{e_r}  s_0
\]
for a nowhere vanishing holomorphic local section  $s_0$.
 By definition of a pre-log singular hermitian metric,   the absolute value of 
\[
\sum_{i=1}^r e_i \log | 1/ z_i |    = \log\| s_0\| 
\]
is bounded by the absolute value of 
$
c  \prod_{i=1}^r \left(  \log \log | 1/ z_i |   \right)^\rho
$
for  positive constants $c$ and $\rho$ on some  polydisc    $| z_1 |,\ldots, | z_d |  < \epsilon$.
  This can only happen if all $e_i=0$, and hence $s$ is holomorphic and nonvanishing  on a neighborhood of  $P \in \mathcal{C}^*(\C)$.
  \end{proof}

\subsection{The isotropic case}


Now suppose that $W$ is the unique  isotropic $\kk$-hermitian space of signature $(1,1)$.
Thus $N= - \mathrm{disc}(\kk)$, and  the associated quaternion algebra is $B= M_2(\Q)$ by Remark \ref{rem:invariant isotropic}.

The center of the unipotent radical of any proper parabolic subgroup of $\mathrm{GU}(W)$ is isomorphic to the additive group scheme $\mathbb{G}_a$ over $\Q$.  This implies that the Shimura datum defining $\mathcal{X}_W$ has a unique complete admissible rational polyhedral cone decomposition, and  \cite{pink} gives us  a canonical toroidal compactification of the generic fiber of $\mathcal{X}_W$.
  We want to extend this canonical compactification to the integral model.
  This can be done as in  \cite{lan} and  \cite{howard-unitary-II}, but we will instead follow \cite{mp}

Let $\mathcal{A} \to \Spec(\Z)$ be the  moduli stack of  principally polarized abelian surfaces.
It is a Deligne-Mumford stack  smooth  over $\Z$ of relative dimension $3$. 
Forgetting the $\co_\kk$-action defines a relatively representable, finite, and unramified morphism
$\mathcal{C}_L \to \mathcal{A}$.

The work of Faltings-Chai \cite{faltings-chai} gives us a family of smooth toroidal compactications 
$\mathcal{A} \to \bar{\mathcal{A}}$,   each of which depends on an auxiliary choice of combinatorial data.  
After making such a choice we define $\bar{\mathcal{C}}_L$ as the normalization of 
$\mathcal{C}_L \to \bar{\mathcal{A}}$.
  See \cite[\href{https://stacks.math.columbia.edu/tag/0BAK}{Tag 0BAK}]{stacks-project} for  normalization.

We now add level structure as in \S \ref{ss:level}.
For any $m\ge 1$  define  $\bar{\mathcal{C}}_L(m)$ as the normalization of
$
\mathcal{C}_L(m) \to \bar{\mathcal{C}}_{ L /\Z[1/m] },
$
or, equivalently, as the normalization of $\mathcal{C}_L(m) \to \bar{\mathcal{A}}_{/ \Z[1/m] }$.
In particular, there is a finite and relatively representable morphism
\[
\pi_m : \bar{\mathcal{C}}_L(m) \to \bar{\mathcal{C}}_{ L /\Z[1/m] }
\]
extending the morphism $\pi_m$ of \eqref{q with level}.

\begin{proposition}\label{prop:compactification}
The stack $\bar{\mathcal{C}}_L(m)$ does not depend on the choice of Faltings-Chai compactification $\bar{\mathcal{A}}$ used in its construction, and satisfies the following properties.
\begin{enumerate}
\item
If $m\ge 3$, it is a projective $\Z[1/m]$-scheme. 

\item
It  is regular, and  smooth over $\Z[1/m]$  outside of finitely many points, all contained in the interior $\mathcal{C}_L(m)$ and supported in characteristics dividing $\mathrm{disc}(\kk)$.
 
 \item
 The boundary $\partial  \bar{\mathcal{C}}_L(m) = \bar{\mathcal{C}}_L(m) \smallsetminus \mathcal{C}_L(m)$, 
 with its reduced substack structure, is a Cartier divisor smooth over $\Z[1/m]$. 
\end{enumerate}
\end{proposition}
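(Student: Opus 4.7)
The plan is to deduce each claim from the description of $\bar{\mathcal{C}}_L(m)$ as a normalization, together with an explicit local analysis at boundary points via the Mumford--Faltings--Chai construction of PEL-type compactifications. For independence from the Faltings--Chai choice $\bar{\mathcal{A}}$, I would use that any two admissible toroidal compactifications of $\mathcal{A}_{/\Z[1/m]}$ are dominated by a common refinement and that normalization in a fixed finite extension of function fields is insensitive to proper birational modifications of the base; hence the two resulting candidates for $\bar{\mathcal{C}}_L(m)$ are canonically isomorphic. For projectivity when $m \geq 3$, choose $\bar{\mathcal{A}}$ projective over $\Z[1/m]$ (Faltings--Chai permits this): since $\mathcal{C}_L(m)$ is quasi-projective by Proposition \ref{prop:with levels} and normalization in a finite extension of function fields over an excellent Noetherian base is a finite morphism, $\bar{\mathcal{C}}_L(m)$ is finite over a projective scheme, hence projective.

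The main step is the local structure at boundary points. Over the generic fiber $(\mathrm{GU}(W), \mathcal{H})$ has dimension one and its rational parabolics have $1$-dimensional unipotent radicals with abelian center, so the admissible cone decomposition is unique and each cusp contributes a single boundary point; these spread out to horizontal sections over $\Z[1/m]$. At a geometric boundary point of $\bar{\mathcal{A}}$, the Mumford--Faltings--Chai construction provides a formal model of the universal semi-abelian degeneration of a principally polarized abelian surface, parametrized by a symmetric matrix of Tate parameters. Imposing the $\co_\kk$-action of signature $(1,1)$ and the level-$m$ structure cuts this data down to a single Tate parameter $q$, identifying the formal completion of $\bar{\mathcal{C}}_L(m)$ at such a point with $\mathrm{Spf}\,R[[q]]$ for a suitable \'etale extension $R$ of $\Z[1/m]$, with boundary cut out by $q=0$. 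This is the signature $(1,1)$ instance of the PEL analysis of \cite{lan} and \cite{howard-unitary-II}. From it, regularity, smoothness of the boundary over $\Z[1/m]$, and its being a Cartier divisor are all immediate, and together with the interior description of $\mathcal{X}_{(1,1)}$ via Pappas's local models (which already confines the non-smooth locus to finitely many points in characteristics dividing $\mathrm{disc}(\kk)$) this yields the remaining assertions.

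The hard step is the boundary identification: one must translate the cusp label data for $(\mathrm{GU}(W), \mathcal{H})$---an isotropic $\co_\kk$-line in $L$ with compatible level-$m$ trivialization---into explicit Mumford degeneration data and verify that the universal such degeneration has formal moduli in one Tate parameter. The cited references treat this in substantially greater generality; the concrete work here is to specialize their constructions to signature $(1,1)$ and check that the normalization $\bar{\mathcal{C}}_L(m) \to \bar{\mathcal{A}}_{/\Z[1/m]}$ realizes the expected formal model. Once this is in place, all remaining properties of $\bar{\mathcal{C}}_L(m)$ follow formally from the description of its completed local rings as regular power series rings over $\Z[1/m]$.
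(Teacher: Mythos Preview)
Your outline is correct and follows essentially the same route as the paper. Both arguments reduce projectivity to finite-over-projective, read off the interior singularities from Pappas's local model computation, and deduce regularity together with the boundary being a smooth Cartier divisor from an explicit local torus-embedding description at cusps.

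There are two minor differences worth noting. First, for independence from the choice of $\bar{\mathcal{A}}$, the paper does not argue via common refinements and birational insensitivity of normalization; instead it observes directly that the Shimura datum $(\mathrm{GU}(W),\mathcal{H})$ admits a \emph{unique} complete admissible rational polyhedral cone decomposition and then cites \cite{mp} for the corresponding uniqueness of the toroidal integral model. Your birational argument is fine here because $\bar{\mathcal{C}}_L(m)$ has relative dimension one, so a proper birational map between normal candidates is automatically an isomorphism, but you should make this curve-specific point explicit, since in higher dimension the claim as stated would fail. Second, for the boundary local structure, the paper cites Theorem~1 of \cite{mp} to obtain the \'etale-local model $\mathbb{G}_{m/\mathcal{B}_z}\hookrightarrow\mathbb{A}^1_{/\mathcal{B}_z}$ directly, rather than specializing the PEL analysis of \cite{lan} or \cite{howard-unitary-II}; since \cite{mp} is framed precisely in terms of normalization inside a Faltings--Chai compactification, it matches the present construction without further translation. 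Your route via \cite{lan} and \cite{howard-unitary-II} would work as well but involves a bit more bookkeeping.
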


\begin{proof}
The independence of $\bar{\mathcal{C}}_L(m)$ on the choice of Faltings-Chai compactification follows from Remark 4.1.6 of \cite{mp}, together with the observation above that the Shimura datum defining $\mathcal{X}_W$ admits a unique complete admissible rational polyhedral cone decomposition.
%

For any integer $m\ge 1$,  denote by 
$
\mathcal{A} (m) \to   \Spec(\Z[1/m])
$
the moduli space of  principally polarized abelian surfaces over $\Z[1/m]$-schemes,  equipped with isomorphisms of group schemes as in \eqref{level iso}, compatible with the Weil pairing. 
As explained in Chapter V.5 of \cite{faltings-chai},  if $m\ge 3$  we may choose the compactification of $\mathcal{A}$ in such a way that $\bar{\mathcal{A}}(m)$, defined as the normalization of 
$
\mathcal{A}(m) \to \bar{\mathcal{A}}_{/\Z[1/m]}
$ 
is a projective $\Z[1/m]$-scheme.  
The stack $\bar{\mathcal{C}}_L(m)$ can then be realized as the normalization of the natural map $\mathcal{C}_L(m) \to  \bar{\mathcal{A}}(m)$, yielding  a  finite and relatively representable morphism
$
\bar{\mathcal{C}}_L(m) \to \bar{\mathcal{A}} (m)
$
whose codomain is a projective scheme.  Hence the domain is    a projective scheme.

Part of the assertion of Theorem 1 of \cite{mp}  is that the singularities of $\bar{\mathcal{C}}_L(m)$ are no worse than those of $\mathcal{C}_L(m)$.
To spell this out  in our simple setting,     given a  geometric point $z\to \bar{\mathcal{C}}_L(m)$  contained in the boundary,  one can find a $\Z[1/m]$-scheme $\mathcal{B}_z$  such that the inclusion 
 $ \mathcal{C}_L(m) \to  \bar{\mathcal{C}}_L(m)$ is,  \'etale locally near $z$, isomorphic  to  the  torus embedding
 \[
   \mathbb{G}_{m/\mathcal{B}_z}  \to \mathbb{A}^1_{ / \mathcal{B}_z} .
 \]
 
 The nonsmooth locus of  $\mathcal{C}_L(m)$  is finite over $\Z[1/m]$, and supported in characteristics dividing $\mathrm{disc}(\kk)$.
The finiteness allows us to choose the \'etale neighborhood of  $z$ small enough that it does not meet any nonsmooth points of the interior.  This implies that 
 $\mathbb{G}_{m/\mathcal{B}_z}$ is smooth over $\Z[1/m]$, which implies the smoothness of $\mathcal{B}_z$, which implies the smoothness of $\mathbb{A}^1_{ / \mathcal{B}_z}$, which implies the smoothness of $\bar{\mathcal{C}}_L(m)$ near $z$.   It follows that  $\bar{\mathcal{C}}_L(m)$ is regular,  its nonsmooth points are contained in the interior, and its boundary is a smooth Cartier divisor.
  \end{proof}


Pullback of hermitian line bundles  defines a homomorphism 
\[
\pi_m^* : \widehat{\Pic} (  \mathcal{C}_{L / \Z[1/m]}   )  \to \widehat{\Pic}(   \mathcal{C}_L(m) ).
\]
By Proposition 7.5 of \cite{BKK}, and recalling  Proposition \ref{prop:hermitian restriction},  this restricts to a homomorphism
\[
\pi_m^* : \widehat{\Pic} (  \bar{\mathcal{C}}_{L / \Z[1/m]}  , \mathscr{D}_\BKK  ) 
 \to \widehat{\Pic}(  \bar{ \mathcal{C}}_L(m)  , \mathscr{D}_\BKK).
\]

Now suppose $m\ge 3$, so that  $\bar{\mathcal{C}}_L(m)$ is a projective scheme, 
and recall the arithmetic Chern class map
\[
\widehat{\Pic}( \bar{\mathcal{C}}_L(m)  , \mathscr{D}_\BKK) 
\to  \widehat{\mathrm{CH}}^1( \bar{\mathcal{C}}_L(m) ,\mathscr{D}_\BKK) 
\]
 from  (1.13) of \cite{BBK}.
 The codomain here is  the arithmetic Chow group with pre-log-log forms with respect to the boundary $\partial \bar{\mathcal{C}}_L(m)$,   as in \S 1 of \cite{BBK} and \S 7 of \cite{BKK}.
Given  a pre-log-log hermitian line bundle
\[
\widehat{\Omega} \in \widehat{\Pic}( \bar{\mathcal{C}}_L(m) , \mathscr{D}_\BKK),
\]
we may form the self-intersection 
\[
\widehat{\Omega} \cdot \widehat{\Omega} \in
  \widehat{\mathrm{CH}}^2( \bar{\mathcal{C}}_L(m) ,\mathscr{D}_\BKK) \otimes_\Z\Q
\]
of its arithmetic Chern class.  If we set 
$
\R_m= \R / \sum_{p\mid m}  \Q  \log(p) ,
$
then Remark 1.19 of \cite{BBK} provides us with an  arithmetic degree
\[
\widehat{\deg} :  \widehat{\mathrm{CH}}^2( \bar{\mathcal{C}}_L(m) ,\mathscr{D}_\BKK) \to  \R_m,
\]
and we define the \emph{arithmetic volume}
$
\widehat{\vol}(\widehat{\Omega} )  =
 \widehat{\deg}  \big(  \widehat{\Omega} \cdot \widehat{\Omega}  \big) \in \R_m.
$

As in \S 6.3 of \cite{BBK},   any pre-log-log hermitian line bundle
\[
\widehat{\Omega} \in \widehat{\Pic}( \bar{\mathcal{C}}_L , \mathscr{D}_\BKK)
\]
has an   \emph{arithmetic volume}    
\[
  \widehat{\vol} \big(\widehat{\Omega} \big)  =    \mil_{m \ge 3} 
\frac{  \widehat{\vol} \big( \pi_m^* \widehat{\Omega} \big)}{  \mathrm{deg}(\pi_m)  }   \in \mil_{m\ge 3}  \R_m = \R,
\]
where the limit is with respect to the natural maps $\R_{m'} \to \R_m$ for $m' \mid m$.


\begin{theorem}\label{thm:compactified unitary volume}
The metrized Hodge bundle of the universal $\co_\kk$-abelian surface $A \to \mathcal{C}_L$ lies in the subgroup
\[
\widehat{\Pic} ( \bar{\mathcal{C}}_L,\mathscr{D}_\BKK ) \subset \widehat{\Pic}(\mathcal{C}_L )
\]
of Proposition \ref{prop:hermitian restriction}.  
It  has geometric degree
 \[
\deg_\C( \omega^\Hdg_{A / \mathcal{C}_L}   )  
=  \frac{ 1 }{ 12  \cdot  | \co_\kk^\times| }    \prod_{p \mid \mathrm{disc}(\kk)   } (1 + p ) 
\]
 and arithmetic volume
\[
\widehat{\mathrm{vol}}\big(  \widehat{\omega}^\Hdg_{A/ \mathcal{C}_L }   \big)  
  =  
 -     \deg_\C(  \omega^\Hdg_{A / \mathcal{C}_L}   )      
\left( 
1+  \frac{2\zeta'(-1)}{\zeta(-1)} 
+ \frac{1}{2} \sum_{  p \mid \mathrm{disc}(\kk)     } \frac{ 1- p  }{ 1+  p } \cdot \log(p)
\right).
\]
In particular, these are independent of  the  connected component $\mathcal{C}_L \subset \mathcal{X}_W$.
\end{theorem}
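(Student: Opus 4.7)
The proof parallels that of Theorem \ref{thm:unitary volume}, but now both $\mathcal{X}_B(N)$ and $\mathcal{C}_L$ are non-proper, so the computation must take place in the Burgos-Kramer-K\"uhn setting. Since $W$ is isotropic, Remark \ref{rem:invariant isotropic} gives $B\iso M_2(\Q)$, and so by Remark \ref{rem:split intermediate} the curve $\mathcal{X}_B(N)$ is the modular curve $Y_0(N)$, whose metrized Hodge bundle over the cuspidal compactification $X_0(N)$ has arithmetic volume computed by Bost and K\"uhn \cite{kuhn}. The strategy is to transfer the desired identity along the morphism $q_L : \mathcal{X}_B(N) \to \mathcal{C}_L$ of Proposition \ref{prop:key morphism}.

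The first step is to extend $q_L$ to a finite morphism $\bar{q}_L : \bar{\mathcal{X}}_B(N) \to \bar{\mathcal{C}}_L$ of toroidal compactifications, where $\bar{\mathcal{X}}_B(N)$ is the usual cusp compactification of $Y_0(N)$. Because $\bar{\mathcal{C}}_L$ is defined as the normalization of $\mathcal{C}_L \to \bar{\mathcal{A}}$ in a Faltings-Chai compactification, and the composition $\mathcal{X}_B(N) \to \mathcal{C}_L \to \mathcal{A}$ extends canonically to $\bar{\mathcal{X}}_B(N) \to \bar{\mathcal{A}}$ (via the semistable degeneration of the intermediate abelian surface $A_R$ at the cusps), the universal property of normalization produces $\bar{q}_L$. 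At auxiliary level $m\ge 3$ one obtains a commutative diagram of compactified level covers extending \eqref{q with level}, in which both horizontal arrows are finite \'etale of degree $|\co_\kk^\times|$.

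The second step is to show that $\widehat{\omega}^{\mathrm{Hdg}}_{A/\mathcal{C}_L}$ lies in $\widehat{\Pic}(\bar{\mathcal{C}}_L, \mathscr{D}_\BKK)$. Its pullback via $\bar{q}_L$ is the metrized Hodge bundle of the intermediate abelian surface $A_R$, which by Proposition \ref{prop:intermediate hodge compare} differs from $\widehat{\omega}^{\mathrm{Hdg}}_{A_0/\mathcal{X}_B(N)}$ only by the interior vertical divisors $\sum_{p\mid N}\widehat{\mathcal{F}}_p$. Since $A_0\iso E_0\times E_0$ with $E_0 \to Y_0(N)$ the universal elliptic curve (Remark \ref{rem:split intermediate}), this bundle is the square of the Petersson-metrized Hodge bundle on $X_0(N)$, whose pre-log-singular behaviour at the cusps is classical and worked out in \cite{kuhn}. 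The pre-log-log condition descends along the finite \'etale cover $\bar{q}_L$, giving the claim.

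The final step applies the BKK projection formula to obtain
\[
\widehat{\vol}\big(\widehat{\omega}^{\mathrm{Hdg}}_{A/\mathcal{C}_L}\big) = \frac{1}{|\co_\kk^\times|}\widehat{\vol}\big(\widehat{\omega}^{\mathrm{Hdg}}_{A_R/\mathcal{X}_B(N)}\big),
\]
and an analogous identity for the geometric degree. Decomposing the right-hand side via Proposition \ref{prop:intermediate hodge compare}, it becomes the Bost-K\"uhn arithmetic volume of $\widehat{\omega}^{\mathrm{Hdg}}_{A_0/\mathcal{X}_B(N)}$ on $X_0(N)$, together with cross terms and self-intersections of the vertical $\widehat{\mathcal{F}}_p$'s, all of which are handled by the Atkin-Lehner involution and the Eichler-Deuring mass formula exactly as in the proof of Theorem \ref{thm:intermediate volume}, since those computations take place in the interior. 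The main obstacle is the compactification bookkeeping in the first two steps --- the compatibility of $X_0(N)$ with the Faltings-Chai compactification used to define $\bar{\mathcal{C}}_L$, and the verification that the BKK projection formula applies to $\bar{q}_L$; given these, the stated formula follows by specializing Theorem \ref{thm:intermediate volume} to $\mathrm{disc}(B)=1$ and $N=-\mathrm{disc}(\kk)$, noting that $p^\pdec=p$ for every $p\mid\mathrm{disc}(\kk)$ in this case, and dividing by $|\co_\kk^\times|$.
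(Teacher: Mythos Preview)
Your strategy is sound and parallels the paper's, but you take a slightly more indirect route and make one factual slip that hides the key simplification.

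You propose to extend $q_L$ to a finite \'etale map $\bar{q}_L$ of stacky compactifications and then invoke a BKK projection formula for it. The paper instead works entirely at auxiliary level $m \ge 3$, where by Proposition~\ref{prop:with levels} the top horizontal arrow in \eqref{q with level} is an \emph{isomorphism} of projective schemes, not a degree $|\co_\kk^\times|$ cover as you assert. The paper proves as a lemma that this isomorphism $\mathcal{C}_L(m) \cong \mathcal{X}_B(N,m)$ extends to the compactifications $\bar{\mathcal{C}}_L(m) \cong \bar{\mathcal{X}}_B(N,m)$; this is exactly where the compatibility with the Faltings--Chai compactification is checked, via the semi-abelian extension of $A_R = E_1 \times E_0$ and the universal property of normalization. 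Since the arithmetic volume on the stack $\bar{\mathcal{C}}_L$ is \emph{defined} as an inverse limit of the quantities $\widehat{\vol}(\pi_m^*\widehat{\Omega})/\deg(\pi_m)$, one then computes directly on $\bar{\mathcal{X}}_B(N,m)$ using K\"uhn's formulas, reruns the argument of Theorem~\ref{thm:intermediate volume} verbatim there, and divides by $\deg(\pi_m)$. The factor $|\co_\kk^\times|$ enters through the degree relation $\deg(\pi_m) = |\co_\kk^\times| \cdot d(m)/[\mathrm{SL}_2(\Z):\Gamma_0(N)]$, not through a projection formula for $q_L$.

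Your route would ultimately reduce to the same calculation --- unwinding the definition of the stacky arithmetic volume shows that the projection formula you want for $\bar{q}_L$ is equivalent to the level-$m$ identity --- but it front-loads precisely the ``obstacles'' you flag (\'etaleness of $\bar{q}_L$ at the cusps, validity of BKK projection for a morphism of stacks, descent of the pre-log-log condition) that the paper's passage to level structure simply sidesteps.
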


\begin{proof}
As in Remark \ref{rem:split intermediate}, fix $\co_B \iso M_2(\Z)$, and identify the quaternionic Shimura curve $\mathcal{X}_B(N)$ with the open modular curve of level $\Gamma_0(N)$. 
The universal isogeny of $\co_B$-abelian surfaces takes the form
\begin{equation}\label{modular splitting}
A_0 \iso E_0 \times E_0 \to E_1\times E_1\iso A_1
\end{equation}
for elliptic curves $E_0, E_1 \to \mathcal{X}_B(N)$.   
For any integer $m \ge 3 $ relatively prime to $N$, the finite \'etale cover
\[
 \mathcal{X}_B(N,m) \to \mathcal{X}_B(N)_{/\Z[1/m]}
\]
of Proposition \ref{prop:with levels} classifies  full level $m$ structures  on $E_0$, and there is a canonical isomorphism
\begin{equation}\label{modular iso}
 \mathcal{C}_L(m) \iso  \mathcal{X}_B(N,m) 
\end{equation}
under which the universal $\co_\kk$-abelian surface $A\to \mathcal{C}_L(m)$ on the left is identified with  the intermediate abelian surface $A_R=E_1 \times E_0$ of Remark \ref{rem:split intermediate}.

It follows from  calculations of K\"uhn and Kramer, see especially Remark 4.10 and Corollary 6.2 of \cite{kuhn}, that 
the metrized Hodge bundle of the universal elliptic curve $E_0 \to \mathcal{X}_B(N,m)$ lies in the subgroup
\begin{equation}\label{unitary log subgroup}
 \widehat{\Pic}(  \bar{\mathcal{X}}_B(N,m) , \mathscr{D}_\BKK )   \subset  \widehat{\Pic}(  \mathcal{X}_B(N,m) ) .
\end{equation}
Here $ \mathcal{X}_B(N,m) \hookrightarrow \bar{\mathcal{X}}_B(N,m)$ is the Deligne-Rapoport compactification, constructed as a moduli space of generalized elliptic curves with level structure.  Moreover,  
 \begin{equation}\label{kuhn degree}
  \deg_\C( \omega^\Hdg_{E_0 / \mathcal{X}_B(N,m)  })   
=    \frac{ d(m)  }{24 }  
\end{equation}
  and 
\begin{equation}\label{kuhn volume}
   \widehat{\vol} \big(  \widehat{\omega}^\Hdg_{E_0/  \mathcal{X}_B(N,m) }    \big)  
   =     
  \frac{ d(m) }{2}      \cdot 
 \left(  \frac{ \zeta(-1)}{2}  +     \zeta'(-1)    \right) \in \R_m
\end{equation}
where 
 \[
d(m) =   \varphi (m)   \cdot  [ \mathrm{SL}_2(\Z)   : \Gamma_0(N) \cap \Gamma(m) ] 
\]
is the degree\footnote{Our $d(m)$ is twice the integer $d$ appearing in Corollary 6.2 of \cite{kuhn}, because   K\"uhn's  $\Gamma_0(N)\cap \Gamma(m)$ is a subgroup of $\mathrm{PSL}_2(\Z)$, not  $\mathrm{SL}_2(\Z)$.
}  of the forgetful morphism from  $\mathcal{X}_B(N,m)$ to the moduli stack of all elliptic curves.

We have  constructed  $\bar{\mathcal{C}}_L(m)$ as  the normalization of a morphism $\mathcal{C}_L(m) \to \bar{\mathcal{A}}$ to  a toroidal compactification of the Siegel $3$-fold, and  must compare this compactification of $\mathcal{C}_L(m) \iso \mathcal{X}_B(N,m)$ with the compactification appearing in \eqref{unitary log subgroup}.

\begin{lemma}
The  isomorphism \eqref{modular iso} extends (necessarily uniquely) to an isomorphism
\[
\bar{\mathcal{C}}_L(m) \iso  \bar{\mathcal{X}}_B(N,m).
\]
\end{lemma}

\begin{proof}
It follows from the moduli interpretation of $\bar{\mathcal{X}}_B(N,m)$ that the elliptic curves $E_0$ and $E_1$ over $\mathcal{X}_B(N,m)$ extend to smooth group schemes over $\bar{\mathcal{X}}_B(N,m)$ with toric degeneration along the boundary.  In other words, they extend to semi-abelian schemes, and hence the same is true of the product $A_R=E_1\times E_0$.  Theorem 5.7(5) in Chapter IV of \cite{faltings-chai} therefore implies that the map
\[
\mathcal{X}_B(N,m) \to \bar{\mathcal{A}}
\]
defined by the polarized abelian surface $A_R \to \mathcal{X}_B(N,m)$ extends uniquely to a morphism
\[
\bar{\mathcal{X}}_B(N,m) \to \bar{\mathcal{A}}.
\]

By the universal property of normalization  \cite[\href{https://stacks.math.columbia.edu/tag/035I}{Tag 035I}]{stacks-project},
there is a unique arrow $i$ making the diagram 
\[
\xymatrix{
{  \mathcal{C}_L(m) } \ar[r] \ar[d] & {  \bar{\mathcal{X}}_B(N,m)  }  \ar[d] \\
{  \bar{\mathcal{C}}_L(m)}  \ar[r] \ar[ur]^{i} & { \bar{\mathcal{A}}}
}
\]
commute.
The diagonal arrow $i$ restricts to an isomorphism of generic fibers, because this restriction is a birational map between smooth proper curves.  
Moreover, the diagonal arrow is quasi-finite: the fiber over a closed point in the interior  $ \mathcal{X}_B(N,m)$ consists of a single point, while the fiber over a closed point  in characteristic $p$ of the boundary $\partial \bar{\mathcal{X}}_B(N,m)$ is contained in the mod $p$ fiber of  $\partial \bar{\mathcal{C}}_L(m)$, which is finite.  Being proper and quasi-finite, $i$ is  finite by  \cite[\href{https://stacks.math.columbia.edu/tag/02OG}{Tag 02OG}]{stacks-project}.

It follows that we may cover $ \bar{\mathcal{X}}_B(N,m) $ by open affines $U=\Spec(R)$ in such a way that $i^{-1}(U)= \Spec(R')$ is affine, and $R\to R'$ is a finite morphism of normal domains inducing an isomorphism on fraction fields.  Any such $R\to R'$ is an isomorphism, and hence $i$ is itself an isomorphism.
\end{proof}

The first isomorphism in \eqref{modular splitting} determines an isomorphism of hermitian line bundles
\[
 \widehat{\omega}^\Hdg_{A_0/ \mathcal{C}_L(m)}
 \iso 
 \widehat{\omega}^\Hdg_{E_0/ \mathcal{C}_L(m) }\otimes  \widehat{\omega}^\Hdg_{E_0/ \mathcal{C}_L(m)} ,
\]
while the commutativity of \eqref{q with level} implies 
\[
 \mathrm{deg}(\pi_m) = \frac{  |\co_\kk^\times| \cdot  d(m) }{  [ \mathrm{SL}_2(\Z) : \Gamma_0(N) ]  } .
\]
Thus from the formulas of K\"uhn and Kramer cited above, one obtains formulas for the complex degree and arithmetic volume of the metrized Hodge bundle of $A_0=E_0\times E_0 \to \mathcal{X}_B(N,m)$, exactly analogous to the formulas  of Kudla-Rapoport-Yang cited in the proof of Theorem \ref{thm:intermediate volume}.

With these formulas in hand, the  proofs of Theorem \ref{thm:intermediate volume} and Theorem \ref{thm:unitary volume} extend to the compactification of $\mathcal{X}_B(N,m) \iso \mathcal{C}_L(m)$ without change, and show that the metrized Hodge bundle of the universal $\co_\kk$-abelian surface $A \to \mathcal{C}_L(m)$, which is identified  with the metrized Hodge bundle of the intermediate abelian surface $A_R \to \mathcal{X}_B(N,m)$,   lies in the subgroup
\[
 \widehat{\Pic}(  \bar{\mathcal{C}}_L(m) , \mathscr{D}_\BKK )   \subset  \widehat{\Pic}(  \mathcal{C}_L(m) ) 
\]
  and satisfies
\[
  \deg_\C( \omega^\Hdg_{A / \mathcal{C}_L(m)  } )
=    \frac{  \mathrm{deg}(\pi_m)   }{12 \cdot  |\co_\kk^\times|  }  \cdot \prod_{ p\mid N} (1+p)
\]
 and 
\begin{align*}
\widehat{\mathrm{vol}}  (  \widehat{ \omega}^\Hdg_{A/ \mathcal{C}_L(m) }   ) 
   =  - \deg_\C(  \omega^\Hdg_{A/ \mathcal{C}_L(m)}   )  \cdot 
\left(
1+ \frac{2\zeta'(-1)}{\zeta(-1)} 
+  \sum_{ p\mid N} \frac{1-p}{1+p} \cdot  \frac{ \log(p)}{2}
\right),
\end{align*}
where the latter equality holds in  $\R_m= \R / \sum_{p\mid m}  \Q  \log(p)$.


To complete the proof of Theorem \ref{thm:compactified unitary volume}, one upgrades this to an equality in $\R$ by
choosing relatively prime integers $m_1 , m_2 \ge 3$, both prime to $N$.  The $\Q$-linear independence of $\{ \log(p) : p \mbox{ prime}\}$ implies that 
\[
\R = \R_{m_1} \times_{\R_{m_1m_2}} \R_{m_2} ,
\]
 and so
all of the stated properties of the metrized Hodge bundle of $A \to \mathcal{C}_L$ follow by combining  the corresponding properties of $A\to \mathcal{C}_L(m_i)$ proved above.
\end{proof}

\bibliographystyle{alpha.bst}

\begin{thebibliography}{BGKK07}

\bibitem[BC91]{boutot-carayol}
J.-F. Boutot and H.~Carayol.
\newblock Uniformisation {$p$}-adique des courbes de {S}himura: les
  th\'eor\`emes de \v {C}erednik et de {D}rinfel\cprime d.
\newblock {\em Ast\'erisque}, (196-197):7, 45--158 (1992), 1991.
\newblock Courbes modulaires et courbes de Shimura (Orsay, 1987/1988).


\bibitem[BBGK07]{BBK}
Jan~H. Bruinier, Jos\'{e}~I. Burgos~Gil, and Ulf K\"{u}hn.
\newblock Borcherds products and arithmetic intersection theory on {H}ilbert
  modular surfaces.
\newblock {\em Duke Math. J.}, 139(1):1--88, 2007.

\bibitem[BH]{BH}
Jan~H. Bruinier and Benjamin Howard.
\newblock Arithmetic volumes of unitary {S}himura varieties.
\newblock arXiv:2105.11274.


\bibitem[BGKK07]{BKK}
J.~I. Burgos~Gil, J.~Kramer, and U.~K\"{u}hn.
\newblock Cohomological arithmetic {C}how rings.
\newblock {\em J. Inst. Math. Jussieu}, 6(1):1--172, 2007.

\bibitem[Buz97]{buzzard}
Kevin Buzzard.
\newblock Integral models of certain {S}himura curves.
\newblock {\em Duke Math. J.}, 87(3):591--612, 1997.

\bibitem[FC90]{faltings-chai}
Gerd Faltings and Ching-Li Chai.
\newblock {\em Degeneration of abelian varieties}, volume~22 of {\em Ergebnisse
  der Mathematik und ihrer Grenzgebiete (3) [Results in Mathematics and Related
  Areas (3)]}.
\newblock Springer-Verlag, Berlin, 1990.
\newblock With an appendix by David Mumford.

\bibitem[Gro04]{gross04}
Benedict Gross.
\newblock Heegner points and representation theory.
\newblock In {\em Heegner points and Rankin $L$-series}, volume~49 of {\em
  Math. Sci. Res. Inst. Publ.}, pages 37--65. Cambridge Univ. Press, Cambridge,
  2004.

\bibitem[Har15]{hartwig}
Philipp Hartwig.
\newblock Kottwitz-{R}apoport and {$p$}-rank strata in the reduction of
  {S}himura varieties of {PEL} type.
\newblock {\em Ann. Inst. Fourier (Grenoble)}, 65(3):1031--1103, 2015.

\bibitem[Hid04]{hida}
Haruzo Hida.
\newblock {\em {$p$}-adic automorphic forms on {S}himura varieties}.
\newblock Springer Monographs in Mathematics. Springer-Verlag, New York, 2004.


\bibitem[How15]{howard-unitary-II}
Benjamin Howard.
\newblock Complex multiplication cycles and {K}udla-{R}apoport divisors {II}.
\newblock {\em Amer. J. Math.}, 137(3):639--698, 2015.


\bibitem[Jac62]{jacobowitz}
Ronald Jacobowitz.
\newblock Hermitian forms over local fields.
\newblock {\em Amer. J. Math.}, 84:441--465, 1962.

\bibitem[KM85]{katz-mazur}
Nicholas~M. Katz and Barry Mazur.
\newblock {\em Arithmetic moduli of elliptic curves}, volume 108 of {\em Annals
  of Mathematics Studies}.
\newblock Princeton University Press, Princeton, NJ, 1985.

\bibitem[KR14]{KRunitaryII}
Stephen Kudla and Michael Rapoport.
\newblock Special cycles on unitary {S}himura varieties {II}: {G}lobal theory.
\newblock {\em J. Reine Angew. Math.}, 697:91--157, 2014.

\bibitem[KRY06]{KRY}
Stephen~S. Kudla, Michael Rapoport, and Tonghai Yang.
\newblock {\em Modular forms and special cycles on {S}himura curves}, volume
  161 of {\em Annals of Mathematics Studies}.
\newblock Princeton University Press, Princeton, NJ, 2006.


\bibitem[Ku01]{kuhn}
Ulf K\"{u}hn.
\newblock Generalized arithmetic intersection numbers.
\newblock {\em J. Reine Angew. Math.}, 534:209--236, 2001.

\bibitem[Lan13]{lan}
Kai-Wen Lan.
\newblock {\em Arithmetic compactifications of {PEL}-type {S}himura varieties},
  volume~36 of {\em London Mathematical Society Monographs Series}.
\newblock Princeton University Press, Princeton, NJ, 2013.


\bibitem[Liu02]{liu}
Q.~Liu.
\newblock {\em Algebraic geometry and arithmetic curves}, volume~6 of {\em
  Oxford Graduate Texts in Mathematics}.
\newblock Oxford University Press, Oxford, 2002.
\newblock Translated from the French by Reinie Ern\'e, Oxford Science
  Publications.

\bibitem[MP19]{mp}
Keerthi Madapusi~Pera.
\newblock Toroidal compactifications of integral models of {S}himura varieties
  of {H}odge type.
\newblock {\em Ann. Sci. \'{E}c. Norm. Sup\'{e}r. (4)}, 52(2):393--514, 2019.

\bibitem[Mum08]{mumford-abelian}
David Mumford.
\newblock {\em Abelian varieties}, volume~5 of {\em Tata Institute of
  Fundamental Research Studies in Mathematics}.
\newblock Published for the Tata Institute of Fundamental Research, Bombay; by
  Hindustan Book Agency, New Delhi, 2008.
\newblock With appendices by C. P. Ramanujam and Yuri Manin, Corrected reprint  of the second (1974) edition.

\bibitem[Pap00]{pappas}
Georgios Pappas.
\newblock On the arithmetic moduli schemes of {PEL} {S}himura varieties.
\newblock {\em J. Algebraic Geom.}, 9(3):577--605, 2000.

\bibitem[Pin89]{pink}
Richard Pink.
\newblock {\em Arithmetical Compactification of Mixed {S}himura Varieties}.
\newblock PhD thesis, Bonn, 1989.

\bibitem[Sou92]{soule92}
C.~Soul{\'e}.
\newblock {\em Lectures on {A}rakelov geometry}, volume~33 of {\em Cambridge
  Studies in Advanced Mathematics}.
\newblock Cambridge University Press, Cambridge, 1992.
\newblock With the collaboration of D. Abramovich, J.-F.\ Burnol and J. Kramer.

\bibitem[{Sta}18]{stacks-project}
The {Stacks Project Authors}.
\newblock \textit{Stacks Project}.
\newblock \url{https://stacks.math.columbia.edu}, 2018.

\bibitem[Yu08]{yu}
Chia-Fu Yu.
\newblock An exact geometric mass formula.
\newblock {\em IMRN},  Art. ID rnn113, 2008.

\end{thebibliography}

\def\cprime{$'$}

\end{document}